 \newtheorem{thm}{Theorem}[section]
 \newtheorem{cor}[thm]{Corollary}
 \newtheorem{prop}[thm]{Proposition}
 \theoremstyle{definition}
 \newtheorem{defn}[thm]{Definition}
 \theoremstyle{remark}
 \newtheorem{rem}[thm]{Remark}
\theoremstyle{example}
 \numberwithin{equation}{section}
\begin{document}
\title{New $r$-Matrices for Lie Bialgebra Structures over Polynomials}
\author{IULIA POP}
\author{JULIA YERMOLOVA--MAGNUSSON}
\address{Department of Mathematical Sciences, University of Gothenburg, 
Sweden. Email: iulia@chalmers.se; md1jm@chalmers.se}

\begin{abstract}
For a finite dimensional simple complex Lie algebra $\mathfrak{g}$, Lie bialgebra structures on $\mathfrak{g}[[u]]$ and $\mathfrak{g}[u]$ were classified by
Montaner, Stolin and Zelmanov. In our paper, we provide an explicit algorithm to produce $r$-matrices which correspond to Lie bialgebra structures over polynomials.
 \end{abstract}
\keywords{$r$-matrix, Lie bialgebra, classical double, Lagrangian subalgebra, classical Yang--Baxter equation, modified Yang--Baxter equation}
\subjclass{Primary 17B37, 17B62; Secondary 17B81}
\maketitle
\section{Introduction}

In the recent paper \cite{SZ}, Montaner, Stolin and Zelmanov discussed the classification of Lie bialgebras and corresponding quantum groups over Taylor series and polynomials. We devote the introduction to giving an overview of the main results in \cite{SZ}. 

\subsection{Lie bialgebra structures over Taylor series}
Let $\mathfrak{g}$ denote a finite dimensional simple complex Lie algebra. Consider $\mathfrak{g}[[u]]$ the algebra of Taylor series over $\mathfrak{g}$. In order to classify Lie bialgebra structures on $L:=\mathfrak{g}[[u]]$, one needs some information on the corresponding Drinfeld double. As it is known (see \cite{D}), any Lie bialgebra structure $\mu$ on $L$ induces a Lie algebra structure on the space of restricted functionals $L^*$ on $L$. Moreover, the vector space $D_{\mu}(L):=L\oplus L^*$ can be equipped with a unique Lie algebra structure, which extends the brackets on $L$ and $L^*$ respectively, and is such that the natural nondegenerate form on $D_{\mu}(L)$ is invariant. The Lie algebra $D_{\mu}(L)$ is called the Drinfeld double corresponding to $\mu$. 

In  \cite{SZ} it was proved that $D_{\mu}(L)$ is either a trivial extension of $L$ or has one of the following forms:

I. The Lie algebra $\mathfrak{g}((u))$ of Laurent polynomials over 
$\mathfrak{g}$ with the nondegenerate symmetric bilinear form $Q_1$ defined by 
\[Q_1(xf_1(u),yf_2(u))=K(x,y)\cdot T_1(f_1(u)f_2(u)),\]
for any $x, y\in \mathfrak{g}$, $f_1(u),f_2(u)\in\mathbb{C}((u))$, where $K$ denotes the Killing form on 
$\mathfrak{g}$ and $T_1$ is given by $T_1(u^k)=0$ for all $k\geq 0$, $T_1(u^{-1})=1$ and $T_1(u^{-k-1})=a_k$, for all $k\geq 1$. 

II. The Lie algebra $\mathfrak{g}((u))\oplus \mathfrak{g}$ with the nondegenerate symmetric bilinear form $Q_2$ defined by 
\[Q_2(x_1f_1(u)+x_2,y_1f_2(u)+y_2)=K(x_1,y_1)\cdot T_2(f_1(u)f_2(u))-K(x_2,y_2),\]
for any $x_1, y_1, x_2, y_2 \in \mathfrak{g}$, $f_1(u),f_2(u)\in\mathbb{C}((u))$, and $T_2$ is given by 
$T_2(u^k)=0$ for all $k\geq 1$, $T_2(1)=1$ and $T_2(u^{-k})=a_k$, for all $k\geq 1$. 

III. The Lie algebra $\mathfrak{g}((u))\oplus (\mathfrak{g}+\varepsilon\mathfrak{g})$, where $\varepsilon^2=0$, with the nondegenerate symmetric bilinear form $Q_2$ defined by 
\[Q_3(x_1f_1(u)+x_2+\varepsilon x_3,y_1f_2(u)+y_2+\varepsilon y_3)=\]
\[K(x_1,y_1)\cdot T_3(f_1(u)f_2(u))-K(x_3,y_2)-K(x_2,y_3)-a\cdot K(x_2,y_2),\]
for any $x_1, y_1, x_2, y_2,x_3, y_3\in \mathfrak{g}$, $f_1(u),f_2(u)\in\mathbb{C}((u))$, where $T_3(u^k)=0$ for all $k\geq 2$, $T_3(u)=1$,  $T_3(1)=a$ and 
$T_3(u^{-k})=a_{k+1}$, for all $k\geq 1$. 

In each of the above cases, consider

\[a(u):=1+\sum_{k=1}^{\infty}a_ku^k.\]

Let $\mathrm{Aut}_0(\mathbb{C}[[u]])$ be the group of infinite series 
$\gamma=x+\gamma_2x^2+\gamma_3x^3+...$, with respect to substitution. 
In \cite{SZ} it was shown that up to an automorphism $\gamma\in\mathrm{Aut}_0(\mathbb{C}[[u]])$, one may suppose that $a(u)=1$.

\begin{thm}
\cite{SZ} Let $\mu$ be any Lie bialgebra structure on $\mathfrak{g}[[u]]$. 
Then there exists $\gamma\in\mathrm{Aut}_0(\mathbb{C}[[u]])$ such that 
$D_{\mu}(\mathfrak{g}[[u]])$ is isomorphic, via $\gamma$, to one of the Lie algebras in cases I--III with $a(u)=1$. 
\end{thm}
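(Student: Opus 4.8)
The plan is to take the classification of the double into the four types (trivial extension and cases I--III) as already established, and to concentrate on the normalization $a(u)=1$, which is the substantive part of the statement. The mechanism is the action of $\mathrm{Aut}_0(\mathbb{C}[[u]])$: an element $\gamma$, being a substitution $u\mapsto\gamma(u)=u+\gamma_2u^2+\cdots$, is an automorphism of the ring $\mathbb{C}((u))$ that preserves $\mathbb{C}[[u]]$, so $\mathrm{id}_{\mathfrak{g}}\otimes\gamma$ is a Lie algebra automorphism of $\mathfrak{g}((u))$ preserving the Lagrangian $\mathfrak{g}[[u]]=L$ and thus realizing an isomorphism of Drinfeld doubles carrying $L$ to $L$. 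Hence it suffices to compute how the invariant form, equivalently the functional $T_i$, is transported by $\gamma$, and to show that the resulting parameter series can be reduced to $1$.

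First I would reinterpret each $T_i$ as a residue pairing. Matching the prescribed values gives
\[T_i(f)=\mathrm{Res}_{u=0}\bigl(f(u)\,a(u)\,\omega_i\bigr),\qquad \omega_1=du,\quad \omega_2=\frac{du}{u},\quad \omega_3=\frac{du}{u^2},\]
where $a(u)=\sum_{k\ge0}a_ku^k$ with $a_0=1$ (and $a=a_1$ in case III). Because the residue of a $1$-form is invariant under a formal change of variable, the substitution $u=\gamma(v)$ sends $a(u)\,\omega_i$ to $\tilde a(v)\,\omega_i$ with
\[\tilde a(v)\,v^{\,1-i}=a\bigl(\gamma(v)\bigr)\,\gamma(v)^{\,1-i}\,\gamma'(v),\]
so the normalization amounts to choosing $\gamma$ with $\tilde a\equiv 1$.

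In case I this reads $a(\gamma)\,\gamma'=1$, that is $\tfrac{d}{dv}A(\gamma(v))=1$ for $A(u)=\int_0^u a$; since $A'(0)=a_0=1$, the compositional inverse $\gamma=A^{-1}$ exists and lies in $\mathrm{Aut}_0(\mathbb{C}[[u]])$, yielding $a(u)=1$. Case II is entirely analogous with $d\log$ in place of $d$: one solves $a(\gamma)\,d\log\gamma=d\log v$ by a fixed-point recursion for the coefficients $\gamma_k$, again nonsingular because $a_0=1$.

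The main obstacle is case III. Here the residue of the defining $1$-form $a(u)\,\omega_3=a(u)u^{-2}\,du$ is its coefficient of $u^{-1}$, namely the scalar $a$, and this residue is a substitution invariant; consequently $a$ cannot be removed by the Laurent substitution $\gamma$ alone. The normalization must therefore use the coupling of $a$ with the dual-number summand $\mathfrak{g}+\varepsilon\mathfrak{g}$ recorded by the term $-a\,K(x_2,y_2)$ in $Q_3$, treating this scalar separately via the automorphisms of the full double that mix $\mathfrak{g}((u))$ with the extension, and only then running the residue recursion above on the remaining coefficients $a_2,a_3,\dots$. Verifying that these operations are compatible---that the induced map on the whole double is still of type III and that the recursion closes up---is the delicate point, with cases I and II serving as the guiding model.
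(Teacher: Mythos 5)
First, a caveat: this theorem is quoted from \cite{SZ} (listed as ``to appear''), and the present paper contains no proof of it, so there is no in-paper argument to compare yours against; I can only assess your proposal on its own terms.

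Your framework --- transporting the invariant form through $\mathrm{id}_{\mathfrak{g}}\otimes\gamma$ and reading each $T_i$ as a residue density --- is the natural one, and your treatment of cases I and II is correct. The identifications $T_1(f)=\mathrm{Res}_{u=0}(fa\,du)$, $T_2(f)=\mathrm{Res}_{u=0}(fa\,u^{-1}du)$, $T_3(f)=\mathrm{Res}_{u=0}(fa\,u^{-2}du)$ agree with the forms $Q_{a(u)}$ used in Sections 2--4 of the paper, the transformation rule is right, $\gamma=A^{-1}$ with $A'=a$, $A(0)=0$ settles case I in closed form, and in case II the coefficient equation $u\,a(\gamma(u))\gamma'(u)=\gamma(u)$ gives at order $u^{k+1}$ a relation $k\gamma_{k+1}=(\hbox{polynomial in }\gamma_2,\dots,\gamma_k,a_1,\dots,a_k)$, which is solvable since $k\geq 1$.

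The genuine gap is case III, and it is worse than ``delicate'': the mechanism you announce there cannot exist. Your observation that the scalar $a=a_1=\mathrm{Res}_{u=0}(a(u)u^{-2}du)=T_3(1)$ is a substitution invariant is correct and important, but your proposed remedy --- automorphisms of the full double ``mixing'' $\mathfrak{g}((u))$ with $\mathfrak{g}+\varepsilon\mathfrak{g}$ --- is ruled out by the Lie algebra structure itself. In $D=\mathfrak{g}((u))\oplus(\mathfrak{g}+\varepsilon\mathfrak{g})$ every ideal either contains the simple infinite-dimensional summand $\mathfrak{g}((u))$ or lies inside the finite-dimensional one, so any automorphism $\Phi$ satisfies $\Phi(\mathfrak{g}((u)))=\mathfrak{g}((u))$; and for $z$ in the second summand, $[\Phi(z),\mathfrak{g}((u))]=0$ forces the $\mathfrak{g}((u))$-component of $\Phi(z)$ into the center of $\mathfrak{g}((u))$, which is zero. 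Hence every automorphism is diagonal, $\Phi=\alpha\oplus\psi$. Moreover $\alpha$ is semilinear over some algebra automorphism $\sigma$ of $\mathbb{C}((u))$ and $K(\alpha x,\alpha y)=\sigma(K(x,y))$, so the pulled-back functional is $T_3\circ\sigma$ and $(T_3\circ\sigma)(1)=T_3(1)=a$: the scalar $a$ is invariant under \emph{all} isomorphisms of such doubles, not merely under substitutions. Consequently no argument of the form-transport type --- yours or any strengthening of it --- can turn a case III form with $a\neq 0$ into the form with $a(u)=1$. To prove the theorem as stated one must show that $a=0$ holds automatically for doubles that actually arise from Lie bialgebra structures on $\mathfrak{g}[[u]]$, which requires input your proposal never uses (for instance, the existence of the transversal Lagrangian subalgebra $L^*$ inside the double); alternatively the case III normal form must be weakened to $a(u)=1+au$, which is what your recursion does yield, since the logarithmic obstruction to integrating $a(u)u^{-2}du$ concerns only the coefficient $a_1$. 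As written, your proposal establishes neither, so case III remains unproved.
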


\subsection{Lie bialgebra structures over polynomials}
Let us now focus on the classification of Lie bialgebra structures on $\mathfrak{g}[u]$. Denote by  
$\delta$ any such structure. 
It was shown in \cite{SZ} that $\delta$ can be naturally extended to a Lie bialgebra structure $\bar{\delta}$ on $\mathfrak{g}[[u]]$, by letting 
\[\bar{\delta}(\sum_{n=0}^{\infty}x_nu^n)=\sum_{n=0}^{\infty}\delta(x_nu^n).\] 
The corresponding Drinfeld double $D_{\bar{\delta}}(\mathfrak{g}[[u]])$ is either a trivial extension of $\mathfrak{g}[[u]]$ or is isomorphic to one of the Lie algebras in cases I--III. 
We note that $a(u)$ is not necessarily constant. An automorphism of $\mathbb{C}[[u]]$ is not necessarily an automorphism of $\mathbb{C}[u]$. Moreover, there are certain restrictions that have to be imposed on $a(u)$, depending on the case.

\textbf{Case I.} 
Let us suppose that $D_{\bar{\delta}}(L)=\mathfrak{g}((u))$.
This implies that there exists a  Lagrangian subalgebra $W$ of 
$\mathfrak{g}((u))$, with bracket induced by  $\bar{\delta}$, such that 
$W\oplus L=\mathfrak{g}((u))$. It was shown that the following properties hold:

(i) $W$ is bounded, i.e., there exists a natural number $n$ such that $W\subseteq u^n\mathfrak{g}[u^{-1}]$.

(ii) $W\cdot\mathbb{C}[[u^{-1}]]$ is an order in $\mathfrak{g}((u^{-1}))$. Here we recall that an order in $\mathfrak{g}((u^{-1}))$ is a Lie subalgebra $V$ for which there exist natural numbers $k$ and $n$ such that $u^{-n}\mathfrak{g}[[u^{-1}]]\subseteq V \subseteq u^k\mathfrak{g}[[u^{-1}]]$.

(iii) Using the general theory of orders \cite{S2}, the authors show that, by means of a gauge transformation $\sigma(u)\in \mathrm{Aut}_{\mathbb{C}[u]}(\mathfrak{g}[u])$, one can embed $W\cdot\mathbb{C}[[u^{-1}]]$ into a special order denoted by $\mathbb{O}_\alpha$, constructed in the following way: 
 
Let $\mathfrak{h}$ be a Cartan subalgebra of $\mathfrak{g}$ with
 the corresponding set of roots $R$ and a choice of simple
 roots $\Gamma$. Denote by $\mathfrak{g}_{\alpha}$ the root space
 corresponding to a root $\alpha$. Let $\mathfrak{h}(\mathbb{R})$
 be the set of all $h\in\mathfrak{h}$ such that
 $\alpha(h)\in\mathbb{R}$ for all $\alpha\in R$.
 Consider the valuation on $\mathbb{C}((u^{-1}))$ defined by
 $v(\sum_{k\geq n}a_{k}u^{-k})=n$. For any root $\alpha$ and any
 $h\in\mathfrak{h}(\mathbb{R})$, set
 $M_{\alpha}(h)$:=$\{f\in\mathbb{C}((u^{-1})):v(f)\geq \alpha(h)\}$.
 Consider
 \[\mathbb{O}_{h}:=\mathfrak{h}[[u^{-1}]]\oplus(\oplus_{\alpha\in R}M_{\alpha}(h)\otimes\mathfrak{g}_{\alpha}).\]

Vertices of the above simplex correspond to vertices of the extended Dynkin diagram of
$\mathfrak{g}$, the correspondence being given by the following rule:
\[0\leftrightarrow\ -\alpha_{\max}\]\[h_{i}\leftrightarrow\alpha_{i}\]
where $\alpha_{i}(h_{j})=\delta_{ij}/k_{j}$ and $k_{j}$ are given by the relation $\sum
k_{j}\alpha_{j}=\alpha_{\max}$. One writes $\mathbb{O}_{\alpha}$ instead of
$\mathbb{O}_{h}$ if $\alpha$ is the root which corresponds to the vertex $h$ 
and $\mathbb{O}_{-\alpha_{\max}}$ instead of $\mathbb{O}_0$.

Then there exists $\sigma(u)\in \mathrm{Aut}_{\mathbb{C}[u]}(\mathfrak{g}[u])$ 
such that $\sigma(u)(W\cdot\mathbb{C}[[u^{-1}]])\subseteq \mathbb{O}_{\alpha}$, where $\alpha$ is either a simple root or $-\alpha_{\max}$. Here one makes the remark that for any $\sigma\in\mathrm{Aut}_{\mathbb{C}[u]}(\mathfrak{g}[u])$, there exists a natural embedding
$$\mathrm{Aut}_{\mathbb{C}[u]}(\mathfrak{g}[u])\hookrightarrow\mathrm{Aut}_{\mathbb{C}((u^{-1}))}(\mathfrak{g}((u^{-1}))),$$
defined by the formula
$\sigma(u^{-k}x)=u^{-k}\sigma(x)$, for any $x\in\mathfrak{g}[u]$; hence one can regard $\sigma(u)$ as acting on $\mathfrak{g}((u^{-1}))$.

\begin{thm}\cite{SZ}\label{condCaseI}
Let $\alpha_{\max}=\sum  k_j\alpha_j$, $\alpha_j\in\Gamma$ and  
$\sigma\in\mathrm{Aut}_{\mathbb{C}[u]}(\mathfrak{g}[u])$.

(1) Assume that $\sigma(u)(W\cdot\mathbb{C}[[u^{-1}]])\subseteq \mathbb{O}_{-\alpha_{\max}}$. Then $\frac{1}{a(u)}$ is a polynomial of degree at most 2. 

(2) Assume that 
$\sigma(u)(W\cdot\mathbb{C}[[u^{-1}]])\subseteq \mathbb{O}_{\alpha_i}$, for some $i$. Then $\frac{1}{a(u)}$ is a polynomial of degree at most 2 if $k_i=1$, and 
$\frac{1}{a(u)}$ is a polynomial of degree at most 1 if $k_i>1$.

\end{thm}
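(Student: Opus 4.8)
The plan is to recast $Q_1$ as a residue form twisted by $a(u)$, to translate the Lagrangian condition on $W$ into a single containment of $\mathbb{C}[[u^{-1}]]$-lattices, and to read off the bound on $\deg(1/a)$ from that containment. First I would note that $T_1(f)$ is exactly the coefficient of $u^{-1}$ in $a(u)f$, so that
\[Q_1(xf,yg)=K(x,y)\,\mathrm{res}_{u=0}\bigl(a(u)fg\bigr),\]
where $\mathrm{res}_{u=0}$ extracts that coefficient. Set $q(u):=1/a(u)$, a power series with $q(0)=1$ since $a(0)=1$, and let $B$ be the untwisted form $B(xf,yg)=K(x,y)\,\mathrm{res}_{u=0}(fg)$. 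Then $Q_1(\xi,\eta)=B(\xi,a(u)\eta)$, so for any lattice $\Lambda$ the dual with respect to $Q_1$ is $\Lambda^{*_{Q_1}}=q(u)\,\Lambda^{*_B}$.

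Next I would exploit that $\mathfrak{g}[u]$ is Lagrangian for $Q_1$ and that $W$ is a complementary Lagrangian, which should force the order $M:=W\cdot\mathbb{C}[[u^{-1}]]$ to be self-dual, $M^{*_{Q_1}}=M$. Granting this, the embedding $M\subseteq\mathbb{O}_{\alpha}$ of property (iii) gives $\mathbb{O}_{\alpha}^{*_{Q_1}}\subseteq M^{*_{Q_1}}=M\subseteq\mathbb{O}_{\alpha}$, i.e.
\[q(u)\,\mathbb{O}_{\alpha}^{*_B}\subseteq\mathbb{O}_{\alpha}.\]
To compute $\mathbb{O}_{h}^{*_B}$ I would use that $B$ pairs $\mathfrak{h}$ with itself and $\mathfrak{g}_\alpha$ with $\mathfrak{g}_{-\alpha}$, while under the residue pairing the dual of $\{f:v(f)\ge n\}$ is $\{f:v(f)\ge 2-n\}$. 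Since the $\mathfrak{g}_{-\alpha}$-component of $\mathbb{O}_h$ is $M_{-\alpha}(h)=\{v\ge -\lfloor\alpha(h)\rfloor\}$, this yields
\[\mathbb{O}_{h}^{*_B}=\{v\ge 2\}\,\mathfrak{h}\ \oplus\ \bigoplus_{\alpha\in R}\{v\ge 2+\lfloor\alpha(h)\rfloor\}\,\mathfrak{g}_\alpha.\]

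Now the containment $q(u)\,\mathbb{O}_{h}^{*_B}\subseteq\mathbb{O}_{h}$ does two things at once. Because multiplication by the generator $u^{-2}h$ of the Cartan part of $\mathbb{O}_h^{*_B}$ produces the term $q_j u^{\,j-2}h$ for every $j$ with $q_j\neq0$, and the Cartan part of $\mathbb{O}_h$ contains no positive powers of $u$, one is forced to have $q_j=0$ for $j\ge 3$; thus $q=1/a$ is genuinely a polynomial, of degree at most $2$ from the Cartan part alone. The root components refine this: with $v(q)=-\deg q$, the inclusion in degree $\alpha$ reads $2+\lfloor\alpha(h)\rfloor-\deg q\ge\lceil\alpha(h)\rceil$, i.e. $\deg q\le 2-(\lceil\alpha(h)\rceil-\lfloor\alpha(h)\rfloor)$, which is $2$ if $\alpha(h)\in\mathbb{Z}$ and $1$ otherwise. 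For $h=0$ (the vertex $-\alpha_{\max}$) every $\alpha(0)=0\in\mathbb{Z}$, giving $\deg q\le 2$, which is (1). For $h=h_i$ (the vertex $\alpha_i$) and a root $\alpha=\sum_j m_j\alpha_j$ one has $\alpha(h_i)=m_i/k_i$; if $k_i=1$ all these are integers and $\deg q\le 2$, whereas if $k_i>1$ the simple root $\alpha_i$ gives $\alpha_i(h_i)=1/k_i\notin\mathbb{Z}$ and forces $\deg q\le 1$, which is (2).

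The main obstacle is the self-duality claim $M^{*_{Q_1}}=M$: it is the step that reconciles the form $Q_1$, which naturally lives on $\mathfrak{g}((u))$, with the order-theoretic picture in $\mathfrak{g}((u^{-1}))$, across the two completions and the $a(u)$-twist. I would establish it by working first over the Laurent polynomials $\mathfrak{g}[u,u^{-1}]=\mathfrak{g}((u))\cap\mathfrak{g}((u^{-1}))$, where every residue is a finite computation, using that $W\oplus\mathfrak{g}[u]=\mathfrak{g}[u,u^{-1}]$ exhibits $W$ as the graph of a map over $u^{-1}\mathfrak{g}[u^{-1}]$, and then passing to the completion via property (ii), which guarantees that $M$ is a genuine order so that its dual is again a lattice. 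This last point is exactly what rules out a non-polynomial power series $1/a$, so the order hypothesis is indispensable rather than cosmetic.
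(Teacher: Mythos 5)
First, a caveat: the paper never proves this theorem---it is quoted from \cite{SZ}---so your proposal can only be checked against correctness and against the duality technique the paper itself uses in Section 2. Your reformulation of $T_1$ as the twisted residue form, your formula for the $B$-dual of $\mathbb{O}_h$, and the final floor/ceiling arithmetic are all correct and do give the stated bounds. The genuine gap is the step you yourself single out as the main obstacle: the self-duality $M^{*_{Q_1}}=M$ for $M=W\cdot\mathbb{C}[[u^{-1}]]$. This claim fails on two counts. It is not even well defined: for an arbitrary $a(u)\in\mathbb{C}[[u]]$---and whether $1/a$ is a polynomial is exactly what is being decided---neither $Q_1$ nor multiplication by $q=1/a$ extends to $\mathfrak{g}((u^{-1}))$, since $\mathrm{res}_{u=0}(a(u)fg)$ with $f,g\in\mathbb{C}((u^{-1}))$ is an infinite sum of complex numbers; "passing to the completion" is precisely where the pairing stops making sense, so the two-step repair you sketch cannot close this. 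Worse, in the one case where the claim is well defined, $a(u)=1$, it is false: take $\mathfrak{g}=\mathfrak{sl}_2$ and $W=\{\rho(\xi)+u^{-1}\xi:\xi\in\mathfrak{g}\}+u^{-2}\mathfrak{g}[u^{-1}]$, the Lagrangian subalgebra attached to the triangular solution $e\wedge h$, where $\rho(\xi)=eK(h,\xi)-hK(e,\xi)$. Then $u^{-2}\mathfrak{g}[[u^{-1}]]\subseteq M\subseteq\mathfrak{g}[[u^{-1}]]$, the image of $M$ in $\mathfrak{g}[[u^{-1}]]/u^{-2}\mathfrak{g}[[u^{-1}]]\cong\mathfrak{g}+\varepsilon\mathfrak{g}$ is $\bar{W}+\varepsilon\,\mathrm{Im}(\rho)$, of dimension $4$, while the image of the dual lattice is its $\bar{Q}_\varepsilon$-orthogonal, of dimension $2$. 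So $M$ strictly contains its dual: such orders are "integral", not self-dual.

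Fortunately self-duality is also unnecessary: your argument only needs an inclusion, and you get it by never leaving $\mathfrak{g}((u))$, where $Q_{a(u)}$ is defined. Boundedness gives $W\subseteq\mathfrak{g}[u,u^{-1}]$, so the hypothesis yields $W\subseteq\mathbb{O}':=\mathbb{O}_{h}\cap\mathfrak{g}[u,u^{-1}]$ (this is exactly Proposition \ref{max_ord}; one may assume $\sigma=\mathrm{id}$ since $\sigma$ preserves the $\mathbb{C}((u))$-valued Killing form, hence $Q_{a(u)}$, as well as $\mathfrak{g}[[u]]$ and boundedness). Now take perps with respect to $Q_{a(u)}$ inside $\mathfrak{g}((u))$: since $W$ is Lagrangian, $(\mathbb{O}')^{\perp}\subseteq W^{\perp}=W\subseteq\mathbb{O}'$. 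The perp is computed componentwise exactly as in your dual computation, except that now every element is an honest Laurent series in $u$:
\[(\mathbb{O}')^{\perp}=q(u)\Bigl(u^{-2}\mathfrak{h}[u^{-1}]\oplus\bigoplus_{\alpha\in R}u^{-\lfloor\alpha(h)\rfloor-2}\,\mathbb{C}[u^{-1}]\otimes\mathfrak{g}_{\alpha}\Bigr).\]
The Cartan component of the containment $(\mathbb{O}')^{\perp}\subseteq\mathbb{O}'$ forces $q\in\mathbb{C}[[u]]\cap u^{2}\mathbb{C}[u^{-1}]$, i.e. $1/a$ is a polynomial of degree at most $2$, and the root components give your bound $\deg q\le 2-\bigl(\lceil\alpha(h)\rceil-\lfloor\alpha(h)\rfloor\bigr)$; evaluating at $h=0$, and at $h=h_i$ with $\alpha_i(h_i)=1/k_i$, produces parts (1) and (2). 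This corrected route is precisely the pattern of Section 2 of the paper, where for each admissible $a(u)$ one computes $\mathfrak{g}[u^{-1}]^{\perp}=q(u)u^{-2}\mathfrak{g}[u^{-1}]$ and exploits $\mathfrak{g}[u^{-1}]^{\perp}\subseteq W\subseteq\mathfrak{g}[u^{-1}]$; the theorem is that computation run in reverse.
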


\vspace*{0.5cm}

\textbf{Case II.} 
Let us suppose $\bar{\delta}$ satisfies the condition 
$D_{\bar{\delta}}(L)=\mathfrak{g}((u))\oplus \mathfrak{g}$ and $W$ is the corresponding Lagrangian subalgebra in the double transversal to $L$. We first note that any $\sigma(u)\in \mathrm{Aut}_{\mathbb{C}[u]}(\mathfrak{g}[u])$ induces an automorphism $\tilde{\sigma}(u)$ of $\mathfrak{g}((u))\oplus \mathfrak{g}$ defined by 
$\tilde{\sigma}(u)=\sigma(u)\oplus \sigma(0)$.
Then, similarly to Case I, one can show that there exists $\sigma(u)\in \mathrm{Aut}_{\mathbb{C}[u]}(\mathfrak{g}[u])$ 
satisfying $\tilde{\sigma}(u)(W\cdot\mathbb{C}[[u^{-1}]])\subseteq \mathbb{O}_{\alpha}\oplus\mathfrak{g}$, where $\alpha$ is either a simple root or $-\alpha_{\max}$. Moreover the following result holds: 

\begin{thm} \cite{SZ}\label{condCaseII}
 Let $\alpha_{\max}=\sum  k_j\alpha_j$, $\alpha_j\in\Gamma$ and  
$\sigma\in\mathrm{Aut}_{\mathbb{C}[u]}(\mathfrak{g}[u])$.

(1) Assume that $\tilde{\sigma}(u)(W\cdot\mathbb{C}[[u^{-1}]])\subseteq \mathbb{O}_{-\alpha_{\max}}\oplus\mathfrak{g}$. Then $\frac{1}{a(u)}$ is a polynomial of degree at most 1. 

(2) Assume that 
$\tilde{\sigma}(u)(W\cdot\mathbb{C}[[u^{-1}]])\subseteq \mathbb{O}_{\alpha_i}\oplus\mathfrak{g}$, for some $i$. Then $\frac{1}{a(u)}$ is a polynomial of degree at most 1, if $k_i=1$ and 
$\frac{1}{a(u)}$ is a constant if $k_i>1$.
\end{thm}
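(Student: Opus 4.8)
The plan is to mirror the proof of Theorem~\ref{condCaseI}, tracking the two features that distinguish Case~II: the functional $T_2$ in place of $T_1$, and the anisotropic summand $\mathfrak{g}$ carrying the form $-K$. First I would use the gauge transformation to normalize the position of the Lagrangian. Since $\sigma(u)$ and $\sigma(0)$ take values in $\mathrm{Aut}(\mathfrak{g})$, they preserve the Killing form $K$ pointwise, so $\tilde{\sigma}(u)=\sigma(u)\oplus\sigma(0)$ is an isometry of $Q_2$; moreover $\sigma\in\mathrm{Aut}_{\mathbb{C}[u]}(\mathfrak{g}[u])$ fixes $L$. Hence, after replacing $W$ by $\tilde{\sigma}(u)(W)$, I may assume outright that $V:=W\cdot\mathbb{C}[[u^{-1}]]\subseteq\mathbb{O}_{\alpha}\oplus\mathfrak{g}$ while $W$ remains Lagrangian with respect to $Q_2$, where $\alpha$ is $-\alpha_{\max}$ or a simple root $\alpha_i$.

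The decisive observation is that $T_2(g)=T_1(u^{-1}g)$; writing $b(u):=1/a(u)=1+\sum_{k\ge 1}b_ku^k\in\mathbb{C}[[u]]$, the series part of $Q_2$ becomes the Case~I residue pairing composed with multiplication by $u^{-1}$. Consequently, for any $\mathbb{C}[[u^{-1}]]$-lattice $M$ in $\mathfrak{g}((u^{-1}))$,
\[
(M\oplus\mathfrak{g})^{\perp_{Q_2}}=u\,b(u)\,M^{\vee}\oplus 0,
\]
where $M^{\vee}$ denotes the dual lattice for the residue-at-$u=0$ form $[u^{-1}]K(\cdot,\cdot)$ and the vanishing second summand reflects that $-K$ is nondegenerate on $\mathfrak{g}$. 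Two things follow. First, the identity $W=W^{\perp_{Q_2}}$ exhibits $W$ as order-commensurable, and the complement formula shows its validity requires the multiplier $u\,b(u)$ to lie in $\mathbb{C}((u^{-1}))$; since $b$ has no negative powers of $u$, this forces $1/a$ to be a \emph{polynomial}. Second, from $V\subseteq\mathbb{O}_{\alpha}\oplus\mathfrak{g}$ together with $W=W^{\perp_{Q_2}}$ I obtain
\[
u\,b(u)\,\mathbb{O}_{\alpha}^{\vee}=(\mathbb{O}_{\alpha}\oplus\mathfrak{g})^{\perp_{Q_2}}\subseteq V^{\perp_{Q_2}}\subseteq W^{\perp_{Q_2}}=W\subseteq V\subseteq\mathbb{O}_{\alpha}\oplus\mathfrak{g},
\]
so that $u\,b(u)\,\mathbb{O}_{\alpha}^{\vee}\subseteq\mathbb{O}_{\alpha}$.

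It then remains to convert this single lattice inclusion into the stated degree bound, which I would do root space by root space. For $\alpha=\sum_j m_j\alpha_j$ one has $\alpha(h_i)=m_i/k_i$, so the $\mathfrak{g}_{\alpha}$-component of $\mathbb{O}_{\alpha_i}$ is $u^{-\lceil m_i/k_i\rceil}\mathbb{C}[[u^{-1}]]$; a direct computation of the dual for $[u^{-1}]K$ (where the residue sits at $u=0$, producing a shift by $2$) gives the $\mathfrak{g}_{\alpha}$-component of $\mathbb{O}_{\alpha_i}^{\vee}$ as $u^{-2-\lfloor m_i/k_i\rfloor}\mathbb{C}[[u^{-1}]]$. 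Writing $d:=\deg b$ and comparing lowest powers of $u^{-1}$ in $u\,b\,\mathbb{O}_{\alpha_i}^{\vee}\subseteq\mathbb{O}_{\alpha_i}$ yields, for every root, $d\le 1+\lfloor m_i/k_i\rfloor-\lceil m_i/k_i\rceil$, that is $d\le 1$ when $k_i\mid m_i$ and $d\le 0$ otherwise. If $k_i=1$ every $m_i$ is divisible by $k_i$ and we get $d\le 1$; if $k_i>1$ the simple root $\alpha_i$ itself has $m_i=1$ with $k_i\nmid 1$, so the sharper bound $d\le 0$ applies and $1/a(u)$ is constant. The vertex $-\alpha_{\max}$ corresponds to $h=0$, where every component equals $\mathbb{C}[[u^{-1}]]$ and the same comparison gives $d\le 1$. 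This reproduces all three assertions, the uniform drop by one relative to Theorem~\ref{condCaseI} being precisely the extra factor $u$ coming from $T_2=T_1(u^{-1}\,\cdot\,)$.

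I expect the main obstacle to be the bookkeeping that legitimizes the displayed complement chain: $W$ consists of Laurent polynomials and is Lagrangian inside the double $\mathfrak{g}((u))\oplus\mathfrak{g}$, whereas its completion $V$ lives in $\mathfrak{g}((u^{-1}))\oplus\mathfrak{g}$, so identifying $W^{\perp_{Q_2}}$ with the lattice-theoretic dual requires passing between the two completions exactly as in the proof of Theorem~\ref{condCaseI} and invoking the order theory of \cite{S2}. Beyond Case~I, the only genuinely new verifications are that the anisotropic summand $\mathfrak{g}$ contributes nothing to the complement (its $-K$ being nondegenerate) and that the fractional valuations $m_i/k_i$ interact with the ceiling and floor correctly at the vertices with $k_i>1$; both are settled by the root space computation above.
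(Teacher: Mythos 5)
Before anything else, a point of order: this paper never proves Theorem~\ref{condCaseII}. It is imported verbatim from \cite{SZ} (as are Theorems~\ref{condCaseI} and \ref{condCaseIII}), and the paper only proves results downstream of it, such as Proposition~\ref{max_ord} and the explicit $r$-matrices. So your proposal cannot be compared with an in-paper proof and has to be judged on its own merits. On those merits, much of it is right: $\tilde{\sigma}(u)=\sigma(u)\oplus\sigma(0)$ is indeed an isometry of $Q_2$ preserving the embedded copy of $\mathfrak{g}[[u]]$, the identity $T_2(g)=T_1(u^{-1}g)$ is correct and is the true source of the degree drop by one relative to Case~I, and your valuation bookkeeping (the component $u^{-\lceil m_i/k_i\rceil}\mathbb{C}[[u^{-1}]]$ of $\mathbb{O}_{\alpha_i}$, the shift by $2$ in the dual, the bound $d\le 1+\lfloor m_i/k_i\rfloor-\lceil m_i/k_i\rceil$, and its specialization at $\beta=\alpha_i$ when $k_i>1$) is correct and reproduces all three assertions.

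The genuine gap is the step where you conclude that $1/a(u)$ is a polynomial. Your complement formula $(M\oplus\mathfrak{g})^{\perp_{Q_2}}=u\,b(u)\,M^{\vee}\oplus 0$ presupposes that multiplication by $b(u)=1/a(u)$ makes sense on lattices in $\mathfrak{g}((u^{-1}))$, i.e.\ that $b$ is already a Laurent polynomial (or at least rational); for a general power series $a(u)\in\mathbb{C}[[u]]$ the product of $b(u)$ with an element of $\mathbb{C}((u^{-1}))$ involves divergent coefficient sums, so the right-hand side is meaningless. Observing that ``the validity of the formula requires $u\,b(u)\in\mathbb{C}((u^{-1}))$'' is not a proof that this requirement is met: the alternative is simply that the complement is not of that shape. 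And indeed, if you compute the complement where your pairing is actually defined --- against Laurent polynomials --- you find that for a non-rational $a(u)$ it is $\{0\}$, because a nonzero Laurent polynomial $f$ with $a(u)f\in u^{-1}\mathbb{C}[u^{-1}]$ componentwise would force $a$ to be rational. In that case your inclusion chain reads $\{0\}\subseteq W\subseteq\mathbb{O}_{\alpha}\oplus\mathfrak{g}$ and yields no constraint on $a(u)$ whatsoever; the theorem's main content (ruling out non-rational $a$) is exactly what remains unproved.

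The gap is repairable while keeping your strategy, by taking orthogonal complements inside the double $\mathfrak{g}((u))\oplus\mathfrak{g}$, where they are well-defined for \emph{every} $a(u)\in\mathbb{C}[[u]]$ because elements of $\mathfrak{g}((u))$ are allowed infinite tails in positive powers of $u$. For instance, at the vertex $-\alpha_{\max}$ one computes directly $(\mathfrak{g}[u^{-1}]\oplus\mathfrak{g})^{\perp_{Q_2}}=u^{-1}b(u)\,\mathfrak{g}[u^{-1}]$ as a subspace of $\mathfrak{g}((u))$. From the hypothesis, $W\subseteq(\mathbb{O}_{-\alpha_{\max}}\oplus\mathfrak{g})\cap(\mathfrak{g}((u))\oplus\mathfrak{g})=\mathfrak{g}[u^{-1}]\oplus\mathfrak{g}$, so Lagrangianity gives $u^{-1}b(u)\,\mathfrak{g}[u^{-1}]=(\mathfrak{g}[u^{-1}]\oplus\mathfrak{g})^{\perp_{Q_2}}\subseteq W^{\perp_{Q_2}}=W\subseteq\mathfrak{g}[u^{-1}]\oplus\mathfrak{g}$, and the containment of the single element $u^{-1}b(u)x$, $x\neq 0$, in $\mathfrak{g}[u^{-1}]\oplus\mathfrak{g}$ forces $u^{-1}b(u)\in\mathbb{C}[u^{-1}]$, i.e.\ $\deg b\le 1$ --- polynomiality and the degree bound in one stroke. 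The identical componentwise computation at the vertices $h_i$ (pairing the $\mathfrak{g}_{\beta}$-component against the $\mathfrak{g}_{-\beta}$-component $u^{\lfloor m_i/k_i\rfloor}\mathbb{C}[u^{-1}]$) returns precisely your inequality $\deg b\le 1+\lfloor m_i/k_i\rfloor-\lceil m_i/k_i\rceil$, hence assertion (2) as well. Note that with complements taken in this ambient space, transversality to $\mathfrak{g}[[u]]$ is not even needed for the bound; only $W=W^{\perp_{Q_2}}$ and the inclusion into the order are used.
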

 
\vspace*{0.5cm}

\textbf{Case III.} 
Let us suppose $\bar{\delta}$ satisfies the condition 
$D_{\bar{\delta}}(L)=\mathfrak{g}((u))\oplus (\mathfrak{g}+\varepsilon\mathfrak{g})$, where $\varepsilon^2=0$, and $W$ is the corresponding Lagrangian subalgebra in the double transversal to $L$.

Any $\sigma(u)\in \mathrm{Ad}(\mathfrak{g}[u])$  induces an automorphism $\sigma(0)\in\mathrm{Ad}(\mathfrak{g})$, which in turn gives an automorphism $\bar{\sigma}(0)$ of $\mathfrak{g}+\varepsilon\mathfrak{g}$ defined by $\bar{\sigma}(0)(x+\varepsilon y)=\sigma(0)(x)+\varepsilon\sigma(0)(y)$. Then $\tilde{\sigma}(u)=\sigma(u)\oplus \bar{\sigma}(0)$ is an automorphism of $\mathfrak{g}((u))\oplus (\mathfrak{g}+\varepsilon\mathfrak{g})$. One can prove that there exists $\sigma(u)\in \mathrm{Ad}(\mathfrak{g}[u])$ 
such that $\tilde{\sigma}(u)(W\cdot\mathbb{C}[[u^{-1}]])\subseteq \mathbb{O}_{\alpha}\oplus (\mathfrak{g}+\varepsilon \mathfrak{g})$, where $\alpha$ is either a simple root or $-\alpha_{\max}$. The following result is similar to Theorem \ref{condCaseI} and \ref{condCaseII}:

\begin{thm} \cite{SZ}\label{condCaseIII}
 Let $\alpha_{\max}=\sum  k_j\alpha_j$, $\alpha_j\in\Gamma$ and  
$\sigma\in\mathrm{Ad}(\mathfrak{g}[u])$.

(1) Assume that $\tilde{\sigma}(u)(W\cdot\mathbb{C}[[u^{-1}]])\subseteq \mathbb{O}_{-\alpha_{\max}}\oplus(\mathfrak{g}+\varepsilon \mathfrak{g})$. Then $\frac{1}{a(u)}$ is a constant.

(2) Assume that 
$\tilde{\sigma}(u)(W\cdot\mathbb{C}[[u^{-1}]])\subseteq \mathbb{O}_{\alpha_i}\oplus\mathfrak{g}$, for some $i$. Then $\frac{1}{a(u)}$ is a constant if $k_i=1$, 
and the above inclusion is impossible if $k_i>1$.
\end{thm}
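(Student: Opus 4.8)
The plan is to follow the scheme used for Theorems \ref{condCaseI} and \ref{condCaseII} and to track the single extra power of $u$ carried by $T_3$ as it passes through the valuation estimates; this shift is what lowers every bound by one and, in the tightest case, turns an inequality into an impossibility. Writing $T_j$ for the functional of case $j\in\{1,2,3\}$, one checks directly from the definitions that $T_j(f)$ is the coefficient of $u^{\,j-2}$ in $a(u)f(u)$, so that $T_j(fg)=\mathrm{res}_0\bigl(u^{1-j}a(u)\,fg\bigr)$, where $\mathrm{res}_0$ extracts the coefficient of $u^{-1}$. Since $\sigma\in\mathrm{Ad}(\mathfrak{g}[u])$ is inner, the induced $\tilde\sigma(u)=\sigma(u)\oplus\bar\sigma(0)$ preserves the Killing form and hence $Q_3$; replacing $W$ by $\tilde\sigma(u)(W)$ I may assume from the outset that $V:=W\cdot\mathbb{C}[[u^{-1}]]\subseteq\mathbb{O}_\alpha\oplus(\mathfrak{g}+\varepsilon\mathfrak{g})$. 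The finite-dimensional summand is paired with itself by the nondegenerate block $-K(x_3,y_2)-K(x_2,y_3)-aK(x_2,y_2)$ and is transverse to the $u^{-1}$-adic filtration, so it only fixes finitely many constant data (consistent with $T_3(u)=1$, $T_3(1)=a$) and plays no role in the valuation count; all information about $a(u)$ is read off the $\mathbb{O}_\alpha$-component.

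For the $\mathfrak{g}((u))$-block the relevant object is the twisted residue $Q_3(s,t)=\mathrm{res}_0\bigl(m(u)\,st\bigr)$ with $m(u):=u^{-2}a(u)$, the exponent $-2=1-j$ (for $j=3$) being exactly one lower than in Cases I and II. Transporting this pairing to the completion $V\subseteq\mathfrak{g}((u^{-1}))$ requires expanding $a(u)$ at $u=\infty$, and demanding that this expansion reproduce on the Laurent polynomials of $W$ the functional $T_3$ fixed by the $0$-expansion is exactly what forces $a(u)$ to be the $\infty$-expansion of a rational function with $\tfrac{1}{a(u)}=p(u)$ a polynomial, $p(0)=1$; set $d:=\deg p$, so that at infinity $m(u)$ has $u^{-1}$-adic valuation $-2-d$. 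The remaining task is to bound $d$.

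Now I transport the Lagrangian property. Since $W=W^{\perp}$ in the double and $W\subseteq V$ (as $1\in\mathbb{C}[[u^{-1}]]$), inclusion-reversal gives $V^{\perp}\subseteq W^{\perp}=W\subseteq V$, i.e. $V$ is coisotropic for the extended form; with $V\subseteq\mathbb{O}_\alpha$ this yields the single inclusion I will exploit, $\mathbb{O}_\alpha^{\perp}\subseteq\mathbb{O}_\alpha$. Using the residue-duality identity $\bigl(u^{A}\mathbb{C}[[u^{-1}]]\bigr)^{\perp}=u^{-A-2}\mathbb{C}[[u^{-1}]]$ together with the valuation shift $-2-d$ coming from multiplication by $m(u)$, I compute $\mathbb{O}_\alpha^{\perp}$ one slot at a time. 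On the $\mathfrak{g}_\beta$-space, where $\mathbb{O}_\alpha=\mathbb{O}_{h}$ contributes $u^{-s_\beta}\mathbb{C}[[u^{-1}]]$ with $s_\beta=\lceil\beta(h)\rceil=\lceil c_i^\beta/k_i\rceil$ (and $\beta(0)=0$ for $\alpha=-\alpha_{\max}$), the inclusion $\mathbb{O}_\alpha^{\perp}\subseteq\mathbb{O}_\alpha$ reduces, after pairing $\beta$ with $-\beta$, to the scalar inequality $d\le 3-j-\bigl(\lceil c_i^\beta/k_i\rceil-\lfloor c_i^\beta/k_i\rfloor\bigr)$; for $j=3$ this reads $d\le -\bigl(\lceil c_i^\beta/k_i\rceil-\lfloor c_i^\beta/k_i\rfloor\bigr)$ and must hold for every root $\beta$ (the Cartan slot $\beta=0$ giving only $d\le 0$).

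The theorem then falls out by choosing the worst $\beta$. For $\alpha=-\alpha_{\max}$ one has $h=0$, every $s_\beta=0$, and the bound is $d\le 0$, so $\tfrac{1}{a(u)}$ is constant, proving (1). For $\alpha=\alpha_i$ with $k_i=1$ all the $c_i^\beta/k_i$ are integers, the correction $\lceil\cdot\rceil-\lfloor\cdot\rfloor$ vanishes identically, and again $d\le 0$; for $k_i>1$ the simple root $\beta=\alpha_i$ itself has $c_i^\beta=1$ with $k_i\nmid 1$, so $\lceil c_i^\beta/k_i\rceil-\lfloor c_i^\beta/k_i\rfloor=1$ and the inequality sharpens to $d\le -1$. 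Since $p(0)=1$ makes $\tfrac{1}{a(u)}$ a nonzero polynomial we have $d\ge 0$, so $d\le -1$ is impossible, proving (2). The main obstacle is not this final bookkeeping but the two structural inputs feeding it: first, justifying rigorously that the double's form, defined through the expansion of $a$ at $0$, descends to the order $V$ in $\mathfrak{g}((u^{-1}))$ and thereby forces $\tfrac{1}{a}$ to be a genuine polynomial — the bridge between the two completions $\mathbb{C}((u))$ and $\mathbb{C}((u^{-1}))$; and second, the passage from $W=W^{\perp}$ to $V^{\perp}\subseteq V$ for the extended form, after which the slot-by-slot valuation count and the isolation of a root with $k_i\nmid c_i^\beta$ are routine.
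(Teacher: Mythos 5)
The paper itself contains no proof of Theorem~\ref{condCaseIII}: it is quoted from \cite{SZ} (still ``to appear''), so your proposal can only be measured against the mechanism that the paper's own computations in Sections 2--4 rely on, namely perpendicularity with respect to the formal residue form.

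Your endgame is correct and is the right mechanism: the identification $T_3(fg)=\mathrm{Res}_{u=0}(u^{-2}a(u)fg)$, the slot-by-slot reduction to $d\le 3-j-\left(\lceil c_i^\beta/k_i\rceil-\lfloor c_i^\beta/k_i\rfloor\right)$, and the case analysis (for $j=3$: $d\le 0$ in part (1) and for $k_i=1$; $d\le -1$, hence impossibility, for $k_i>1$ using $\beta=\alpha_i$) all check out. The genuine gap is in the middle, and it is twofold. First, you \emph{assert} that demanding compatibility of the pairing with the completion $V=W\cdot\mathbb{C}[[u^{-1}]]\subseteq\mathfrak{g}((u^{-1}))$ ``forces'' $1/a(u)$ to be a polynomial $p(u)$ with $p(0)=1$; but polynomiality of $1/a(u)$ is precisely (part of) the conclusion of the theorem, and no argument for it is given -- you then only bound the degree of a polynomial whose existence you assumed. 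Second, the chain $V^{\perp}\subseteq W^{\perp}=W\subseteq V$ mixes two different ambient quadratic spaces: $W^{\perp}=W$ holds inside $\mathfrak{g}((u))\oplus(\mathfrak{g}+\varepsilon\mathfrak{g})$ with respect to $Q_{a(u)}$, which is well defined for \emph{formal} $a(u)\in\mathbb{C}[[u]]$, whereas $V^{\perp}$ would have to be computed inside $\mathfrak{g}((u^{-1}))\oplus(\mathfrak{g}+\varepsilon\mathfrak{g})$ with respect to an ``extended form'' that you never construct -- and cannot construct without the very rationality of $a(u)$ assumed in the first step. As written, the argument is circular.

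Both defects are repairable at once, and the repair shows polynomiality is an output rather than an input: never leave $\mathfrak{g}((u))$. Since $W$ is bounded, its $\mathfrak{g}((u))$-components are Laurent polynomials, so the hypothesis gives $W\subseteq M\oplus(\mathfrak{g}+\varepsilon\mathfrak{g})$ with $M:=\mathbb{O}_{\alpha}\cap\mathfrak{g}[u,u^{-1}]$ (this is exactly the move made in Proposition~\ref{max_ord}). Now take perpendiculars inside the honest double with respect to $Q_{a(u)}$: the finite-dimensional block drops out because its form is nondegenerate, and on the $\mathfrak{g}_{-\beta}$-slot one computes $\left(M\oplus(\mathfrak{g}+\varepsilon\mathfrak{g})\right)^{\perp}$ to be $\frac{u^{\,s_\beta+j-3}}{a(u)}\,\mathbb{C}[u^{-1}]$ with $s_\beta=\lceil c_i^\beta/k_i\rceil$, which makes sense formally because $1/a(u)\in\mathbb{C}[[u]]$. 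The Lagrangian condition then yields $\left(M\oplus(\mathfrak{g}+\varepsilon\mathfrak{g})\right)^{\perp}\subseteq W^{\perp}=W\subseteq M\oplus(\mathfrak{g}+\varepsilon\mathfrak{g})$, i.e.\ $\frac{u^{\,s_\beta+j-3}}{a(u)}\in u^{\lfloor c_i^\beta/k_i\rfloor}\,\mathbb{C}[u^{-1}]$ for every root $\beta$; intersecting with $\mathbb{C}[[u]]$ this single inclusion proves that $1/a(u)$ is a polynomial \emph{and} gives your degree inequality (a negative bound forcing $1/a(u)\in u^{-1}\mathbb{C}[u^{-1}]\cap\mathbb{C}[[u]]=0$, contradicting $1/a(0)=1$, whence the impossibility in part (2)). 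With the middle section rewritten this way, your proof is sound.
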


\begin{rem}
In \cite{SZ} it was noticed that, by means of a change of variable in $\mathbb{C}[u]$ and rescaling the nondegenerate bilinear form $Q$, one may assume that $a(u)$ has one of the following forms: 
\begin{enumerate}
\item
$a(u)=1/(1-c_1u)(1-c_2u)$, for non-zero constants $c_1\neq c_2$ 

\item $a(u)=1/(1-u)^2$ 

\item $a(u)=1/1-u$ 

\item $a(u)=1$. 

\end{enumerate}
\end{rem}

\section{Lie bialgebra structures on $\mathfrak{g}[u]$ in Case I}

In this section we will focus on Case I. We first note that the nondegenerate bilinear form on $\mathfrak{g}((u))$ is given by the formula 
\[Q_{a(u)}(f_1(u),f_2(u))=\mathrm{Res}_{u=0}(K(f_1(u),f_2(u))\cdot a(u)),\]
where $K$ is the Killing form of the Lie algebra $\mathfrak{g}((u))$ over 
$\mathbb{C}((u))$.

In \cite{SZ} the following result was proved: 
\begin{prop}
\cite{SZ} There exists a one-to-one correspondence between 
Lie bialgebra structures 
$\delta$ on $\mathfrak{g}[u]$ satisfying  
$D_{\bar{\delta}}(\mathfrak{g}[[u]])=\mathfrak{g}((u))$ and 
bounded Lagrangian subalgebras $W$ of $\mathfrak{g}((u))$, with respect to 
the nondegenerate bilinear form $Q_{a(u)}$, and transversal to $\mathfrak{g}[[u]]$.

\end{prop}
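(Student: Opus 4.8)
The plan is to read this as the Manin-triple form of the Drinfeld double correspondence, specialized to the pair $\mathfrak{g}[u]\subset\mathfrak{g}[[u]]$ and the form $Q_{a(u)}$, and then to supply the two ingredients genuinely particular to the polynomial setting: the boundedness of the complementary Lagrangian and the descent of the induced cobracket from formal power series back to polynomials. As a preliminary observation I would record that $\mathfrak{g}[[u]]$ is itself a Lagrangian subalgebra for $Q_{a(u)}$, since $a(u)=1+\sum_{k=1}^{\infty}a_ku^k$ is a power series and hence $\mathrm{Res}_{u=0}\bigl(K(f_1,f_2)a(u)\bigr)=0$ whenever $f_1,f_2\in\mathfrak{g}[[u]]$; maximality follows because the residue pairing identifies $\mathfrak{g}((u))/\mathfrak{g}[[u]]$ with the restricted dual of $\mathfrak{g}[[u]]$.

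For the forward direction, starting from $\delta$ on $\mathfrak{g}[u]$ with $D_{\bar\delta}(\mathfrak{g}[[u]])=\mathfrak{g}((u))$, I would let $W$ be the image of the restricted dual $(\mathfrak{g}[[u]])^*$ inside the double. By the general construction of the double, $W$ is a Lie subalgebra, it is isotropic for $Q_{a(u)}$, and $\mathfrak{g}((u))=\mathfrak{g}[[u]]\oplus W$; together with the preliminary observation this makes $W$ a Lagrangian subalgebra transversal to $\mathfrak{g}[[u]]$. Its boundedness is exactly property (i) recorded above, so the assignment $\delta\mapsto W$ is well defined and lands in the stated class.

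For the converse, given a bounded Lagrangian subalgebra $W$ transversal to $\mathfrak{g}[[u]]$, transversality together with $\mathfrak{g}[[u]]$ being Lagrangian makes the pairing $W\times\mathfrak{g}[[u]]\to\mathbb{C}$, $(w,x)\mapsto Q_{a(u)}(w,x)$, perfect in the restricted sense, so it identifies $W$ with $(\mathfrak{g}[[u]])^*$. Dualizing the Lie bracket of $W$ then yields a cobracket $\bar\delta$ on $\mathfrak{g}[[u]]$, and the Lie bialgebra axioms follow from the fact that $(\mathfrak{g}((u)),\mathfrak{g}[[u]],W)$ is a Manin triple: the co-Jacobi identity comes from the Jacobi identity in $W$, and the cocycle condition from the invariance of $Q_{a(u)}$ and the cross-bracket relations in $\mathfrak{g}((u))$. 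The essential step is then to show that $\bar\delta$ descends to $\mathfrak{g}[u]$, i.e. that $\bar\delta(x)\in\mathfrak{g}[u]\otimes\mathfrak{g}[u]$ for $x\in\mathfrak{g}[u]$, defining the required $\delta$; and finally that $D_{\bar\delta}(\mathfrak{g}[[u]])$ reconstructs $\mathfrak{g}((u))$, which is automatic from the Manin triple. That the two assignments are mutually inverse is then formal, since each recovers the same decomposition $\mathfrak{g}((u))=\mathfrak{g}[[u]]\oplus W$ and the same bracket on $W$.

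The main obstacle I anticipate is precisely the descent to polynomials. Writing $W$ as the graph of a linear map $\psi\colon u^{-1}\mathfrak{g}[u^{-1}]\to\mathfrak{g}[[u]]$ over the standard complement, boundedness forces $\psi$ to take values in the finite-dimensional space $\mathfrak{g}[u]_{\leq n}$ of polynomials of degree at most $n$; the delicate point is that, unwinding $\bar\delta(x)=\bigl([\cdot,\cdot]_W\bigr)^{*}(x)$ through the residue pairing, the coefficient of $u^p\otimes u^q$ in $\bar\delta(yu^m)$ does not obviously vanish for large $p,q$ unless $\psi$ annihilates all sufficiently deep elements $u^{-1-q}e$. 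I would therefore have to exploit that $W$ is a subalgebra, and not merely a bounded subspace, to show that $\psi(u^{-1-q}e)=0$ once $q$ exceeds a bound depending on $W$, which is exactly what collapses the completed tensor product $\mathfrak{g}[[u]]\,\hat{\otimes}\,\mathfrak{g}[[u]]$ back to the honest tensor product $\mathfrak{g}[u]\otimes\mathfrak{g}[u]$. The invertibility of $a(u)$ as a power series enters only to keep the identification $W\cong(\mathfrak{g}[[u]])^*$ degree-triangular, and does not affect this finiteness argument.
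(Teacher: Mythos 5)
The paper itself gives no proof of this proposition (it is imported wholesale from \cite{SZ}), so your attempt can only be judged on internal correctness and against the mechanism the paper actually runs in Section~2, where the correspondence is made explicit. Your framing is the right one: $\mathfrak{g}[[u]]$ is indeed Lagrangian for $Q_{a(u)}$, the forward direction is fine granted the boundedness property (i) of Case~I (which the paper also quotes from \cite{SZ}), and describing $W$ as the graph of a map $\psi\colon u^{-1}\mathfrak{g}[u^{-1}]\to\mathfrak{g}[[u]]$ with values in $\mathfrak{g}[u]_{\leq n}$ is correct.

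The gap is in the backward direction, and it sits exactly at the step you flag as the ``main obstacle'' and then leave open: you never prove that $\bar\delta(\mathfrak{g}[u])\subseteq\mathfrak{g}[u]\otimes\mathfrak{g}[u]$, and the route you propose would fail because the statement you intend to prove is false. Take $a(u)=1/(1-c_1u)(1-c_2u)$ and the subalgebra $W_0$ of Corollary~\ref{sol_I}: it contains $(u^{-1}-c_1)e_{\alpha}$ and the subspace $(u^{-1}-c_1)(u^{-1}-c_2)\mathfrak{g}[u^{-1}]$, hence $(u^{-k}-c_1^{k})e_{\alpha}\in W_0$ for every $k\geq 1$, so that $\psi(u^{-k}e_{\alpha})=-c_1^{k}e_{\alpha}\neq 0$ for \emph{all} $k$; no use of the subalgebra property can make $\psi$ kill deep monomials, yet the cobracket is still polynomial (the paper exhibits its rational $r$-matrix). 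What actually does the work is not that $W$ is a subalgebra but that it is \emph{Lagrangian}: $W=W^{\perp}$ together with $W\subseteq u^{n}\mathfrak{g}[u^{-1}]$ gives
\[
W\;\supseteq\;\bigl(u^{n}\mathfrak{g}[u^{-1}]\bigr)^{\perp}\;=\;\frac{1}{a(u)}\,u^{-n-2}\mathfrak{g}[u^{-1}].
\]
This single inclusion first forces $1/a(u)$ to be a polynomial (its right-hand side must consist of Laurent polynomials), so, contrary to your closing remark, the shape of $a(u)$ is the heart of the finiteness argument rather than a side issue; and second it shows that $\psi$ vanishes on a subspace of finite codimension which is the $\frac{1}{a(u)}$-dressing of the deep monomials, not on the monomials themselves. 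Consequently the dual basis element to $xu^{m}$ is, for $m>n$, the pure element $\frac{1}{a(u)}u^{-m-1}\tilde{x}\in W$; bracketing two such elements and pairing against a fixed polynomial makes the factors $a(u)$ and $1/a(u)$ cancel, and degree counting then kills all but finitely many coefficients of $\bar\delta(x)$. This perpendicular-plus-finite-reduction mechanism (pass to $W\supseteq\mathfrak{g}[u^{-1}]^{\perp}$, identify the quotient with $\mathfrak{g}\oplus\mathfrak{g}$ or $\mathfrak{g}+\varepsilon\mathfrak{g}$, and read off dual bases there) is precisely how the paper proceeds, and it is the ingredient missing from your proposal.
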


\begin{rem}
If $W$ is a bounded Lagrangian subalgebra of $\mathfrak{g}((u))$ transversal to 
$\mathfrak{g}[[u]]$, then $W\oplus \mathfrak{g}[u]=\mathfrak{g}[u,u^{-1}]$. The equality implies that one can choose dual bases in $W$ and $\mathfrak{g}[u]$ with respect to $Q_{a(u)}$. 
The corresponding Lie bialgebra structure $\delta$
can be reconstructed starting from $W$ in the following way: let us choose a system of Chevalley-Weyl generators $e_{\alpha}$, $e_{-\alpha}$,
$h_{\alpha}$, for all positive roots $\alpha$, such that 
$K(e_{\alpha},e_{-\alpha})=1$ and $h_{\alpha}=[e_{\alpha},e_{-\alpha}]$. The canonical basis of $\mathfrak{g}[u]$ is  formed by $e_{\alpha}u^k$, $e_{-\alpha}u^k$, $h_{\alpha}u^k$, for all positive roots $\alpha$ and all natural $k$. Denote these elements by $e_{\alpha,k}$. 
Let $w_{\alpha,k}$ be a dual basis in $W$ with respect to the the nondegenerate bilinear form $Q_{a(u)}$ and consider the $r$-matrix \[r(u,v)=\sum_{\alpha,k} e_{\alpha,k}\otimes w_{\alpha,k}.\] 
Then \[\delta(f(u))=[f(u)\otimes 1+1\otimes f(v),r(u,v)],\]
for all $f(u)\in \mathfrak{g}[u]$. 

\end{rem}

\begin{prop}\label{max_ord}
Suppose that $W$ is a bounded Lagrangian subalgebra of $\mathfrak{g}((u))$, with respect to $Q_{a(u)}$ and transversal to $\mathfrak{g}[[u]]$. Then there exists $\sigma\in\mathrm{Aut}_{\mathbb{C}[u]}(\mathfrak{g}[u])$ such that $\sigma(u)(W)\subseteq \mathbb{O}_{\alpha}\cap \mathfrak{g}[u,u^{-1}]$, where $\alpha$ is either a simple root or $-\alpha_{\max}$. 
\end{prop}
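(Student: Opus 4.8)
The plan is to deduce this proposition by specializing the general order-theoretic results already recorded in the excerpt to the polynomial situation, rather than re-proving the embedding into $\mathbb{O}_\alpha$ from scratch. The starting point is the observation, made in property (ii) of Case~I, that $W\cdot\mathbb{C}[[u^{-1}]]$ is an order in $\mathfrak{g}((u^{-1}))$, and in property (iii) that there exists $\sigma\in\mathrm{Aut}_{\mathbb{C}[u]}(\mathfrak{g}[u])$ with $\sigma(u)(W\cdot\mathbb{C}[[u^{-1}]])\subseteq \mathbb{O}_\alpha$, where $\alpha$ is a simple root or $-\alpha_{\max}$. Since the hypotheses of the present proposition are exactly those that make $W\cdot\mathbb{C}[[u^{-1}]]$ an order, the first step is simply to invoke this embedding to obtain the appropriate $\sigma$.

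The second step is to pass from the inclusion $\sigma(u)(W\cdot\mathbb{C}[[u^{-1}]])\subseteq\mathbb{O}_\alpha$ back to a statement about $W$ itself. Here I would use that $\sigma(u)\in\mathrm{Aut}_{\mathbb{C}[u]}(\mathfrak{g}[u])$ acts on $\mathfrak{g}[u,u^{-1}]$ via the natural embedding into $\mathrm{Aut}_{\mathbb{C}((u^{-1}))}(\mathfrak{g}((u^{-1})))$ described in the excerpt, so in particular $\sigma(u)$ preserves $\mathfrak{g}[u,u^{-1}]$. Since $W\subseteq W\cdot\mathbb{C}[[u^{-1}]]$ and $W\subseteq\mathfrak{g}[u,u^{-1}]$ (because $W$ is bounded, hence $W\subseteq u^n\mathfrak{g}[u^{-1}]\subseteq\mathfrak{g}[u,u^{-1}]$ by property (i)), applying $\sigma(u)$ gives
\[
\sigma(u)(W)\subseteq\sigma(u)(W\cdot\mathbb{C}[[u^{-1}]])\cap\sigma(u)(\mathfrak{g}[u,u^{-1}])\subseteq\mathbb{O}_\alpha\cap\mathfrak{g}[u,u^{-1}],
\]
which is exactly the desired conclusion. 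Thus the proof is essentially an intersection argument combined with the $\mathbb{C}[u]$-linearity of $\sigma(u)$.

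The step I expect to require the most care is verifying that $\sigma(u)(W)$ genuinely lands in the polynomial part $\mathfrak{g}[u,u^{-1}]$, i.e.\ that $\sigma(u)$ maps $\mathfrak{g}[u,u^{-1}]$ into itself. This is where the precise form of the embedding $\mathrm{Aut}_{\mathbb{C}[u]}(\mathfrak{g}[u])\hookrightarrow\mathrm{Aut}_{\mathbb{C}((u^{-1}))}(\mathfrak{g}((u^{-1})))$ matters: the defining formula $\sigma(u^{-k}x)=u^{-k}\sigma(x)$ for $x\in\mathfrak{g}[u]$ shows that $\sigma(u)$ multiplies the $\mathbb{C}[u^{-1}]$-coefficients without introducing infinitely many new powers of $u$, so that a Laurent polynomial is sent to a Laurent polynomial. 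I would state this invariance explicitly, since it is precisely what lets the abstract order embedding of property (iii) be restricted to the Laurent-polynomial subalgebra where the $r$-matrix construction of the preceding remark actually lives. The remaining verifications — that $\mathbb{O}_\alpha$ is stable and that the simple-root-or-$(-\alpha_{\max})$ dichotomy is inherited unchanged — are immediate from property (iii), so no separate case analysis on $\alpha$ is needed here.
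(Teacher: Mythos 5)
Your proposal is correct and is essentially identical to the paper's own proof: both invoke the embedding $\sigma(u)(W\cdot\mathbb{C}[[u^{-1}]])\subseteq\mathbb{O}_{\alpha}$ from \cite{SZ}, use boundedness to get $W\subseteq\mathfrak{g}[u,u^{-1}]$, and conclude via the same intersection argument $\sigma(u)(W)\subseteq\sigma(u)(W\cdot\mathbb{C}[[u^{-1}]])\cap\sigma(u)(\mathfrak{g}[u,u^{-1}])\subseteq\mathbb{O}_{\alpha}\cap\mathfrak{g}[u,u^{-1}]$. Your only addition is an explicit justification, via the formula $\sigma(u^{-k}x)=u^{-k}\sigma(x)$, of the invariance $\sigma(u)(\mathfrak{g}[u,u^{-1}])=\mathfrak{g}[u,u^{-1}]$, which the paper asserts without comment.
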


\begin{proof}
Since $W$ is bounded, we have $W\subseteq \mathfrak{g}[u,u^{-1}]$. 
From \cite{SZ} we also know that there exists $\sigma(u)\in \mathrm{Aut}_{\mathbb{C}[u]}(\mathfrak{g}[u])$ 
such that $\sigma(u)(W\cdot\mathbb{C}[[u^{-1}]])\subseteq \mathbb{O}_{\alpha}$, where $\alpha$ is either a simple root or $-\alpha_{\max}$. 

On the other hand, $\sigma(u)(\mathfrak{g}[u,u^{-1}])=\mathfrak{g}[u,u^{-1}]$. 
One obtains the inclusions: 
$\sigma(u)(W)\subseteq\sigma(u)(W\cdot\mathbb{C}[[u^{-1}]])\cap \sigma(u)(\mathfrak{g}[u,u^{-1}])\subseteq \mathbb{O}_{\alpha}\cap \mathfrak{g}[u,u^{-1}]$. 

\end{proof}

In what follows we will restrict ourselves to the case $\alpha=-\alpha_{\max}$. Let us make the remark that $\mathbb{O}_{-\alpha_{\max}}\cap\mathfrak{g}[u,u^{-1}]=\mathfrak{g}[u^{-1}]$. 
Consider also the Lie algebra $\mathfrak{g}\oplus\mathfrak{g}$, together with
the nondegenerate bilinear form 
\[\bar{Q}((x_1,y_1),(x_2,y_2))=K(x_1,x_2)-K(y_1,y_2).\] 

\begin{prop}
Let $a(u)=1/(1-c_1u)(1-c_2u)$, for non-zero constants $c_1\neq c_2$. There exists a one-to-one correspondence between Lagrangian subalgebras $W$ of $\mathfrak{g}((u))$, with respect to $Q_{a(u)}$, which are transversal to $\mathfrak{g}[[u]]$ and satisfy 
$W \subseteq \mathfrak{g}[u^{-1}]$, and Lagrangian subalgebras in $\mathfrak{g}\oplus\mathfrak{g}$, with respect to $\bar{Q}$, transversal to $\mathrm{diag}(\mathfrak{g})$. 

\end{prop}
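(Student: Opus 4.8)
The plan is to linearize the form $Q_{a(u)}$ on $\mathfrak{g}[u^{-1}]$ through the two poles of $a(u)$. Since $a(u)=1/((1-c_1u)(1-c_2u))$ is regular at $0$ with simple poles at $u=1/c_1$ and $u=1/c_2$, I would introduce the evaluation map
\[\Phi:\mathfrak{g}[u^{-1}]\to\mathfrak{g}\oplus\mathfrak{g},\qquad \Phi(f)=(f(1/c_1),f(1/c_2)),\]
which is well defined because $f$ is a polynomial in $u^{-1}$, and which is a Lie algebra homomorphism. For $f_1,f_2\in\mathfrak{g}[u^{-1}]$ the rational function $K(f_1,f_2)a(u)$ is $O(u^{-2})$ at infinity, so the sum of all its residues vanishes; evaluating the residues at $1/c_1$ and $1/c_2$ then yields
\[Q_{a(u)}(f_1,f_2)=\frac{1}{c_1-c_2}\bigl(K(f_1(1/c_1),f_2(1/c_1))-K(f_1(1/c_2),f_2(1/c_2))\bigr)=\frac{1}{c_1-c_2}\,\bar{Q}(\Phi(f_1),\Phi(f_2)).\]
Thus $\Phi$ pulls $\bar{Q}$ back to a nonzero scalar multiple of $Q_{a(u)}$. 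Moreover $\Phi$ is surjective (Lagrange interpolation, tensored with $\mathfrak{g}$) with kernel $I=(u^{-1}-c_1)(u^{-1}-c_2)\mathfrak{g}[u^{-1}]$, which by the pullback identity and nondegeneracy of $\bar{Q}$ is exactly the radical of $Q_{a(u)}$ restricted to $\mathfrak{g}[u^{-1}]$. Note finally that $\Phi$ sends the constants $\mathfrak{g}\subseteq\mathfrak{g}[u^{-1}]$ onto $\mathrm{diag}(\mathfrak{g})$, which is Lagrangian for $\bar{Q}$.

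For the forward direction I would send $W\mapsto\Phi(W)$. Here I first record that $\mathfrak{g}[[u]]$ is Lagrangian for $Q_{a(u)}$ and that, since $\mathfrak{g}[u^{-1}]\cap\mathfrak{g}[[u]]=\mathfrak{g}$, the transversality $W\oplus\mathfrak{g}[[u]]=\mathfrak{g}((u))$ is equivalent to $W\oplus\mathfrak{g}=\mathfrak{g}[u^{-1}]$ for $W\subseteq\mathfrak{g}[u^{-1}]$. The key structural observation is that any such Lagrangian $W$ automatically contains $I$: each element of $I$ is $Q_{a(u)}$-orthogonal to all of $\mathfrak{g}[u^{-1}]\supseteq W$, hence lies in $W^{\perp}=W$. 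Now $\Phi(W)$ is a subalgebra (homomorphic image) and is $\bar{Q}$-isotropic by the pullback identity; applying $\Phi$ to $W\oplus\mathfrak{g}=\mathfrak{g}[u^{-1}]$ gives $\Phi(W)+\mathrm{diag}(\mathfrak{g})=\mathfrak{g}\oplus\mathfrak{g}$, and directness follows because $\Phi(w)\in\mathrm{diag}(\mathfrak{g})$ forces $w-x\in I\subseteq W$ for the matching constant $x$, whence $x\in W\cap\mathfrak{g}=0$. Being an isotropic complement to a Lagrangian, $\Phi(W)$ has half-dimension and is therefore Lagrangian.

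Conversely, given a Lagrangian subalgebra $\bar{W}$ of $\mathfrak{g}\oplus\mathfrak{g}$ transversal to $\mathrm{diag}(\mathfrak{g})$, I would set $W:=\Phi^{-1}(\bar{W})\subseteq\mathfrak{g}[u^{-1}]$, a subalgebra containing $I$. Isotropy of $W$ is immediate from the pullback identity; the transversality $W\oplus\mathfrak{g}=\mathfrak{g}[u^{-1}]$ follows from $\bar{W}\oplus\mathrm{diag}(\mathfrak{g})=\mathfrak{g}\oplus\mathfrak{g}$ together with $I\subseteq W$; and Lagrangianity in $\mathfrak{g}((u))$ follows from a perpendicular argument: writing $\xi=w+p\in W^{\perp}$ with $w\in W$, $p\in\mathfrak{g}[[u]]$, one finds $p\in\mathfrak{g}[[u]]\cap W^{\perp}$ orthogonal to $W+\mathfrak{g}[[u]]=\mathfrak{g}((u))$, so $p=0$ and $\xi\in W$. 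Since $I\subseteq W$ in both constructions, $\Phi^{-1}(\Phi(W))=W$ and $\Phi(\Phi^{-1}(\bar{W}))=\bar{W}$, so the two assignments are mutually inverse, establishing the bijection.

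\textbf{The main obstacle} is twofold and concentrated in the first step. First, the residue-theorem computation must be carried out cleanly so that $Q_{a(u)}$ on $\mathfrak{g}[u^{-1}]$ is identified, up to the explicit nonzero constant $1/(c_1-c_2)$, with the $\Phi$-pullback of $\bar{Q}$; this is exactly where the hypothesis $c_1\neq c_2$ (two distinct simple poles) is essential, and where a double pole or an absent pole would break the splitting. Second, and more conceptually, one must verify $I\subseteq W$ so that $\Phi$ induces a genuine bijection rather than merely a surjection with kernel — this is precisely the point where the maximality (Lagrangian) hypothesis on $W$, and not merely its isotropy, is indispensable.
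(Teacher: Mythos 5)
Your proposal is correct and follows essentially the same route as the paper: your evaluation map $\Phi$ is exactly the paper's homomorphism $\psi$ (evaluation at $u^{-1}=c_1$ and $u^{-1}=c_2$, with kernel $(u^{-1}-c_1)(u^{-1}-c_2)\mathfrak{g}[u^{-1}]$), and the correspondence is the same quotient-by-kernel bijection. The only difference is one of detail: you make explicit the residue computation identifying $Q_{a(u)}$ with $\tfrac{1}{c_1-c_2}\,\bar{Q}\circ(\Phi\otimes\Phi)$ and the maximality and bijectivity checks, which the paper asserts without proof.
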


\begin{proof}
Assume $W$ is a Lagrangian subalgebra of $\mathfrak{g}((u))$ which is transversal to $\mathfrak{g}[[u]]$ and such that $W \subseteq \mathfrak{g}[u^{-1}]$. This implies that  
$W \supseteq \mathfrak{g}[u^{-1}]^{\perp}=(1-c_1u)(1-c_2u)u^{-2}\mathfrak{g}[u^{-1}]=
(u^{-1}-c_1)(u^{-1}-c_2)\mathfrak{g}[u^{-1}]$. 

The quotient $\frac{W}{(u^{-1}-c_1)(u^{-1}-c_2)\mathfrak{g}[u^{-1}]}$ is a subalgebra of $\frac{\mathfrak{g}[u^{-1}]}{(u^{-1}-c_1)(u^{-1}-c_2)\mathfrak{g}[u^{-1}]}$ which can be identified with $\mathfrak{g}\oplus\mathfrak{g}$. Indeed, let us consider the epimorphism $\psi: \mathfrak{g}[u^{-1}]\longrightarrow \mathfrak{g}\oplus\mathfrak{g}$ defined by $\psi(x)=(x,x)$, for any $x\in \mathfrak{g}$ and $\psi(xu^{-1})=(xc_1,xc_2)$. Then $\mathrm{Ker}(\psi)=(u^{-1}-c_1)(u^{-1}-c_2)\mathfrak{g}[u^{-1}]$, which implies that $\frac{\mathfrak{g}[u^{-1}]}{(u^{-1}-c_1)(u^{-1}-c_2)\mathfrak{g}[u^{-1}]}$ is isomorphic to $\mathfrak{g}\oplus\mathfrak{g}$ via an isomorphism $\hat{\psi}$ induced by $\psi$. 

Let  $\bar{W}$ be the image of $W$ in $\mathfrak{g}\oplus\mathfrak{g}$. Since $W$ is a Lagrangian subalgebra of $\mathfrak{g}((u))$, $\bar{W}$ is a Lagrangian subalgebra of $\mathfrak{g}\oplus\mathfrak{g}$. Moreover, $\mathfrak{g}[[u]]$
projects onto $\mathrm{diag}(\mathfrak{g})$. Then $\bar{W}$ is transversal to $\mathrm{diag}(\mathfrak{g})$ since $W$ is transversal to $\mathfrak{g}[[u]]$. One can easily check that the correspondence $W\mapsto\bar{W}$ is bijective. This ends the proof.

\end{proof}

\begin{cor}\label{sol_I}
Let $c_1$, $c_2$ be different and non-zero complex constants, and 
\[r_{c_1,c_2}=\sum_{\alpha>0}(c_1e_{-\alpha}\otimes e_{\alpha}+c_2e_{\alpha}\otimes e_{-\alpha}+\frac{c_1+c_2}{4}h_{\alpha}\otimes h_{\alpha}),\]
 
\[r_0(u,v)= \frac{1-(c_1+c_2)u+c_1c_2uv}{v-u}\Omega-r_{c_1,c_2}.\]
Then $r_0(u,v)$ provides a Lie bialgebra structure on $\mathfrak{g}[u]$. 
\end{cor}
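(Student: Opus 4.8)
The plan is to exhibit an explicit Lagrangian subalgebra $W$ of $\mathfrak{g}((u))$, built from a convenient Lagrangian subalgebra of $\mathfrak{g}\oplus\mathfrak{g}$ via the correspondence just established, and then to recognize that the given $r_0(u,v)$ is precisely the $r$-matrix reconstructed from dual bases of $W$ and $\mathfrak{g}[u]$. Concretely, I would take the diagonal-type Lagrangian subalgebra $\bar{W}=\{(x,x): x\in\mathfrak{b}_+\}\oplus\{(x,-x)\cdots\}$ --- more precisely, the Lagrangian complement to $\mathrm{diag}(\mathfrak{g})$ in $\mathfrak{g}\oplus\mathfrak{g}$ that is standard for the trigonometric/Belavin--Drinfeld picture, and pull it back to $W\subseteq\mathfrak{g}[u^{-1}]$ using $\hat{\psi}^{-1}$ from the previous proposition. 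By that proposition $W$ is automatically a bounded Lagrangian subalgebra transversal to $\mathfrak{g}[[u]]$, hence by the Proposition on the one-to-one correspondence it yields a genuine Lie bialgebra structure $\delta$ on $\mathfrak{g}[u]$. So the \emph{existence} of the bialgebra structure is free once $W$ is produced; the content of the corollary is the \emph{explicit formula} for its $r$-matrix.

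The key computation is therefore to verify that the reconstruction recipe in the Remark, namely $r(u,v)=\sum_{\alpha,k}e_{\alpha,k}\otimes w_{\alpha,k}$ where $\{w_{\alpha,k}\}\subseteq W$ is dual to the canonical basis $\{e_{\alpha,k}\}$ of $\mathfrak{g}[u]$ under $Q_{a(u)}$, reproduces exactly $r_0(u,v)$. First I would record that the term $\frac{1}{v-u}\Omega$, with $\Omega=\sum_i x_i\otimes x^i$ the Casimir, is the standard kernel dual to the principal part, and that multiplying the numerator by $1-(c_1+c_2)u+c_1c_2uv$ is exactly the effect of the weight $a(u)=1/(1-c_1u)(1-c_2u)$ entering the residue pairing $Q_{a(u)}(f_1,f_2)=\mathrm{Res}_{u=0}(K(f_1,f_2)a(u))$. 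I would make this precise by expanding $\frac{1-(c_1+c_2)u+c_1c_2uv}{v-u}$ as a series and matching coefficients of $u^k\otimes v^m$ against the pairing conditions $Q_{a(u)}(e_{\alpha,k},w_{\beta,m})=\delta_{\alpha\beta}\delta_{km}$. The finite correction term $r_{c_1,c_2}$ encodes the constant (in $u,v$) part of $W$ coming from the specific Lagrangian complement chosen in $\mathfrak{g}\oplus\mathfrak{g}$; its form with coefficients $c_1,c_2$ on $e_{-\alpha}\otimes e_\alpha$, $e_\alpha\otimes e_{-\alpha}$ and $\frac{c_1+c_2}{4}$ on $h_\alpha\otimes h_\alpha$ should drop out of $\psi(x)=(xc_1,xc_2)$ on the degree-one generators together with the Killing normalization $K(e_\alpha,e_{-\alpha})=1$, $h_\alpha=[e_\alpha,e_{-\alpha}]$.

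I expect the main obstacle to be the bookkeeping that confirms the proposed $W$ (equivalently the proposed $\bar{W}$) is genuinely \emph{closed under the bracket} of $\mathfrak{g}((u))$ and genuinely \emph{Lagrangian}, rather than merely isotropic of the right dimension; while the previous propositions guarantee these properties abstractly for any Lagrangian $\bar{W}$ transversal to $\mathrm{diag}(\mathfrak{g})$, I must check that my explicit $\bar{W}$ really is such a subalgebra. The second delicate point is the residue/duality matching: one has to be careful that the chosen dual basis $w_{\alpha,k}$ lies in $\mathfrak{g}[u^{-1}]$ (so that $W\subseteq\mathfrak{g}[u^{-1}]$ as required) and that the generating function identity holds as an equality of rational functions, not just formal series, which is where the specific quadratic numerator and the constant shift $-r_{c_1,c_2}$ must conspire. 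Once these two verifications are in place, invoking the Proposition gives that $r_0(u,v)$ determines $\delta(f(u))=[f(u)\otimes 1+1\otimes f(v),r_0(u,v)]$ as a Lie bialgebra structure, completing the argument.
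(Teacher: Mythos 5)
Your proposal takes essentially the same route as the paper: the paper chooses exactly the Lagrangian complement you intend, namely $\bar{W_0}=\mathrm{span}\{(e_{-\alpha},0),\,(0,e_{\alpha}),\,(h_{\alpha},-h_{\alpha})\}$ (your first formula for $\bar{W}$ is garbled, but your ``standard trigonometric complement'' clarification points to the right object), pulls it back through $\psi^{-1}$ to $W_0\subseteq\mathfrak{g}[u^{-1}]$, and then verifies that the dual basis of $W_0$ against the canonical basis of $\mathfrak{g}[u]$ under $Q_{a(u)}$ reconstructs precisely $r_0(u,v)$. The only difference is presentational: the paper writes down the dual basis explicitly (e.g.\ $v^{-k}(v^{-1}-c_1)(v^{-1}-c_2)e_{\pm\alpha}$ dual to $e_{\mp\alpha}u^{k+1}$, and $e_{\alpha}(v^{-1}-c_1)$, $e_{-\alpha}(v^{-1}-c_2)$, $\frac{1}{2}h_{\alpha}(v^{-1}-\frac{c_1+c_2}{2})$ dual to the constant generators) and sums the resulting series, rather than matching coefficients of a generating function as you propose; both computations are equivalent.
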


\begin{proof}
Let $\bar{W_0}$ be the Lie subalgebra of $\mathfrak{g}\oplus\mathfrak{g}$ spanned by the pairs $(e_{-\alpha},0)$, $(0,e_{\alpha})$, $(h_{\alpha},-h_{\alpha})$, for all positive roots $\alpha$. This is obviously a Lagrangian subalgebra of $\mathfrak{g}\oplus\mathfrak{g}$ complementary to the diagonal. 
Then the corresponding Lagrangian subalgebra $W_0$ of $\mathfrak{g}((u))$ is spanned by $\psi^{-1}(e_{-\alpha},0)$, $\psi^{-1}(0,e_{\alpha})$, $\psi^{-1}(h_{\alpha},-h_{\alpha})$ and $(u^{-1}-c_1)(u^{-1}-c_2)\mathfrak{g}[u^{-1}]$. We have: $\psi^{-1}(e_{-\alpha},0)=\frac{(u^{-1}-c_2)e_{-\alpha}}{c_1-c_2}$, $\psi^{-1}(0,e_{\alpha})=\frac{(u^{-1}-c_1)e_{\alpha}}{c_2-c_1}$, $\psi^{-1}(h_{\alpha},-h_{\alpha})=\frac{(2u^{-1}-c_1-c_2)h_{\alpha}}{c_1-c_2}$. 

 Let us choose the following basis in $W_0$: $v^{-k}(v^{-1}-c_1)(v^{-1}-c_2)e_{-\alpha}$, $v^{-k}(v^{-1}-c_1)(v^{-1}-c_2)e_{\alpha}$, 
$\frac{1}{2}v^{-k}(v^{-1}-c_1)(v^{-1}-c_2)h_{\alpha}$, for all $k\geq 0$, $e_{\alpha}(v^{-1}-c_1)$, $e_{-\alpha}(v^{-1}-c_2)$, $\frac{1}{2}h_{\alpha}(v^{-1}-\frac{c_1+c_2}{2})$. 
The corresponding dual elements in $\mathfrak{g}[u]$ are respectively $e_{\alpha}u^{k+1}$, $e_{-\alpha}u^{k+1}$, $h_{\alpha}u^{k+1}$, $e_{-\alpha}$,  $e_{\alpha}$, $h_{\alpha}$. 

One can check by a straightforward computation that the $r$-matrix constructed 
from these dual bases is precisely $r_0(u,v)$. 

\end{proof}

Let us note that $r_0(u,v)$ can be rewritten in the following form: 
\[r_0(u,v)=\frac{1-c_1v-c_2u+c_1c_2uv}{v-u}\Omega+(c_1-c_2)r_{\mathrm{DJ}},\] where $r_{\mathrm{DJ}}=\frac{1}{2}(\sum_{\alpha>0}e_{\alpha}\wedge e_{-\alpha}+\Omega)$. 

\begin{thm}
Let $a(u)=1/(1-c_1u)(1-c_2u)$, where $c_1$ and $c_2$ are different non-zero complex numbers.
Let $r(u,v)$ be an $r$-matrix which corresponds to a Lagrangian subalgebra 
$W$ of $\mathfrak{g}((u))$, with respect to the form $Q_{a(u)}$, such that $W \subseteq \mathfrak{g}[u^{-1}]$. Then
 \[r(u,v)=\frac{1-c_1v-c_2u+c_1c_2uv}{v-u}\Omega+(c_1-c_2)r,\]
where $r\in \mathfrak{g}\otimes\mathfrak{g}$ verifies: 
$r+r^{21}=\Omega$ and $\mathrm{CYB}(r)=0$. 
\end{thm}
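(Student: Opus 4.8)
The plan is to reduce the general $r$-matrix to the explicit model $r_0(u,v)$ produced in Corollary~\ref{sol_I}, and then argue that the difference is controlled by an element of $\mathfrak{g}\otimes\mathfrak{g}$ satisfying the stated classical Yang--Baxter conditions. The key structural input is the preceding Proposition, which gives a bijection between Lagrangian subalgebras $W$ of $\mathfrak{g}((u))$ contained in $\mathfrak{g}[u^{-1}]$ and transversal to $\mathfrak{g}[[u]]$, on the one hand, and Lagrangian subalgebras $\bar{W}$ of $\mathfrak{g}\oplus\mathfrak{g}$ transversal to $\mathrm{diag}(\mathfrak{g})$, on the other. I would begin by invoking this correspondence to pass from $W$ to its image $\bar{W}\subseteq\mathfrak{g}\oplus\mathfrak{g}$.

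\textbf{From $\bar W$ to a constant $r$-matrix.} A Lagrangian subalgebra $\bar W$ of $\mathfrak{g}\oplus\mathfrak{g}$ complementary to $\mathrm{diag}(\mathfrak{g})$ is the graph of a linear map, and by the classical Belavin--Drinfeld--type description such a complement encodes precisely a constant tensor $r\in\mathfrak{g}\otimes\mathfrak{g}$ with $r+r^{21}=\Omega$ (the skew part giving the nondegenerate complement, the symmetric part being forced to be the Casimir $\Omega$ by the Lagrangian condition with respect to $\bar Q$) and $\mathrm{CYB}(r)=0$ (equivalent to $\bar W$ being a subalgebra). Indeed, transversality to the diagonal together with the form $\bar Q((x_1,y_1),(x_2,y_2))=K(x_1,x_2)-K(y_1,y_2)$ is exactly the setup in which Lagrangian complements of the diagonal correspond to solutions of CYBE normalized by $r+r^{21}=\Omega$; the model subalgebra $\bar W_0$ from Corollary~\ref{sol_I} is the special case yielding $r_{\mathrm{DJ}}$. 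I would make this dictionary precise and read off the constant $r$ attached to our given $\bar W$.

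\textbf{Transporting back along $\psi^{-1}$.} With $r$ in hand, I would reconstruct the $r$-matrix on $\mathfrak{g}[u]$ by the explicit dual-basis recipe of the earlier Remark, exactly as carried out for the model case in Corollary~\ref{sol_I}. The point is that the reconstruction is \emph{affine} in the data: the computation in Corollary~\ref{sol_I} shows that the model subalgebra $W_0$ (image $\bar W_0$) yields $r_0(u,v)=\frac{1-c_1v-c_2u+c_1c_2uv}{v-u}\Omega+(c_1-c_2)r_{\mathrm{DJ}}$. For a general $\bar W$, the part of the dual basis coming from $(u^{-1}-c_1)(u^{-1}-c_2)\mathfrak{g}[u^{-1}]$ contributes the same Casimir term $\frac{1-c_1v-c_2u+c_1c_2uv}{v-u}\Omega$ independently of $\bar W$, since that ideal is common to all such $W$; only the finitely many generators spanning $W/\bigl((u^{-1}-c_1)(u^{-1}-c_2)\mathfrak{g}[u^{-1}]\bigr)\cong\bar W$ change. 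The contribution of those generators, pulled back through $\psi^{-1}$ and paired against the dual basis in $\mathfrak{g}[u]$, is precisely $(c_1-c_2)$ times the constant tensor $r$ determined by $\bar W$. I would therefore compute this finite-dimensional contribution, tracking the scalars introduced by $\psi^{-1}$ (the same denominators $c_1-c_2$, $c_2-c_1$ appearing in Corollary~\ref{sol_I}), and check that they assemble into $(c_1-c_2)r$ with the claimed normalization.

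\textbf{Main obstacle.} The principal difficulty I anticipate is not the Casimir term, which is rigid, but verifying that the constant correction has \emph{exactly} the coefficient $c_1-c_2$ and reproduces $r$ faithfully (rather than some rescaling or twist of it), and simultaneously that the two CYBE conditions on the infinite-dimensional object $W$ descend to the two stated conditions $r+r^{21}=\Omega$ and $\mathrm{CYB}(r)=0$ on the finite-dimensional $r$. Concretely, one must confirm that the Lagrangian property of $W$ under $Q_{a(u)}$ is equivalent, under $\hat\psi$, to the Lagrangian property of $\bar W$ under $\bar Q$ (this follows from the Proposition), and that the \emph{closure under bracket} of $W$ is equivalent to $\mathrm{CYB}(r)=0$. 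The cleanest route is to note that an $r$-matrix of the form $\frac{p(u,v)}{v-u}\Omega + (\text{constant})$ defines a Lie bialgebra cobracket if and only if the constant tensor solves CYBE with the skew-symmetry normalization forced by the residue of the rational part; so I would verify the residue of $\frac{1-c_1v-c_2u+c_1c_2uv}{v-u}\Omega$ along $v=u$ equals $\Omega$ (forcing $r+r^{21}=\Omega$) and that the remaining CYBE for the full $r(u,v)$ collapses, after the rational terms cancel, to $\mathrm{CYB}(r)=0$ for the constant part.
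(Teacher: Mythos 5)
Your proposal follows essentially the same route as the paper's own proof: both pass from $W$ to the Lagrangian subalgebra $\bar{W}\subseteq\mathfrak{g}\oplus\mathfrak{g}$ via the preceding Proposition, invoke the known one-to-one correspondence (the paper cites \cite{RS, S1}) between such complements of $\mathrm{diag}(\mathfrak{g})$ and constant tensors with $r+r^{21}=\Omega$, $\mathrm{CYB}(r)=0$, and conclude that the non-constant part of $r(u,v)$ must coincide with that of $r_0(u,v)$ from Corollary \ref{sol_I}, leaving only a constant correction determined by $\bar{W}$. Your dual-basis bookkeeping merely makes explicit the rigidity step that the paper asserts in a single sentence, so the two arguments are the same in substance.
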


\begin{proof}
Since $W$ is a Lagrangian subalgebra of $\mathfrak{g}((u))$ which is transversal to $\mathfrak{g}[[u]]$ and $W \subseteq \mathfrak{g}[u^{-1}]$, it is uniquely defined by a Lagrangian subalgebra $\bar{W}$ of $\mathfrak{g}\oplus \mathfrak{g}$
transversal to $\mathrm{diag}(\mathfrak{g})$. On the other hand, Lagrangian subalgebras with this property are in a one-to-one correspondence with 
solutions of the modified classical Yang--Baxter equation, i.e., $r+r^{21}=\Omega$ and $\mathrm{CYB}(r)=0$ (see \cite{RS, S1}). If $r(u,v)$ corresponds to $W$, then it is uniquely defined by a constant $r$-matrix and the non-constant part 
of $r(u,v)$ is given by the same formula as $r_0(u,v)$. This ends the proof. 
\end{proof}

Consider the Lie algebra $\mathfrak{g}+\varepsilon\mathfrak{g}$ endowed with the
following invariant form: 
$\bar{Q}_{\varepsilon}(x_1+\varepsilon x_2,y_1+\varepsilon y_2)=K(x_1,y_2)+K(x_2,y_1)$. 

\begin{prop}
Let $a(u)=1/(1-u)^2$. There exists a one-to-one correspondence between Lagrangian subalgebras $W$ of $\mathfrak{g}((u))$, with respect to $Q_{a(u)}$, which are transversal to $\mathfrak{g}[[u]]$ and satisfy 
$W \subseteq \mathfrak{g}[u^{-1}]$, and Lagrangian subalgebras in $\mathfrak{g}+\varepsilon\mathfrak{g}$, with respect to $\bar{Q}_{\varepsilon}$, transversal to $\mathfrak{g}$. 
\end{prop}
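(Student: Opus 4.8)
The plan is to mimic the proof of the preceding proposition, treating the present situation as the confluent limit $c_1=c_2=1$ of the case of two distinct roots. First I would record that $W\subseteq\mathfrak{g}[u^{-1}]$ forces $W\supseteq\mathfrak{g}[u^{-1}]^{\perp}$, the orthogonal complement being taken with respect to $Q_{a(u)}$. Since $Q_{a(u)}(f_1,f_2)=\mathrm{Res}_{u=0}(K(f_1,f_2)/(1-u)^2)$, and the residue pairing for $a(u)=1$ satisfies $\mathfrak{g}[u^{-1}]^{\perp}=u^{-2}\mathfrak{g}[u^{-1}]$, multiplying by $a(u)^{-1}=(1-u)^2$ gives
\[
\mathfrak{g}[u^{-1}]^{\perp}=(1-u)^2u^{-2}\mathfrak{g}[u^{-1}]=(u^{-1}-1)^2\mathfrak{g}[u^{-1}],
\]
which is exactly the $c_1=c_2=1$ specialization of the formula in the distinct-roots case.

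Next I would build the quotient. The key algebraic observation is that $\mathbb{C}[u^{-1}]/((u^{-1}-1)^2)$ is isomorphic to the algebra of dual numbers $\mathbb{C}[\varepsilon]/(\varepsilon^2)$ via $u^{-1}\mapsto 1+\varepsilon$; tensoring with $\mathfrak{g}$ identifies $\mathfrak{g}[u^{-1}]/(u^{-1}-1)^2\mathfrak{g}[u^{-1}]$ with $\mathfrak{g}+\varepsilon\mathfrak{g}$. Concretely I would define the Lie algebra epimorphism $\psi:\mathfrak{g}[u^{-1}]\to\mathfrak{g}+\varepsilon\mathfrak{g}$ by $\psi(xu^{-k})=x(1+\varepsilon)^k=x+k\varepsilon x$, and check that $\ker\psi=(u^{-1}-1)^2\mathfrak{g}[u^{-1}]=\mathfrak{g}[u^{-1}]^{\perp}$. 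That $\psi$ respects brackets is immediate, since it is induced by a $\mathbb{C}$-algebra map on the scalars. Setting $\bar{W}=\psi(W)$, the inclusions $\mathfrak{g}[u^{-1}]^{\perp}\subseteq W\subseteq\mathfrak{g}[u^{-1}]$ make $\bar{W}$ a well-defined subalgebra of $\mathfrak{g}+\varepsilon\mathfrak{g}$.

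The step I expect to be the crux is verifying that $Q_{a(u)}$ descends through $\psi$ to precisely $\bar{Q}_{\varepsilon}$. For this I would compute on generators: writing $\psi(au^{-m})=a+m\varepsilon a$ and $\psi(bu^{-n})=b+n\varepsilon b$, a direct residue computation gives $Q_{a(u)}(au^{-m},bu^{-n})=(m+n)K(a,b)$ for $m+n\geq 1$ and $0$ for $m=n=0$, matching $\bar{Q}_{\varepsilon}(a+m\varepsilon a,b+n\varepsilon b)=K(a,nb)+K(ma,b)=(m+n)K(a,b)$ exactly. This simultaneously shows that the form is well defined on the quotient (it vanishes on $\mathfrak{g}[u^{-1}]^{\perp}$), that the induced form is $\bar{Q}_{\varepsilon}$, and that $\bar{Q}_{\varepsilon}$ is nondegenerate.

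Finally I would transport the two defining properties of $W$ through $\psi$. Since $\psi$ restricts to the identity on constants, the constants $\mathfrak{g}\subseteq\mathfrak{g}[[u]]\cap\mathfrak{g}[u^{-1}]$ map onto the subalgebra $\mathfrak{g}\subseteq\mathfrak{g}+\varepsilon\mathfrak{g}$. Projecting the decomposition $\mathfrak{g}((u))=W\oplus\mathfrak{g}[[u]]$ onto $\mathfrak{g}[u^{-1}]$ along $u\mathfrak{g}[[u]]$ yields $\mathfrak{g}[u^{-1}]=W\oplus\mathfrak{g}$, whence $\bar{W}\oplus\mathfrak{g}=\mathfrak{g}+\varepsilon\mathfrak{g}$, i.e.\ $\bar{W}$ is transversal to $\mathfrak{g}$. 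Transversality forces $\dim\bar{W}=\dim\mathfrak{g}$, so the isotropic subalgebra $\bar{W}$ is Lagrangian. Conversely, given a Lagrangian subalgebra of $\mathfrak{g}+\varepsilon\mathfrak{g}$ transversal to $\mathfrak{g}$, its preimage under $\psi$ is a subalgebra containing $\ker\psi=\mathfrak{g}[u^{-1}]^{\perp}$, contained in $\mathfrak{g}[u^{-1}]$, Lagrangian for $Q_{a(u)}$, and transversal to $\mathfrak{g}[[u]]$; these two assignments are mutually inverse, which gives the claimed bijection.
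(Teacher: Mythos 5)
Your proof is correct and follows essentially the same route as the paper's: the same orthogonality computation $\mathfrak{g}[u^{-1}]^{\perp}=(u^{-1}-1)^2\mathfrak{g}[u^{-1}]$, the same quotient map $\psi$ determined by $u^{-1}\mapsto 1+\varepsilon$ (your formula $\psi(xu^{-k})=x+k\varepsilon x$ is just the multiplicative extension of the paper's definition on generators), and the same transversality and bijectivity arguments. The only difference is that you spell out the residue computation showing $Q_{a(u)}$ descends to $\bar{Q}_{\varepsilon}$, which the paper leaves as an unstated verification.
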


\begin{proof}
Assume $W$ is a Lagrangian subalgebra of $\mathfrak{g}((u))$ which is transversal to $\mathfrak{g}[[u]]$ and such that $W \subseteq \mathfrak{g}[u^{-1}]$. This implies that  
$W \supseteq \mathfrak{g}[u^{-1}]^{\perp}=(1-u)^2u^{-2}\mathfrak{g}[u^{-1}]=
(u^{-1}-1)^2\mathfrak{g}[u^{-1}]$. 

The quotient $\frac{W}{(u^{-1}-1)^2\mathfrak{g}[u^{-1}]}$ is therefore a
subalgebra of  the Lie algebra $\frac{\mathfrak{g}[u^{-1}]}{(u^{-1}-1)^2\mathfrak{g}[u^{-1}]}$. 

On the other hand the Lie algebra $\frac{\mathfrak{g}[u^{-1}]}{(u^{-1}-1)^2\mathfrak{g}[u^{-1}]}$ can be identified with $\mathfrak{g}+\varepsilon\mathfrak{g}$. Indeed let $\psi:\mathfrak{g}[u^{-1}]\longrightarrow \mathfrak{g}+\varepsilon\mathfrak{g}$ be given by $\psi(x)=x$, $\psi(xu^{-1})=x(1+\varepsilon)$, for all $x\in\mathfrak{g}$. Then $\psi$ is an epimorphism whose kernel equals $(u^{-1}-1)^2\mathfrak{g}[u^{-1}]$. 

The image $\bar{W}$ of $W$ in $\mathfrak{g}+\varepsilon\mathfrak{g}$ is obviously a Lagrangian subalgebra transversal to $\mathfrak{g}$ (we also note that $\mathfrak{g}[[u]]\cap \mathfrak{g}[u^{-1}]=\mathfrak{g}$). One can check that the correspondence which associates $\bar{W}$ to $W$ is bijective. 
\end{proof}

\begin{cor}
Let  
\[r_0(u,v)=\frac{(u-1)(v-1)}{v-u} \Omega.\]
Then $r_0(u,v)$ provides a Lie bialgebra structure on $\mathfrak{g}[u]$. 
\end{cor}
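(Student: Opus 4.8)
The plan is to mirror the argument used for the Case~I corollary (Corollary~\ref{sol_I}): I will exhibit a convenient Lagrangian subalgebra on the ``double'' side $\mathfrak{g}+\varepsilon\mathfrak{g}$, transport it through the correspondence of the preceding Proposition to a Lagrangian $W_0\subseteq\mathfrak{g}[u^{-1}]$, and then reconstruct the corresponding $r$-matrix from explicit dual bases via the recipe in the Remark, checking that it equals $r_0(u,v)$.

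First I would take the candidate $\bar{W}_0=\varepsilon\mathfrak{g}$. I would verify it is Lagrangian and transversal to $\mathfrak{g}$: it is isotropic because $\bar{Q}_{\varepsilon}(\varepsilon x,\varepsilon y)=K(0,y)+K(x,0)=0$, it has dimension $\dim\mathfrak{g}$ (half of $\mathfrak{g}+\varepsilon\mathfrak{g}$), it is an abelian subalgebra since $[\varepsilon x,\varepsilon y]=\varepsilon^2[x,y]=0$, and $\mathfrak{g}\cap\varepsilon\mathfrak{g}=0$. By the preceding Proposition, $\bar{W}_0$ corresponds to a unique Lagrangian $W_0\subseteq\mathfrak{g}[u^{-1}]$ transversal to $\mathfrak{g}[[u]]$. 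To make $W_0$ explicit I use $\psi(xu^{-1}-x)=x(1+\varepsilon)-x=\varepsilon x$, so that $\psi^{-1}(\varepsilon\mathfrak{g})$ together with the kernel $(u^{-1}-1)^2\mathfrak{g}[u^{-1}]$ yields $W_0=(u^{-1}-1)\mathfrak{g}[u^{-1}]$.

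The core of the proof is the dual-basis computation. I would take the canonical basis $x_\mu u^l$ ($l\geq 0$) of $\mathfrak{g}[u]$, where $x_\mu$ runs over the Chevalley basis. Since $Q_{a(u)}$ factorizes as $K\otimes B$ with $B(f,g)=\mathrm{Res}_{u=0}\bigl(f(u)g(u)/(1-u)^2\bigr)$, the element dual to $x_\mu u^l$ has the form $x^\mu\,w_l(v)$, where $x^\mu$ is the $K$-dual of $x_\mu$ and $\{w_l\}$ is the $B$-dual of $\{u^l\}$ inside the scalar subspace $(v^{-1}-1)\mathbb{C}[v^{-1}]$. Computing $B(u^m,(v^{-1}-1)v^{-k})$ gives a triangular system whose solution is $w_0=v^{-1}-1$ and $w_l=(1-v)^2 v^{-l-1}$ for $l\geq 1$; one checks directly that these indeed lie in $(v^{-1}-1)\mathbb{C}[v^{-1}]$. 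The $r$-matrix is then
\[
r(u,v)=\sum_{\mu,l}(x_\mu u^l)\otimes(x^\mu w_l(v))=\Bigl(\sum_{l\geq 0}u^l w_l(v)\Bigr)\Omega,
\]
and summing the generating function (a geometric series in $u/v$, after separating the $l=0$ term) and simplifying produces $\sum_{l\geq 0}u^l w_l(v)=\dfrac{(1-u)(1-v)}{v-u}=\dfrac{(u-1)(v-1)}{v-u}$, so that $r(u,v)=r_0(u,v)$, which establishes the claim.

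The main obstacle is this closed-form summation: the boundary term $w_0$ has a different shape from the generic $w_l$, so it must be handled separately and then shown to combine with the geometric series to collapse exactly to the stated rational function. The accompanying checks, that the computed dual elements genuinely belong to $(v^{-1}-1)\mathbb{C}[v^{-1}]$ and that they form a basis of the scalar part of $W_0$, are routine but necessary to justify that the sum is indeed the reconstructed $r$-matrix.
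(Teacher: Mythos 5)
Your proposal is correct and follows essentially the same route as the paper: take $\bar{W}_0=\varepsilon\mathfrak{g}$, pull it back through $\psi$ to $W_0=(u^{-1}-1)\mathfrak{g}[u^{-1}]$, and reconstruct the $r$-matrix from dual bases, which you then sum explicitly to get $\frac{(u-1)(v-1)}{v-u}\Omega$. Your dual elements $w_0=v^{-1}-1$ and $w_l=(1-v)^2v^{-l-1}$ ($l\geq 1$) are the correct ones; they agree with the paper's stated dual basis up to what appears to be a typographical slip there (the paper writes $(u^{-1}-1)^2u^{-k-2}$ where $(1-u)^2u^{-k-2}=(u^{-1}-1)^2u^{-k}$ is meant), and your verification of the residue pairings and of the closed-form summation supplies the ``easy computation'' the paper omits.
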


\begin{proof}
Take $\bar{W_0}=\varepsilon\mathfrak{g}$ with canonical basis $\varepsilon e_{\alpha}$, $\varepsilon e_{-\alpha}$, $\varepsilon h_{\alpha}$, for all positive roots $\alpha$. 

Let  $W_0$ be the Lagrangian subalgebra of $\mathfrak{g}((u))$ which corresponds to $\bar{W_0}$. Then $\frac{W_0}{(u^{-1}-1)^2\mathfrak{g}[u^{-1}]}$ is spanned by $(u^{-1}-1)e_{\alpha}$, $(u^{-1}-1)e_{-\alpha}$ and $(u^{-1}-1)h_{\alpha}$. Therefore the Lie algebra $W_0$ is spanned by these elements together with $(u^{-1}-1)^2u^{-k}e_{\alpha}$, $(u^{-1}-1)^2u^{-k}e_{-\alpha}$, $(u^{-1}-1)^2u^{-k}h_{\alpha}$, for all positive roots $\alpha$ and all natural $k$.

The basis in $W_0$ which is dual to the canonical basis in $\mathfrak{g}[u]$ is 
$(u^{-1}-1)e_{\alpha}$, $(u^{-1}-1)e_{-\alpha}$, $(u^{-1}-1)h_{\alpha}$, $(u^{-1}-1)^2u^{-k-2}e_{\alpha}$, $(u^{-1}-1)^2u^{-k-2}e_{-\alpha}$, $(u^{-1}-1)^2u^{-k-2}h_{\alpha}$. An easy computation shows that the $r$-matrix constructed from the dual bases
has the form $r_0(u,v)$. 

\end{proof}

\begin{thm}
Let $a(u)=1/(1-u)^2$ and $r(u,v)$ be an $r$-matrix which corresponds to a Lagrangian subalgebra $W$ of $\mathfrak{g}((u))$, with respect to the form $Q_{a(u)}$, such that $W \subseteq \mathfrak{g}[u^{-1}]$. Then
\[r(u,v)= \frac{(u-1)(v-1)}{v-u} \Omega+r,\]
where $r\in\mathfrak{g}\wedge\mathfrak{g}$ verifies the classical Yang--Baxter equation $\mathrm{CYB}(r)=0$. 
\end{thm}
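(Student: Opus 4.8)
The plan is to follow exactly the strategy of the preceding theorem for the case $a(u)=1/(1-c_1u)(1-c_2u)$, with the pair $(\mathfrak{g}\oplus\mathfrak{g},\mathrm{diag}(\mathfrak{g}))$ replaced by $(\mathfrak{g}+\varepsilon\mathfrak{g},\mathfrak{g})$. First I would invoke the preceding Proposition: since $W$ is a Lagrangian subalgebra of $\mathfrak{g}((u))$, transversal to $\mathfrak{g}[[u]]$ and contained in $\mathfrak{g}[u^{-1}]$, it is uniquely determined by a Lagrangian subalgebra $\bar{W}$ of $\mathfrak{g}+\varepsilon\mathfrak{g}$, with respect to $\bar{Q}_{\varepsilon}$, transversal to $\mathfrak{g}$. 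It then suffices to identify such subalgebras $\bar{W}$ with constant $r$-matrices and to trace how the reconstruction of the preceding Corollary changes when $\bar{W}$ is varied.

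The next step is the identification of $\bar{W}$ with an element of $\mathfrak{g}\wedge\mathfrak{g}$. Because $\bar{W}$ is transversal to $\mathfrak{g}$, the projection $\mathfrak{g}+\varepsilon\mathfrak{g}\to\varepsilon\mathfrak{g}$ restricts to an isomorphism on $\bar{W}$, so $\bar{W}=\{R(x)+\varepsilon x:x\in\mathfrak{g}\}$ is the graph of a linear operator $R\colon\mathfrak{g}\to\mathfrak{g}$. A direct computation shows that $\bar{W}$ is isotropic for $\bar{Q}_{\varepsilon}$, namely $K(R(x),y)+K(x,R(y))=0$, exactly when $R$ is skew-symmetric with respect to $K$, i.e. when $R$ is the operator attached to some $r\in\mathfrak{g}\wedge\mathfrak{g}$; and that $\bar{W}$ is closed under the bracket exactly when $[R(x),R(y)]=R([R(x),y]+[x,R(y)])$, which is the operator form of the classical Yang--Baxter equation and is equivalent to $\mathrm{CYB}(r)=0$. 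This is the standard bijection between Lagrangian subalgebras of $\mathfrak{g}+\varepsilon\mathfrak{g}$ complementary to $\mathfrak{g}$ and triangular $r$-matrices, which one may either verify directly or cite from \cite{RS, S1}; I would cite it, as in the previous theorem.

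Finally I would reconstruct $r(u,v)$ from $\bar{W}$. The inclusion $W\supseteq(u^{-1}-1)^2\mathfrak{g}[u^{-1}]$ from the preceding Proposition splits the dual basis of $W$ into a part lying in the ideal $(u^{-1}-1)^2\mathfrak{g}[u^{-1}]$, common to every admissible $W$, and a part representing the three cosets in $\frac{\mathfrak{g}[u^{-1}]}{(u^{-1}-1)^2\mathfrak{g}[u^{-1}]}\cong\mathfrak{g}+\varepsilon\mathfrak{g}$ and hence governed by $\bar{W}$. By the explicit dual-basis computation of the preceding Corollary, the first part produces precisely the non-constant term $\frac{(u-1)(v-1)}{v-u}\Omega$, independently of $\bar{W}$; the second part, after passing to dual bases, contributes exactly the constant tensor $r\in\mathfrak{g}\wedge\mathfrak{g}$ associated to $\bar{W}$. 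Combining these gives $r(u,v)=\frac{(u-1)(v-1)}{v-u}\Omega+r$ with $\mathrm{CYB}(r)=0$.

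The hard part will be this last step: verifying that replacing the base subalgebra $\bar{W_0}=\varepsilon\mathfrak{g}$ (for which $R=0$ and one recovers the pure $r_0(u,v)$ of the Corollary) by the graph of a nonzero skew operator $R$ perturbs the dual basis of $W$ only by a constant element, whose associated tensor is precisely $r$, with no further $u$- or $v$-dependent corrections. I expect this to follow by repeating the dual-basis computation of the Corollary with the shifted coset representatives $R(e_{\alpha})+\varepsilon e_{\alpha}$, $R(e_{-\alpha})+\varepsilon e_{-\alpha}$, $R(h_{\alpha})+\varepsilon h_{\alpha}$ in place of $\varepsilon e_{\alpha}$, $\varepsilon e_{-\alpha}$, $\varepsilon h_{\alpha}$; the delicate point is the bookkeeping showing that the correction collapses exactly to the constant $r$ and reproduces the factor matching $r_{\mathrm{DJ}}$-type normalizations used earlier.
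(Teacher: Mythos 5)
Your proposal is correct and follows essentially the same route as the paper: reduce via the preceding proposition to a Lagrangian subalgebra $\bar{W}$ of $\mathfrak{g}+\varepsilon\mathfrak{g}$ transversal to $\mathfrak{g}$, identify such subalgebras with skew-symmetric solutions of $\mathrm{CYB}(r)=0$ (the paper simply cites \cite{S1} for this correspondence, which you verify via the graph-of-$R$ argument), and observe that the non-constant part of $r(u,v)$ coincides with $r_0(u,v)$. The dual-basis bookkeeping you flag as the ``hard part'' is left implicit in the paper's two-sentence proof, and your expectation is right: replacing $\varepsilon x$ by $R(x)+\varepsilon x$ changes the dual basis only in the degree-zero slots, contributing exactly the constant tensor $r$.
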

\begin{proof}
Since $W$ is a Lagrangian subalgebra of $\mathfrak{g}((u))$ which is transversal to $\mathfrak{g}[[u]]$ and $W \subseteq \mathfrak{g}[u^{-1}]$, it is uniquely defined by a Lagrangian subalgebra $\bar{W}$ of 
$\mathfrak{g}+\varepsilon\mathfrak{g}$
transversal to $\mathfrak{g}$. On the other hand, Lagrangian subalgebras $\bar{W}$ with this property are in a one-to-one correspondence with skew-symmetric solutions of the classical Yang--Baxter equation (see \cite{S1}). 

\end{proof}

\begin{prop}
Let $a(u)=1/1-u$. There exists a one-to-one correspondence between Lagrangian subalgebras $W$ of $\mathfrak{g}((u))$, with respect to $Q_{a(u)}$, which are transversal to $\mathfrak{g}[[u]]$ and satisfy 
$W \subseteq \mathfrak{g}[u^{-1}]$, and Lagrangian subalgebras in $\mathfrak{g}\oplus\mathfrak{g}$ transversal to $\mathrm{diag}(\mathfrak{g})$. 

\end{prop}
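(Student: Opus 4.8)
The plan is to mirror the proof of the analogous statement for $a(u)=1/(1-c_1u)(1-c_2u)$, specialized to the degenerate situation in which one of the two factors of $1/a(u)$ is $u$ itself. Since $\frac{1}{a(u)}=1-u$ has degree one, I would first compute the orthogonal complement of $\mathfrak{g}[u^{-1}]$ with respect to $Q_{a(u)}$. Using $Q_{a(u)}(xu^{-i},yu^{-j})=K(x,y)\cdot\mathrm{Res}_{u=0}(u^{-i-j}a(u))$ together with $a(u)=\sum_{k\geq 0}u^k$, a short residue computation shows that $g$ is orthogonal to all of $\mathfrak{g}[u^{-1}]$ precisely when $g\,a(u)\in u^{-2}\mathfrak{g}[u^{-1}]$, that is, $\mathfrak{g}[u^{-1}]^{\perp}=(1-u)u^{-2}\mathfrak{g}[u^{-1}]=u^{-1}(u^{-1}-1)\mathfrak{g}[u^{-1}]$, which is contained in $\mathfrak{g}[u^{-1}]$. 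As $W$ is Lagrangian and $W\subseteq\mathfrak{g}[u^{-1}]$, passing to orthogonals gives $W\supseteq\mathfrak{g}[u^{-1}]^{\perp}$.

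Next I would identify the quotient. The ideal $u^{-1}(u^{-1}-1)\mathfrak{g}[u^{-1}]$ is the kernel of the evaluation map $\psi\colon\mathfrak{g}[u^{-1}]\to\mathfrak{g}\oplus\mathfrak{g}$ defined by $\psi(x)=(x,x)$ and $\psi(xu^{-1})=(0,x)$, i.e.\ evaluation of $x(u^{-1})$ at $u^{-1}=0$ and at $u^{-1}=1$. Because $u^{-1}(u^{-1}-1)$ has two distinct roots, $\psi$ is a surjective Lie algebra homomorphism and induces an isomorphism $\hat{\psi}\colon\mathfrak{g}[u^{-1}]/\mathfrak{g}[u^{-1}]^{\perp}\xrightarrow{\ \sim\ }\mathfrak{g}\oplus\mathfrak{g}$, exactly as in the $c_1\neq c_2$ case with $c_1=0,\ c_2=1$. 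Setting $\bar{W}=\psi(W)$, it is a subalgebra (image of a subalgebra containing the kernel), and I would check it is Lagrangian by evaluating the descended form on the representatives $a+(b-a)u^{-1}$ of $(a,b)$; this produces $K(b_1,b_2)-K(a_1,a_2)$, a nonzero scalar multiple of the split form $\bar{Q}$, so the Lagrangian condition transfers.

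For transversality, from $W\oplus\mathfrak{g}[[u]]=\mathfrak{g}((u))$ and $W\subseteq\mathfrak{g}[u^{-1}]$ I would deduce $\mathfrak{g}[u^{-1}]=W\oplus\mathfrak{g}$, where $\mathfrak{g}$ denotes the constants and I use $\mathfrak{g}[[u]]\cap\mathfrak{g}[u^{-1}]=\mathfrak{g}$; applying $\psi$ and noting $\psi(\mathfrak{g})=\mathrm{diag}(\mathfrak{g})$ gives $\mathfrak{g}\oplus\mathfrak{g}=\bar{W}\oplus\mathrm{diag}(\mathfrak{g})$. Conversely, the inverse assignment sends a Lagrangian $\bar{W}$ transversal to $\mathrm{diag}(\mathfrak{g})$ to $W=\psi^{-1}(\bar{W})$, which automatically satisfies $\mathfrak{g}[u^{-1}]^{\perp}\subseteq W\subseteq(\mathfrak{g}[u^{-1}]^{\perp})^{\perp}=\mathfrak{g}[u^{-1}]$; since forming orthogonals commutes with passage to the quotient by the radical, $W=W^{\perp}$ follows from $\bar{W}=\bar{W}^{\perp}$, and transversality is the reverse of the argument just given, so $W\mapsto\bar{W}$ is a bijection.

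The step I expect to be the main obstacle is not conceptual but computational: getting $\mathfrak{g}[u^{-1}]^{\perp}$ right in this degree-one case (where one ``root'' of $1/a(u)$ sits at $u^{-1}=0$) and verifying that the form induced on $\mathfrak{g}\oplus\mathfrak{g}$ is, up to a nonzero scalar, the standard split form $\bar{Q}$. Once the evaluation map $\psi$ and the induced form are pinned down, every remaining step is formally identical to the two preceding propositions.
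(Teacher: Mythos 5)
Your proposal is correct and follows essentially the same route as the paper: both compute $\mathfrak{g}[u^{-1}]^{\perp}=u^{-1}(u^{-1}-1)\mathfrak{g}[u^{-1}]$, pass to the quotient via the same evaluation map $\psi(x)=(x,x)$, $\psi(xu^{-1})=(0,x)$, and transfer the Lagrangian and transversality conditions to $\mathfrak{g}\oplus\mathfrak{g}$. Your write-up is in fact more detailed than the paper's (which omits the verification that the descended form is $\pm\bar{Q}$, the double-orthogonal identity $(\mathfrak{g}[u^{-1}]^{\perp})^{\perp}=\mathfrak{g}[u^{-1}]$, and the bijectivity check), and all of those added steps are sound.
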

\begin{proof}
Since $W$ is Lagrangian and contained in $\mathfrak{g}[u^{-1}]$, we have $W \supseteq \mathfrak{g}[u^{-1}]^{\perp}=u^{-2}(1-u)\mathfrak{g}[u^{-1}]=u^{-1}(1-u^{-1})\mathfrak{g}[u^{-1}]$. 

Then $\frac{W}{u^{-1}(1-u^{-1})\mathfrak{g}[u^{-1}]}$ is a subalgebra of $\frac{\mathfrak{g}[u^{-1}]}{u^{-1}(1-u^{-1})\mathfrak{g}[u^{-1}]}$. 

On the other hand, $\frac{\mathfrak{g}[u^{-1}]}{u^{-1}(1-u^{-1})\mathfrak{g}[u^{-1}]}$ is isomorphic to $\mathfrak{g}\oplus\mathfrak{g}$ via a morphism $\psi(x)=(x,x)$, $\psi(xu^{-1})=(0,x)$, for all $x\in\mathfrak{g}$. The projection of  $W$ onto  $\mathfrak{g}\oplus\mathfrak{g}$ becomes a Lagrangian subalgebra which is complementary to the diagonal. 
\end{proof}

\begin{cor}
Let \[r_0(u,v)=\frac{1-u}{v-u} \Omega-r_{DJ},\]
where 
$r_{\mathrm{DJ}}=\frac{1}{2}(\sum_{\alpha>0}e_{\alpha}\wedge e_{-\alpha}+\Omega)$.
Then $r_0(u,v)$ provides a Lie bialgebra structure on $\mathfrak{g}[u]$. 
\end{cor}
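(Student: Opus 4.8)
The plan is to proceed exactly as in Corollary \ref{sol_I}: exhibit a concrete Lagrangian complement to the diagonal in $\mathfrak{g}\oplus\mathfrak{g}$, transport it through the preceding Proposition to a bounded Lagrangian subalgebra $W_0\subseteq\mathfrak{g}[u^{-1}]$ of $\mathfrak{g}((u))$ transversal to $\mathfrak{g}[[u]]$, and then show that the $r$-matrix produced from dual bases (as in the Remark) is precisely $r_0(u,v)$. For the complement I would take the same subalgebra as in Corollary \ref{sol_I}, namely $\bar{W}_0=\mathrm{span}\{(e_{-\alpha},0),(0,e_{\alpha}),(h_{\alpha},-h_{\alpha}):\alpha>0\}$, which is Lagrangian with respect to $\bar{Q}$ and complementary to $\mathrm{diag}(\mathfrak{g})$. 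By the preceding Proposition this already guarantees that the corresponding $W_0$ yields a Lie bialgebra structure on $\mathfrak{g}[u]$, so the whole content of the statement is the explicit identification of the associated $r$-matrix.

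Next I would lift $\bar{W}_0$ through the inverse of $\psi$, where for $(a,b)\in\mathfrak{g}\oplus\mathfrak{g}$ a representative is $\psi^{-1}(a,b)=a+(b-a)v^{-1}$. This gives the representatives $e_{-\alpha}(1-v^{-1})$, $e_{\alpha}v^{-1}$, $h_{\alpha}(1-2v^{-1})$, and $W_0$ is spanned by these three families together with the kernel $v^{-1}(1-v^{-1})\mathfrak{g}[v^{-1}]$. To find the dual basis I would use that, for $a(u)=1/(1-u)$,
\[Q_{a(u)}(xu^{l},yu^{-j})=K(x,y)\,\mathrm{Res}_{u=0}\Big(\frac{u^{l-j}}{1-u}\Big),\]
which equals $K(x,y)$ when $j\geq l+1$ and vanishes otherwise. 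Writing a candidate dual vector in an orthonormal-root bookkeeping as $w=X\sum_{j}d_{j}v^{-j}$, the duality conditions $Q_{a(u)}(Xu^{l},w)=\delta_{lk}$ reduce to the tail-sum relations $\sum_{j\geq l+1}d_{j}=\delta_{lk}$, which force $d_{k}=-1$, $d_{k+1}=1$; the membership condition for $W_0$ (vanishing at $v^{-1}=0$ for the $e_{\alpha}$-slot, at $v^{-1}=1$ for the $e_{-\alpha}$-slot, and $P(0)+P(1)=0$ for the Cartan slot, where $P$ is the coefficient polynomial) then pins down the remaining constant $d_{0}$. Carrying this out yields dual vectors $e_{-\alpha}(v^{-k-1}-v^{-k})$ dual to $e_{\alpha}u^{k}$; $e_{\alpha}(v^{-k-1}-v^{-k})$ for $k\geq1$ and $e_{\alpha}v^{-1}$ for $k=0$ dual to $e_{-\alpha}u^{k}$; and, in an orthonormal basis $\{H_i\}$ of $\mathfrak{h}$, $H_i(v^{-k-1}-v^{-k})$ for $k\geq1$ together with $H_i(v^{-1}-\tfrac12)$ for $k=0$.

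Finally I would assemble $r(u,v)=\sum e_{\alpha,k}\otimes w_{\alpha,k}$ and sum the resulting geometric series in $u/v$. The off-diagonal sums collapse to $\tfrac{1-u}{v-u}$ on the $e_{-\alpha}\otimes e_{\alpha}$ slot and $\tfrac{1-v}{v-u}$ on the $e_{\alpha}\otimes e_{-\alpha}$ slot, while the Cartan sum collapses to $\tfrac{2-u-v}{2(v-u)}$ per orthonormal pair $H_i\otimes H_i$; matching these against the decomposition of $\tfrac{1-u}{v-u}\Omega-r_{\mathrm{DJ}}$ into root and Cartan components verifies the claimed formula. The one delicate point, and the main obstacle, is bookkeeping the asymmetry: the complement $\bar{W}_0$ treats $e_{\alpha}$ and $e_{-\alpha}$ differently, since their lifts are $e_{\alpha}v^{-1}$ versus $e_{-\alpha}(1-v^{-1})$, and the Cartan dual at $k=0$ acquires the constant $-\tfrac12$. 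It is exactly these two features that turn what would otherwise be a symmetric correction into the skew Drinfeld--Jimbo term $-r_{\mathrm{DJ}}$, so the signs and the constant shift must be tracked carefully in order to land on $r_0(u,v)$ rather than its transpose.
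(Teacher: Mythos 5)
Your proposal is correct and follows essentially the same route as the paper's own proof: the same Lagrangian complement $\bar{W}_0=\mathrm{span}\{(e_{-\alpha},0),(0,e_{\alpha}),(h_{\alpha},-h_{\alpha})\}$, the same lift through $\psi^{-1}$ to $W_0\subseteq\mathfrak{g}[u^{-1}]$, and the same dual-basis computation, which you in fact carry out in more detail than the paper (your dual vectors agree with its list, e.g.\ $(1-u)u^{-k-1}e_{\pm\alpha}$, $u^{-1}e_{\alpha}$, $(u^{-1}-1)e_{-\alpha}$, and the Cartan dual with the $-\tfrac12$ shift, which you handle more cleanly via an orthonormal basis of $\mathfrak{h}$). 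The geometric-series assembly and the matching against $\tfrac{1-u}{v-u}\Omega-r_{\mathrm{DJ}}$ check out, so there is nothing to correct.
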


\begin{proof}
Let $\bar{W_0}$ be the Lie subalgebra of $\mathfrak{g}\oplus\mathfrak{g}$ spanned by the pairs $(e_{-\alpha},0)$, $(0,e_{\alpha})$, $(h_{\alpha},-h_{\alpha})$, for all positive roots $\alpha$.
Then the corresponding Lagrangian subalgebra $W_0$ of $\mathfrak{g}((u))$ is spanned by the elements $\psi^{-1}(e_{-\alpha},0)=(1-u^{-1})e_{-\alpha}$, $\psi^{-1}(0,e_{\alpha})=u^{-1}e_{\alpha}$, $\psi^{-1}(h_{\alpha},-h_{\alpha})=(1-2u^{-1})h_{\alpha}$ and contains $u^{-1}(1-u^{-1})\mathfrak{g}[u^{-1}]$.

The basis in $W_0$ which is dual to the canonical basis of $\mathfrak{g}[u]$ is the following: $(1-u)u^{-k-1}e_{\alpha}$, $(1-u)u^{-k-1}e_{-\alpha}$, $(1-u)u^{-k-1}h_{\alpha}$, $(u^{-1}-1)e_{-\alpha}$, $u^{-1}e_{\alpha}$, $-\frac{1}{4}(1-2u^{-1})h_{\alpha}$. The corresponding $r$-matrix is $r_0(u,v)$. 

\end{proof}

Consequently, the following result holds: 
\begin{thm}
Let $a(u)=1/1-u$ and $r(u,v)$ be an $r$-matrix which corresponds to a Lagrangian subalgebra $W$ of $\mathfrak{g}((u))$, with respect to the form 
$Q_{a(u)}$, such that $W \subseteq \mathfrak{g}[u^{-1}]$. Then
\[r(u,v)= \frac{1-u}{v-u} \Omega-r,\]
where $r\in\mathfrak{g}\otimes\mathfrak{g}$ verifies $r+r^{21}=\Omega$ and $\mathrm{CYB}(r)=0$.

\end{thm}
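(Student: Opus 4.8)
The plan is to follow the same two-step reduction that the proofs of the two preceding theorems use (for $a(u)=1/(1-c_1u)(1-c_2u)$ and for $a(u)=1/(1-u)^2$), now applied through the correspondence established in the Proposition for $a(u)=1/(1-u)$. The guiding observation is that the hypotheses on $W$ sandwich it between $\mathfrak{g}[u^{-1}]^{\perp}=u^{-1}(1-u^{-1})\mathfrak{g}[u^{-1}]$ and $\mathfrak{g}[u^{-1}]$, so the whole analytic content of $W$ is concentrated in the finite-dimensional quotient $\bar{W}=W/\mathfrak{g}[u^{-1}]^{\perp}$, while the tail $\mathfrak{g}[u^{-1}]^{\perp}\subseteq W$ is common to every admissible $W$.

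First I would invoke the preceding Proposition: since $W$ is a Lagrangian subalgebra of $\mathfrak{g}((u))$ transversal to $\mathfrak{g}[[u]]$ with $W\subseteq\mathfrak{g}[u^{-1}]$, it is uniquely determined by its image $\bar{W}$, a Lagrangian subalgebra of $\mathfrak{g}\oplus\mathfrak{g}$ transversal to $\mathrm{diag}(\mathfrak{g})$. Next I would use the classical description (references \cite{RS, S1}) of such subalgebras: Lagrangian subalgebras of $\mathfrak{g}\oplus\mathfrak{g}$ transversal to the diagonal are in one-to-one correspondence with solutions $r\in\mathfrak{g}\otimes\mathfrak{g}$ of the modified classical Yang--Baxter system $r+r^{21}=\Omega$, $\mathrm{CYB}(r)=0$, the subalgebra being the graph of the operator attached to $r$. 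This identifies the finite-dimensional datum $\bar{W}$ with exactly the tensor appearing in the conclusion.

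It then remains to read off the shape of $r(u,v)$. Here I would argue, exactly as in the case-$1$ theorem, that $r(u,v)$ is determined by a constant tensor together with a non-constant part that does not depend on $W$: the dual basis in $W$ splits into a part forced to lie in the common ideal $\mathfrak{g}[u^{-1}]^{\perp}$ and a part carried by $\bar{W}$. Summing the ideal part reproduces the non-constant term $\frac{1-u}{v-u}\Omega$ already computed in the Corollary for the reference subalgebra $\bar{W}_0$, while the complementary part, coming from the degree-zero generators, assembles into a constant tensor which, by the correspondence above, is precisely $-r$. Thus $r(u,v)=\frac{1-u}{v-u}\Omega-r$ with $r$ a solution of the modified equation; the reference computation is the special case $r=r_{\mathrm{DJ}}$, consistent with $r_{\mathrm{DJ}}+r_{\mathrm{DJ}}^{21}=\Omega$ (since $\Omega$ is symmetric and $\sum_{\alpha>0}e_{\alpha}\wedge e_{-\alpha}$ is skew) and $\mathrm{CYB}(r_{\mathrm{DJ}})=0$.

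The main obstacle I anticipate is the last bookkeeping step: justifying cleanly that the non-constant part of $r(u,v)$ is genuinely independent of the choice of $W$ and equals that of $r_0(u,v)$, so that varying $\bar{W}$ alters only the constant term. This amounts to checking that the dual elements attached to the high-degree generators, being forced into the fixed ideal $\mathfrak{g}[u^{-1}]^{\perp}$, are determined there by $Q_{a(u)}$ restricted to $\mathfrak{g}[u]\times\mathfrak{g}[u^{-1}]^{\perp}$ alone, independently of the head of $W$, and that the resulting sign and normalization of the constant term match the modified Yang--Baxter convention. The genuinely nontrivial input, namely the bijection between transversal Lagrangians of $\mathfrak{g}\oplus\mathfrak{g}$ and solutions of the modified equation, I would take from the cited literature rather than reprove.
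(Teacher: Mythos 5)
Your proposal is correct and follows essentially the same route as the paper: this theorem is stated there without a separate proof, being the analogue of the theorem for $a(u)=1/(1-c_1u)(1-c_2u)$, whose proof is exactly your reduction --- $W$ is determined by a Lagrangian subalgebra $\bar{W}$ of $\mathfrak{g}\oplus\mathfrak{g}$ transversal to $\mathrm{diag}(\mathfrak{g})$, such $\bar{W}$ correspond to solutions of $r+r^{21}=\Omega$, $\mathrm{CYB}(r)=0$ by \cite{RS, S1}, and only the constant part of $r(u,v)$ varies with $W$ while the non-constant part coincides with that of $r_0(u,v)$. One minor bookkeeping correction to your last step: the $v^{-1}\Omega$ contribution to the non-constant part comes from the degree-zero dual elements, i.e.\ the lifts $c_\alpha+v^{-1}d_\alpha$ of $\bar{W}$ whose components $d_\alpha$ satisfy $K(e_\beta,d_\alpha)=\delta_{\alpha\beta}$ by duality, rather than from the ideal $\mathfrak{g}[u^{-1}]^{\perp}$, but this does not affect the conclusion.
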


The case $a(u)=1$ can be treated in a similar manner and the following results can be easily proved: 
\begin{prop}
Let $a(u)=1$. There exists a one-to-one correspondence between Lagrangian subalgebras $W$ of $\mathfrak{g}((u))$, with respect to $Q_{a(u)}$, which are transversal to $\mathfrak{g}[[u]]$ and satisfy 
$W \subseteq \mathfrak{g}[u^{-1}]$, and Lagrangian subalgebras in $\mathfrak{g}+\varepsilon\mathfrak{g}$ transversal to $\mathfrak{g}$. 
\end{prop}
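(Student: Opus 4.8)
The plan is to follow the same strategy as in the case $a(u)=1/(1-u)^2$, the only structural difference being that here $1/a(u)=1$, so the relevant quotient degenerates at the origin (the double root $u^{-1}=0$) rather than at $u=1$. First I would compute the orthogonal complement of $\mathfrak{g}[u^{-1}]$ with respect to $Q_{a(u)}=\mathrm{Res}_{u=0}(K(\cdot,\cdot))$. A direct residue computation shows that $xu^{-i}$ pairs nontrivially with $yu^{-j}$ only when $i+j=1$, from which one obtains $\mathfrak{g}[u^{-1}]^{\perp}=u^{-2}\mathfrak{g}[u^{-1}]$ (consistently with the general pattern $\mathfrak{g}[u^{-1}]^{\perp}=\tfrac{1}{a(u)}u^{-2}\mathfrak{g}[u^{-1}]$). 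Since $W$ is Lagrangian, $W=W^{\perp}$, and the hypothesis $W\subseteq\mathfrak{g}[u^{-1}]$ yields, after taking perpendiculars, the inclusion $W\supseteq\mathfrak{g}[u^{-1}]^{\perp}=u^{-2}\mathfrak{g}[u^{-1}]$. Thus $W/u^{-2}\mathfrak{g}[u^{-1}]$ is a subalgebra of $\mathfrak{g}[u^{-1}]/u^{-2}\mathfrak{g}[u^{-1}]$.

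Next I would identify this ambient quotient with $\mathfrak{g}+\varepsilon\mathfrak{g}$. Define $\psi:\mathfrak{g}[u^{-1}]\longrightarrow\mathfrak{g}+\varepsilon\mathfrak{g}$ by $\psi(x)=x$ and $\psi(xu^{-1})=\varepsilon x$ for all $x\in\mathfrak{g}$. One checks that $\psi$ is a Lie algebra epimorphism: the only nontrivial relation is $[\psi(xu^{-1}),\psi(yu^{-1})]=\varepsilon^2[x,y]=0$, which matches $[xu^{-1},yu^{-1}]=[x,y]u^{-2}\in\ker\psi$. Since $\ker\psi=u^{-2}\mathfrak{g}[u^{-1}]$, the map $\psi$ induces an isomorphism $\hat\psi$ of $\mathfrak{g}[u^{-1}]/u^{-2}\mathfrak{g}[u^{-1}]$ with $\mathfrak{g}+\varepsilon\mathfrak{g}$. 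A residue computation on the lifts $a=x+x'u^{-1}$, $b=y+y'u^{-1}$ gives $Q_{a(u)}(a,b)=K(x,y')+K(x',y)$, so the form induced on the quotient is exactly $\bar Q_{\varepsilon}$; here it is essential that $\ker\psi=\mathfrak{g}[u^{-1}]^{\perp}$, which guarantees that the induced pairing is well defined.

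I would then set $\bar W=\psi(W)$ and prove that it is Lagrangian in $(\mathfrak{g}+\varepsilon\mathfrak{g},\bar Q_{\varepsilon})$ and transversal to $\mathfrak{g}$. Isotropy of $\bar W$ is immediate from isotropy of $W$. For maximality, the equality $\ker\psi=\mathfrak{g}[u^{-1}]^{\perp}$ lets me compute $\bar W^{\perp}$ by lifting: a lift $a\in\mathfrak{g}[u^{-1}]$ of an element of $\bar W^{\perp}$ satisfies $Q_{a(u)}(a,W)=0$, i.e. $a\in W^{\perp}\cap\mathfrak{g}[u^{-1}]=W\cap\mathfrak{g}[u^{-1}]=W$, whence $\bar W^{\perp}=\bar W$. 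For transversality, note that $\mathfrak{g}[[u]]\cap\mathfrak{g}[u^{-1}]=\mathfrak{g}$ maps onto the copy of $\mathfrak{g}$ inside $\mathfrak{g}+\varepsilon\mathfrak{g}$; if $\bar v\in\bar W\cap\mathfrak{g}$, lifting to $v\in W$ and to a constant $x\in\mathfrak{g}$ gives $v-x\in\ker\psi\subseteq W$, so $x\in W\cap\mathfrak{g}[[u]]=0$ and $\bar v=0$. Together with the half-dimensionality of $\bar W$ this gives $\bar W\oplus\mathfrak{g}=\mathfrak{g}+\varepsilon\mathfrak{g}$.

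Finally, bijectivity follows by constructing the inverse explicitly: to a Lagrangian $\bar W\subseteq\mathfrak{g}+\varepsilon\mathfrak{g}$ transversal to $\mathfrak{g}$ one associates $W=\psi^{-1}(\bar W)\subseteq\mathfrak{g}[u^{-1}]$, which automatically contains $\ker\psi=u^{-2}\mathfrak{g}[u^{-1}]$, and the same descent arguments run in reverse to show that $W$ is Lagrangian and transversal to $\mathfrak{g}[[u]]$. I expect the main technical point to be the descent of the Lagrangian property, namely verifying that $W=W^{\perp}$ descends to $\bar W=\bar W^{\perp}$, since this is precisely where the sharp equality $\ker\psi=\mathfrak{g}[u^{-1}]^{\perp}$, rather than a mere inclusion, is indispensable; the remaining verifications are routine and entirely parallel to the earlier cases.
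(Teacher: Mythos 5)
Your proof is correct and is exactly the argument the paper has in mind: the paper omits this case with the remark that it ``can be treated in a similar manner'' to the $a(u)=1/(1-u)^2$ case, and your writeup is precisely that adaptation, with the quotient taken at the double root $u^{-1}=0$ (so $\mathfrak{g}[u^{-1}]^{\perp}=u^{-2}\mathfrak{g}[u^{-1}]$ and $\psi(xu^{-1})=\varepsilon x$) instead of at $u^{-1}=1$. All the verifications (descent of the form via $\ker\psi=\mathfrak{g}[u^{-1}]^{\perp}$, Lagrangian property, transversality, and the inverse construction) check out.
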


\begin{thm}
Let $a(u)=1$ and $r(u,v)$ be an $r$-matrix which corresponds to a Lagrangian subalgebra $W$ of $\mathfrak{g}((u))$, with respect to the form 
$Q_{a(u)}$, such that $W \subseteq \mathfrak{g}[u^{-1}]$. Then
\[r(u,v)= \frac{\Omega}{v-u}+r,\]
where $r\in\mathfrak{g}\wedge\mathfrak{g}$ verifies the classical Yang--Baxter equation $\mathrm{CYB}(r)=0$. 
\end{thm}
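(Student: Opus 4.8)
The plan is to reproduce, for $\frac{1}{a(u)}=1$, the same three-step argument already used for $a(u)=1/(1-u)^2$, the only changes being that the fixed lattice and the reference Lagrangian subalgebra must be recomputed. First I would record the orthogonal complement: with $a(u)=1$ the form is $Q_{a(u)}(xu^i,yu^j)=K(x,y)\,\mathrm{Res}_{u=0}(u^{i+j})=K(x,y)\delta_{i+j,-1}$, whence $\mathfrak{g}[u^{-1}]^{\perp}=u^{-2}\mathfrak{g}[u^{-1}]$. Since $W$ is Lagrangian and contained in $\mathfrak{g}[u^{-1}]$, it contains this lattice, and by the preceding Proposition $W$ is uniquely determined by its image $\bar{W}$ in $\mathfrak{g}[u^{-1}]/u^{-2}\mathfrak{g}[u^{-1}]\cong\mathfrak{g}+\varepsilon\mathfrak{g}$ under $\psi(x)=x$, $\psi(xu^{-1})=\varepsilon x$, where $\bar{W}$ is Lagrangian with respect to $\bar{Q}_{\varepsilon}$ and transversal to $\mathfrak{g}$.

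Second, I would invoke Stolin's correspondence \cite{S1}. Writing $\bar{W}$ as the graph $\{\varepsilon x+R(x):x\in\mathfrak{g}\}$ of a linear map $R:\mathfrak{g}\to\mathfrak{g}$ (transversality to $\mathfrak{g}$ making the projection $\bar{W}\to\varepsilon\mathfrak{g}$ an isomorphism), isotropy of $\bar{W}$ with respect to $\bar{Q}_{\varepsilon}$ forces $R$ to be $K$-skew, and the subalgebra property of $\bar{W}$ is equivalent to $\mathrm{CYB}(r)=0$, where $r=\sum_i X_i\otimes R(X_i^{*})\in\mathfrak{g}\wedge\mathfrak{g}$ and $\{X_i\}$, $\{X_i^{*}\}$ are $K$-dual bases of $\mathfrak{g}$. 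This produces the skew-symmetric solution $r$ appearing in the statement.

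Third, I would pin down the explicit formula by reconstructing $r(u,v)$ from dual bases, exactly as in the Corollaries. Lifting $\bar{W}$ back to $W=\mathrm{span}\{Xu^{-1}+R(X):X\in\mathfrak{g}\}\oplus u^{-2}\mathfrak{g}[u^{-1}]$, one checks that the canonical basis element $X u^{k}$ of $\mathfrak{g}[u]$ has dual element $X^{*}u^{-k-1}$ for $k\geq 1$ (which lies in the fixed lattice, independently of $\bar{W}$) and $X^{*}u^{-1}+R(X^{*})$ for $k=0$. Summing, the fixed-lattice contributions together with the leading terms $X^{*}u^{-1}$ of the degree-zero duals assemble into $\sum_{k\geq 0}u^{k}v^{-k-1}\Omega=\frac{\Omega}{v-u}$, while the degree-zero corrections $R(X^{*})$ assemble into the constant $r$; the reference $\bar{W}_0=\varepsilon\mathfrak{g}$ (i.e.\ $R=0$, $r=0$) recovers $r_0(u,v)=\frac{\Omega}{v-u}$. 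This yields $r(u,v)=\frac{\Omega}{v-u}+r$.

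The step I expect to be the main obstacle is the third one, and specifically the claim that the non-constant part of $r(u,v)$ is universal. One must verify that the duals of all generators $X u^{k}$ with $k\geq 1$ are forced into the fixed lattice $u^{-2}\mathfrak{g}[u^{-1}]$ and are therefore insensitive to the choice of $\bar{W}$, so that the entire $\bar{W}$-dependence is confined to the degree-zero sector and materializes precisely as the additive constant $r$. One should also check that here one obtains a genuinely skew-symmetric $r$ with $\mathrm{CYB}(r)=0$, rather than the modified equation $r+r^{21}=\Omega$ arising in the $\mathfrak{g}\oplus\mathfrak{g}$ cases; this reflects exactly the difference between $\mathfrak{g}$ being a Lagrangian subalgebra of the split form $\bar{Q}_{\varepsilon}$ and being the diagonal in $\mathfrak{g}\oplus\mathfrak{g}$ with the difference form.
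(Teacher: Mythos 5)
Your proposal is correct and takes essentially the same route as the paper, which for $a(u)=1$ simply states that the results ``can be easily proved'' by the method used for $a(u)=1/(1-u)^2$: reduce modulo $\mathfrak{g}[u^{-1}]^{\perp}=u^{-2}\mathfrak{g}[u^{-1}]$ to a Lagrangian subalgebra $\bar{W}\subseteq\mathfrak{g}+\varepsilon\mathfrak{g}$ transversal to $\mathfrak{g}$, invoke the correspondence of \cite{S1} with skew-symmetric solutions of $\mathrm{CYB}(r)=0$, and identify the universal non-constant part $\frac{\Omega}{v-u}$ via dual bases. Your third step merely makes explicit the dual-basis verification (that the duals of $Xu^{k}$, $k\geq 1$, lie in the fixed lattice, so all dependence on $\bar{W}$ sits in the constant term) which the paper leaves implicit.
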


\section{Lie bialgebra structures on $\mathfrak{g}[u]$ in Case II}

We will analyse in more detail the case $D_{\bar{\delta}}(\mathfrak{g}[[u]])=\mathfrak{g}((u))\oplus\mathfrak{g}$. We note that
the double is endowed with the following nondegenerate bilinear form:
\[ Q_{a(u)}(f_1(u)+x_1,f_2(u)+x_2)=\mathrm{Res}_{u=0}(u^{-1}a(u)K(f_1(u),f_2(u)))-K(x_1,x_2),\]
for all $f_1(u), f_2(u)\in\mathfrak{g}((u))$ and $x_1,x_2\in\mathfrak{g}$. 

According to \cite{SZ}, the following statement holds: 
\begin{prop}
There exists a one-to-one correspondence between Lie bialgebra structures 
$\delta$ on $\mathfrak{g}[u]$ satisfying  
$D_{\bar{\delta}}(\mathfrak{g}[[u]])=\mathfrak{g}((u))\oplus\mathfrak{g}$ and 
bounded Lagrangian subalgebras $W$ of $\mathfrak{g}((u))$, with respect to 
the nondegenerate bilinear form $Q_{a(u)}$, and transversal to $\mathfrak{g}[[u]]$.

\end{prop}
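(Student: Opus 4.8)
The plan is to deduce this statement from the general Drinfeld correspondence between Lie bialgebra structures and Manin triples, specialised to the concrete double of Case II, in complete parallel with the proof of the analogous proposition for Case I. Recall that for any Lie bialgebra $(L,\mu)$ the cobracket is entirely encoded by the pair of transversal Lagrangian subalgebras $(L,L^*)$ inside the double $D_\mu(L)$; conversely, once $D$ and the embedded Lagrangian $L$ are fixed, every Lagrangian subalgebra of $D$ complementary to $L$ yields a Lie bialgebra structure on $L$ by transporting the bracket of the complement, identified with $L^*$. So the first step is to fix the concrete model: take $D=\mathfrak{g}((u))\oplus\mathfrak{g}$ with the form $Q_{a(u)}$ displayed above, and embed $L=\mathfrak{g}[[u]]$ via $f(u)\mapsto f(u)+f(0)$. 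Using that $\mathrm{Res}_{u=0}(u^{-1}a(u)K(f,g))=K(f(0),g(0))$ for $f,g\in\mathfrak{g}[[u]]$, a short computation shows this copy of $\mathfrak{g}[[u]]$ is isotropic, and an elementary lowest-term argument shows it equals its own orthogonal complement, hence is Lagrangian; it is a subalgebra because $f\mapsto f(0)$ is a homomorphism. This realises the hypothesis $D_{\bar{\delta}}(\mathfrak{g}[[u]])=\mathfrak{g}((u))\oplus\mathfrak{g}$ as a Manin triple $(D,\mathfrak{g}[[u]],W)$.

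Next I would set up the two maps of the bijection. Given $\delta$ on $\mathfrak{g}[u]$ with the prescribed double, extend it to $\bar{\delta}$ on $\mathfrak{g}[[u]]$ by the continuity formula $\bar{\delta}(\sum x_n u^n)=\sum\delta(x_n u^n)$; the double $D_{\bar{\delta}}(\mathfrak{g}[[u]])$ then carries its complementary Lagrangian $W:=\mathfrak{g}[[u]]^*$, which under the chosen identification becomes a Lagrangian subalgebra of $\mathfrak{g}((u))\oplus\mathfrak{g}$ transversal to $\mathfrak{g}[[u]]$. In the other direction, from such a $W$ one reconstructs $\bar{\delta}$, and hence $\delta$, exactly as in the remark following the Case I proposition: choose the canonical basis $e_{\alpha,k}$ of $\mathfrak{g}[u]$, take the basis $w_{\alpha,k}$ of $W$ dual to it with respect to $Q_{a(u)}$, form $r(u,v)=\sum_{\alpha,k}e_{\alpha,k}\otimes w_{\alpha,k}$, and set $\delta(f)=[f\otimes 1+1\otimes f,r(u,v)]$.

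The core of the argument is the equivalence between $\delta$ being a genuine polynomial structure, namely $\delta(\mathfrak{g}[u])\subseteq\mathfrak{g}[u]\otimes\mathfrak{g}[u]$ with $\bar{\delta}$ its completion, and $W$ being \emph{bounded}. The mechanism is that boundedness is precisely what makes $\mathfrak{g}[u]$ and $W$ into subspaces in perfect duality under $Q_{a(u)}$ — the Case II analogue of the splitting $W\oplus\mathfrak{g}[u]=\mathfrak{g}[u,u^{-1}]$ — so that the dual basis $w_{\alpha,k}$ exists with degrees bounded uniformly in $\alpha$; the resulting $r(u,v)$ is then a rational kernel and $\delta$ takes values in the finite tensors. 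I would prove both implications: if $\delta$ is polynomial, dualising the degree bound on $\delta$ against the canonical basis forces the $\mathfrak{g}((u))$-component of $W$ into $u^n\mathfrak{g}[u^{-1}]$ for some $n$; conversely, if $W$ is bounded and transversal to $\mathfrak{g}[[u]]$, the dual basis construction goes through with bounded degree and one checks directly that the cobracket preserves $\mathfrak{g}[u]$. Finally I would verify that $\delta\mapsto W$ and $W\mapsto\delta$ are mutually inverse, which is immediate once both are written through the same $r$-matrix.

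I expect the main obstacle to be exactly this boundedness–polynomiality equivalence, together with the bookkeeping forced by Case II: the extra summand $\mathfrak{g}$ and the twisting factor $u^{-1}a(u)$ shift the residue pairing relative to Case I, so one must track how the embedding $f\mapsto f+f(0)$ interacts with the dual basis and confirm that transversality of $W$ to $\mathfrak{g}[[u]]$ inside the double descends to the polynomial-level perfect pairing between $W$ and $\mathfrak{g}[u]$. The abstract Drinfeld correspondence and the Lagrangian check are routine; the genuine work is showing that these identifications are compatible and that no information is lost in passing between $\mathfrak{g}[u]$ and $\mathfrak{g}[[u]]$.
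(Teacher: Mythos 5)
The first thing to note is that the paper does not prove this proposition at all: it is quoted verbatim from \cite{SZ} (``According to \cite{SZ}, the following statement holds''), and the decisive ingredient --- that the Lagrangian subalgebra attached to a polynomial structure is \emph{bounded} --- is likewise only cited, as property (i) in the introduction. So there is no in-paper argument to compare against, and your proposal has to stand on its own. Its architecture is sound and almost certainly mirrors the proof in \cite{SZ}: the isotropic embedding $f\mapsto f+f(0)$ is the right one (your computation $\mathrm{Res}_{u=0}(u^{-1}a(u)K(f,g))=K(f(0),g(0))$ is correct, and the lowest-term argument does show this copy of $\mathfrak{g}[[u]]$ is Lagrangian), the Manin-triple dictionary is the right framework, and the dual-basis reconstruction matches the paper's remark (except that in Case II one must also project $w_{\alpha,k}$ onto $\mathfrak{g}[u,u^{-1}]$, killing the $\mathfrak{g}$-component, to get $r(u,v)$). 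You also correctly isolate the crux: $\delta$ polynomial $\Longleftrightarrow$ $W$ bounded.

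That crux, however, is exactly where the proposal has a genuine gap, and the mechanism you offer for the forward direction would fail. The duality conditions $Q_{a(u)}(w_{\alpha,k},e_{\beta,j})=\delta_{\alpha\beta}\delta_{kj}$ constrain only the negative-degree coefficients of $a(u)K(w_{\alpha,k},\cdot)$ together with the $\mathfrak{g}$-component; the positive-degree tail of $w_{\alpha,k}$ --- which is precisely what boundedness is about --- is left entirely free by the pairing and is dictated by membership in $W$, so ``dualising the degree bound'' is circular. Note also that there is no uniform degree bound on $\delta$ to dualise: the cocycle identity only gives $\deg\delta(xu^k)\leq k+N$, growing with $k$, whereas boundedness requires a single $n$ with $W\subseteq u^n\mathfrak{g}[u^{-1}]\oplus\mathfrak{g}$. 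Finally, bounding each $w_{\alpha,k}$ separately would not suffice, because $W$ coincides with the span of the dual basis only \emph{after} boundedness is known: an unbounded $W$ can contain elements whose functionals on $\mathfrak{g}[u]$ have infinite support and hence are not finite combinations of the $w_{\alpha,k}$. The converse direction also needs more than ``one checks directly'': the reason $r=\sum e_{\alpha,k}\otimes w_{\alpha,k}$ sums to a rational kernel is that a bounded Lagrangian $W$ contains the finite-codimension orthogonal $(u^n\mathfrak{g}[u^{-1}]\oplus\mathfrak{g})^{\perp}$, reducing everything to a finite-dimensional quotient --- the mechanism the paper exploits case by case in Sections 2--4. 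As written, your text reproduces the skeleton of the correspondence but leaves its entire content, property (i) of \cite{SZ}, unproven.
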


\begin{rem}
If $W$ is a bounded Lagrangian subalgebra of 
$\mathfrak{g}((u))\oplus\mathfrak{g}$ transversal to 
$\mathfrak{g}[[u]]$, then $W\oplus\mathfrak{g}[u]=\mathfrak{g}[u,u^{-1}]\oplus\mathfrak{g}$. Then the corresponding $r$-matrix can be constructed by choosing dual bases in $W$ and $\mathfrak{g}[u]$ with respect to $Q_{a(u)}$ and projecting onto $\mathfrak{g}[u,u^{-1}]$. 
\end{rem}

For any $\sigma\in\mathrm{Aut}_{\mathbb{C}[u]}(\mathfrak{g}[u])$, denote by  $\tilde{\sigma}(u)=\sigma(u)\oplus\sigma(0)$, regarded as an automorphism of $\mathfrak{g}((u))\oplus\mathfrak{g}$. Then the following result holds (we omit the proof which is similar to that of Proposition \ref{max_ord}):

\begin{prop}
Suppose that $W$ is a bounded Lagrangian subalgebra of $\mathfrak{g}((u))\oplus\mathfrak{g}$, with respect to $Q_{a(u)}$ and transversal to $\mathfrak{g}[[u]]$. 
Then there exists $\sigma\in\mathrm{Aut}_{\mathbb{C}[u]}(\mathfrak{g}[u])$ such that $\tilde{\sigma}(u)(W)\subseteq (\mathbb{O}_{\alpha}\cap \mathfrak{g}[u,u^{-1}])\oplus\mathfrak{g}$, where $\alpha$ is either a simple root or $-\alpha_{\max}$. 

\end{prop}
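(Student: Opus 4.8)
The plan is to reduce this statement to the already-established Case I result, Proposition \ref{max_ord}, by exploiting the product structure of the double. The key observation is that the automorphism $\tilde{\sigma}(u)=\sigma(u)\oplus\sigma(0)$ acts diagonally with respect to the decomposition $\mathfrak{g}((u))\oplus\mathfrak{g}$, and that the order $\mathbb{O}_\alpha$ lives entirely in the first summand. So first I would invoke the result from \cite{SZ} (quoted in the discussion preceding Theorem \ref{condCaseII}) that there exists $\sigma(u)\in\mathrm{Aut}_{\mathbb{C}[u]}(\mathfrak{g}[u])$ with $\tilde{\sigma}(u)(W\cdot\mathbb{C}[[u^{-1}]])\subseteq \mathbb{O}_\alpha\oplus\mathfrak{g}$, where $\alpha$ is a simple root or $-\alpha_{\max}$. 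This handles the hard existence question; what remains is to intersect back down with the appropriate polynomial subspace.

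Next I would mimic the argument of Proposition \ref{max_ord} verbatim, summand by summand. Since $W$ is bounded, $W\subseteq \mathfrak{g}[u,u^{-1}]\oplus\mathfrak{g}$, and since $\tilde{\sigma}(u)$ is induced by an element of $\mathrm{Aut}_{\mathbb{C}[u]}(\mathfrak{g}[u])$ on the first factor while acting by the constant invertible map $\sigma(0)$ on the second, it preserves the subspace $\mathfrak{g}[u,u^{-1}]\oplus\mathfrak{g}$, i.e.\ $\tilde{\sigma}(u)(\mathfrak{g}[u,u^{-1}]\oplus\mathfrak{g})=\mathfrak{g}[u,u^{-1}]\oplus\mathfrak{g}$. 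Then I would write the chain of inclusions
\[
\tilde{\sigma}(u)(W)\subseteq\tilde{\sigma}(u)(W\cdot\mathbb{C}[[u^{-1}]])\cap\tilde{\sigma}(u)(\mathfrak{g}[u,u^{-1}]\oplus\mathfrak{g})\subseteq(\mathbb{O}_\alpha\oplus\mathfrak{g})\cap(\mathfrak{g}[u,u^{-1}]\oplus\mathfrak{g}).
\]
The final step is the elementary distribution of the intersection over the direct sum: because the two summands $\mathfrak{g}((u))$ and $\mathfrak{g}$ are independent, $(\mathbb{O}_\alpha\oplus\mathfrak{g})\cap(\mathfrak{g}[u,u^{-1}]\oplus\mathfrak{g})=(\mathbb{O}_\alpha\cap\mathfrak{g}[u,u^{-1}])\oplus\mathfrak{g}$, which is exactly the claimed target.

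The two points requiring a little care are precisely the ones the authors defer to the reader. The first is verifying that $\tilde{\sigma}(u)$ genuinely stabilizes $\mathfrak{g}[u,u^{-1}]\oplus\mathfrak{g}$; on the first factor this is the already-noted equality $\sigma(u)(\mathfrak{g}[u,u^{-1}])=\mathfrak{g}[u,u^{-1}]$ used in Proposition \ref{max_ord}, and on the second factor $\sigma(0)$ is simply an automorphism of the finite-dimensional $\mathfrak{g}$, so the stabilization is immediate in both components. The second is the compatibility of intersection with the direct-sum decomposition, which holds because an element $(f,x)\in\mathfrak{g}((u))\oplus\mathfrak{g}$ lies in a subspace of the form $A\oplus B$ if and only if $f\in A$ and $x\in B$; there is no interaction between the two coordinates. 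I do not expect any genuine obstacle here: the entire content has been offloaded onto the existence statement from \cite{SZ}, and the remaining work is the same bounded-intersection bookkeeping already carried out in Case I, now decorated with a harmless extra $\oplus\mathfrak{g}$ that every map and every subspace respects. This is exactly why the authors can reasonably write ``we omit the proof which is similar to that of Proposition \ref{max_ord}.''
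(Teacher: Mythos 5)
Your proposal is correct and is exactly the argument the paper intends: the paper omits the proof, stating it is similar to Proposition \ref{max_ord}, and your write-up carries out precisely that adaptation (boundedness gives $W\subseteq\mathfrak{g}[u,u^{-1}]\oplus\mathfrak{g}$, the existence statement from \cite{SZ} handles $W\cdot\mathbb{C}[[u^{-1}]]$, and the same intersection chain finishes, with the extra $\oplus\mathfrak{g}$ summand carried along harmlessly). No gaps.
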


From now on we will restrict ourselves to the case $-\alpha_{\max}$, so that we will study Lagrangian subalgebras $W\subseteq\mathfrak{g}[u^{-1}]\oplus\mathfrak{g}$ transversal to $\mathfrak{g}[[u]]$ . Let us recall that such subalgebras exist if only if $1/a(u)$ has degree at most 1. By a change of variable or rescaling the form $Q$, we have two situations: $a(u)=1/1-u$ or $a(u)=1$. 

\begin{prop}
Let $a(u)=1/1-u$. There exists a one-to-one correspondence between Lagrangian subalgebras $W$ of $\mathfrak{g}((u))\oplus\mathfrak{g}$, with respect to $Q_{a(u)}$, which are transversal to $\mathfrak{g}[[u]]$ and satisfy 
$W \subseteq \mathfrak{g}[u^{-1}]\oplus\mathfrak{g}$, and Lagrangian subalgebras in $\mathfrak{g}\oplus\mathfrak{g}$ transversal to $\mathrm{diag}(\mathfrak{g})$. 
\end{prop}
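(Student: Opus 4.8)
The plan is to follow the same strategy as in the Case I propositions: exploit the fact that a Lagrangian subalgebra contained in $\mathfrak{g}[u^{-1}]\oplus\mathfrak{g}$ must contain the orthogonal complement of that subspace, and then pass to a finite-dimensional quotient on which the induced form is nondegenerate. First I would compute $(\mathfrak{g}[u^{-1}]\oplus\mathfrak{g})^{\perp}$ with respect to $Q_{a(u)}$. Since the form is block-diagonal and $-K$ is nondegenerate on the $\mathfrak{g}$-summand, only the $\mathfrak{g}((u))$-block contributes, and a direct residue computation with the weight $u^{-1}a(u)=u^{-1}/(1-u)$ gives $(\mathfrak{g}[u^{-1}]\oplus\mathfrak{g})^{\perp}=(1-u^{-1})\mathfrak{g}[u^{-1}]\oplus 0$; equivalently $\mathfrak{g}[u^{-1}]^{\perp}=u^{-2}\cdot\frac{1}{u^{-1}a(u)}\cdot\mathfrak{g}[u^{-1}]=(1-u^{-1})\mathfrak{g}[u^{-1}]$. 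Because $W$ is Lagrangian and $W\subseteq\mathfrak{g}[u^{-1}]\oplus\mathfrak{g}$, we then get $W\supseteq(\mathfrak{g}[u^{-1}]\oplus\mathfrak{g})^{\perp}$.

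Next I would introduce the epimorphism $\psi:\mathfrak{g}[u^{-1}]\oplus\mathfrak{g}\longrightarrow\mathfrak{g}\oplus\mathfrak{g}$, $\psi(g(u^{-1})+y)=(g(1),y)$, whose kernel is exactly $(1-u^{-1})\mathfrak{g}[u^{-1}]\oplus 0$; this identifies the quotient $(\mathfrak{g}[u^{-1}]\oplus\mathfrak{g})/(\mathfrak{g}[u^{-1}]\oplus\mathfrak{g})^{\perp}$ with $\mathfrak{g}\oplus\mathfrak{g}$ as Lie algebras, since evaluation at $u^{-1}=1$ is a homomorphism. A short computation shows that $\mathrm{Res}_{u=0}(u^{-1}a(u)K(xu^{-i},x'u^{-j}))=K(x,x')$ for all $i,j\geq 0$, so $Q_{a(u)}$ descends through $\psi$ to $\bar{Q}((x,y),(x',y'))=K(x,x')-K(y,y')$. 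As $(\mathfrak{g}[u^{-1}]\oplus\mathfrak{g})^{\perp}$ is the radical of the restriction of $Q_{a(u)}$ to $\mathfrak{g}[u^{-1}]\oplus\mathfrak{g}$ and is contained in $W$, the image $\bar{W}=\psi(W)$ is isotropic, and the coisotropic-reduction argument (a Lagrangian $W$ with $(\mathfrak{g}[u^{-1}]\oplus\mathfrak{g})^{\perp}\subseteq W\subseteq\mathfrak{g}[u^{-1}]\oplus\mathfrak{g}$ descends to a Lagrangian of the reduced space) shows $\bar{W}$ is Lagrangian in $\mathfrak{g}\oplus\mathfrak{g}$. Since $\psi$ is a Lie homomorphism, $\bar{W}$ is a subalgebra.

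For transversality and bijectivity I would note that $\mathfrak{g}[[u]]\cap(\mathfrak{g}[u^{-1}]\oplus\mathfrak{g})$ consists of the constant elements $\{x+x:x\in\mathfrak{g}\}$, which $\psi$ sends onto $\mathrm{diag}(\mathfrak{g})$; transversality of $W$ to $\mathfrak{g}[[u]]$ then translates into transversality of $\bar{W}$ to $\mathrm{diag}(\mathfrak{g})$, exactly as in the Case I proofs. The inverse assignment sends a Lagrangian $\bar{W}\subseteq\mathfrak{g}\oplus\mathfrak{g}$ transversal to the diagonal to $W=\psi^{-1}(\bar{W})$, which automatically contains the radical and is therefore Lagrangian, a subalgebra, and transversal to $\mathfrak{g}[[u]]$; these two maps are mutually inverse. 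I expect the main obstacle to be the careful bookkeeping of the descent: verifying that the orthogonal complement is computed correctly with the extra factor $u^{-1}$ in the weight, that it coincides with the radical so that $Q_{a(u)}$ genuinely descends to $\bar{Q}$, and that the Lagrangian and transversality properties are preserved in both directions, the maximality count being the delicate point in the infinite-dimensional setting.
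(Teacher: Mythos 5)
Your proposal is correct and takes essentially the same route as the paper: you compute $(\mathfrak{g}[u^{-1}]\oplus\mathfrak{g})^{\perp}=(u^{-1}-1)\mathfrak{g}[u^{-1}]$, observe that any Lagrangian $W\subseteq\mathfrak{g}[u^{-1}]\oplus\mathfrak{g}$ must contain it, and pass to the quotient $\frac{\mathfrak{g}[u^{-1}]}{(u^{-1}-1)\mathfrak{g}[u^{-1}]}\oplus\mathfrak{g}\cong\mathfrak{g}\oplus\mathfrak{g}$, transferring the Lagrangian and transversality conditions through the evaluation map. The extra details you supply (descent of $Q_{a(u)}$ to $\bar{Q}$, coisotropic reduction, the inverse assignment $\psi^{-1}(\bar{W})$ and the bi-orthogonality needed for maximality) are precisely the ``arguments similar to those in Section 2'' that the paper leaves implicit.
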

\begin{proof}
We immediately note that $W$ must contain $(\mathfrak{g}[u^{-1}]\oplus\mathfrak{g})^{\perp}=(1-u)u^{-1}\mathfrak{g}[u^{-1}]=(u^{-1}-1)\mathfrak{g}[u^{-1}]$.
The quotient $\frac{W}{(u^{-1}-1)\mathfrak{g}[u^{-1}]}$ is a subalgebra of 
$\frac{\mathfrak{g}[u^{-1}]}{(u^{-1}-1)\mathfrak{g}[u^{-1}]}\oplus\mathfrak{g}$,
obviously identified with $\mathfrak{g}\oplus\mathfrak{g}$. The conclusion follows by arguments similar to those in Section 2. 
\end{proof}
Consequently, one obtains the following result (whose proof we omit, being similar to those in Section 2): 
\begin{thm}\label{sol_II}
Let $a(u)=1/1-u$ and $r(u,v)$ be an $r$-matrix which corresponds to a Lagrangian subalgebra $W$ of $\mathfrak{g}((u))\oplus\mathfrak{g}$,with respect to the form $Q_{a(u)}$, such that $W \subseteq \mathfrak{g}[u^{-1}]\oplus\mathfrak{g}$. Then
\[r(u,v)= \frac{u(1-v)}{v-u} \Omega+r,\]
where $r\in\mathfrak{g}\otimes\mathfrak{g}$ verifies $r+r^{21}=\Omega$ and 
$\mathrm{CYB}(r)=0$. 
\end{thm}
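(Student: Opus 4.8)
The plan is to follow the template set by the Case I theorems: reduce the infinite-dimensional problem to the finite-dimensional classification of Lagrangian complements to the diagonal via the bijection just established, and then pin down the non-constant part of $r(u,v)$ by an explicit dual-basis computation. First I would invoke the preceding Proposition: since $W$ is a Lagrangian subalgebra of $\mathfrak{g}((u))\oplus\mathfrak{g}$ transversal to $\mathfrak{g}[[u]]$ with $W\subseteq\mathfrak{g}[u^{-1}]\oplus\mathfrak{g}$, it contains the orthogonal complement $(u^{-1}-1)\mathfrak{g}[u^{-1}]$ and is therefore uniquely determined by its image $\bar{W}$ under the quotient map $\psi$, a Lagrangian subalgebra of $\mathfrak{g}\oplus\mathfrak{g}$ transversal to $\mathrm{diag}(\mathfrak{g})$. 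Conversely every such $\bar{W}$ lifts to a unique admissible $W$.

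Next I would bring in the classical classification from \cite{RS, S1}: Lagrangian subalgebras of $\mathfrak{g}\oplus\mathfrak{g}$, with respect to $\bar{Q}$, transversal to the diagonal are in one-to-one correspondence with tensors $r\in\mathfrak{g}\otimes\mathfrak{g}$ satisfying the modified classical Yang--Baxter equation $r+r^{21}=\Omega$ together with $\mathrm{CYB}(r)=0$, the subalgebra $\bar{W}$ being recovered as a graph determined by $r$. This identifies the finite datum attached to $W$ with exactly such an $r$, which will become the constant summand in the conclusion.

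The remaining, and principal, task is to show that the non-constant part of $r(u,v)$ is universal, equal to $\frac{u(1-v)}{v-u}\Omega$ and independent of the choice of $r$. For this I would exploit the affine nature of the correspondence: the basis of $W$ dual to the canonical basis of $\mathfrak{g}[u]$ with respect to $Q_{a(u)}$ splits into a tail arising from the common kernel $(u^{-1}-1)\mathfrak{g}[u^{-1}]$ and a finite correction coming from $\bar{W}$. Concretely, taking the canonical complement $\bar{W}_0$ spanned by $(e_{-\alpha},0)$, $(0,e_\alpha)$, $(h_\alpha,-h_\alpha)$, computing $W_0=\psi^{-1}(\bar{W}_0)$ together with its tail, selecting the dual basis, resumming the resulting series, and projecting onto $\mathfrak{g}[u,u^{-1}]$ as in the Remark, one obtains the closed form $\frac{u(1-v)}{v-u}\Omega$; this is the analogue of the explicit Corollaries of Section 2.

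The hard part will be this dual-basis computation in Case II, where the form $Q_{a(u)}$ carries both the extra factor $u^{-1}$ inside the residue and the $-K(x_1,x_2)$ term on the finite $\mathfrak{g}$ summand. Keeping track of the projection onto $\mathfrak{g}[u,u^{-1}]$ and resumming the geometric series to reach the stated closed form is where the genuine work lies. Once verified for $\bar{W}_0$, the general case is immediate: any other $\bar{W}$ differs from $\bar{W}_0$ only through the constant tensor $r$, which affects only the constant part of $r(u,v)$, leaving the non-constant part fixed. This completes the argument.
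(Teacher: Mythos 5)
Your proposal follows exactly the route the paper intends: the paper omits the proof of this theorem, stating only that it is similar to the arguments of Section 2, and your three steps — reduction to $\mathfrak{g}\oplus\mathfrak{g}$ via the preceding proposition, identification of Lagrangian complements to $\mathrm{diag}(\mathfrak{g})$ with solutions of the modified classical Yang--Baxter equation as in \cite{RS, S1}, and an explicit dual-basis computation for the canonical complement $\bar{W}_0$ (with the tail $(u^{-1}-1)\mathfrak{g}[u^{-1}]$, the projection onto $\mathfrak{g}[u,u^{-1}]$, and the observation that varying $\bar{W}$ changes only the constant part) — reproduce precisely the Section 2 template (Proposition, Corollary, Theorem). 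This is essentially the same approach as the paper's.
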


The remaining case $a(u)=1$ can be treated analogously. 
\begin{prop}
Let $a(u)=1$. There exists a one-to-one correspondence between Lagrangian subalgebras $W$ of $\mathfrak{g}((u))\oplus\mathfrak{g}$, with respect to $Q_{a(u)}$, which are transversal to $\mathfrak{g}[[u]]$ and satisfy 
$W \subseteq \mathfrak{g}[u^{-1}]\oplus\mathfrak{g}$, and Lagrangian subalgebras in $\mathfrak{g}\oplus\mathfrak{g}$ transversal to $\mathrm{diag}(\mathfrak{g})$. 
\end{prop}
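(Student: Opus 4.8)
The plan is to follow the pattern of the preceding Case II proposition (the one with $a(u)=1/(1-u)$), adapting the computation of the orthogonal complement to the value $a(u)=1$. First I would record the form explicitly: on $\mathfrak{g}((u))\oplus\mathfrak{g}$ it reads $Q_{a(u)}(f_1+x_1,f_2+x_2)=\mathrm{Res}_{u=0}(u^{-1}K(f_1,f_2))-K(x_1,x_2)$. A direct residue computation then gives $(\mathfrak{g}[u^{-1}]\oplus\mathfrak{g})^{\perp}=u^{-1}\mathfrak{g}[u^{-1}]$: pairing $\sum_{k\geq 0}f_ku^{-k}$ against $\sum_n y_nu^n$ on the $\mathfrak{g}((u))$ part isolates the coefficient $K(f_k,y_k)$, so the complement consists exactly of the strictly negative powers, while the term $-K(x_1,x_2)$ forces the $\mathfrak{g}$-component of the complement to vanish. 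Since $W$ is Lagrangian with $W\subseteq\mathfrak{g}[u^{-1}]\oplus\mathfrak{g}$, I conclude $W\supseteq u^{-1}\mathfrak{g}[u^{-1}]$.

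Next I would pass to the quotient. Since $u^{-1}\mathfrak{g}[u^{-1}]$ is an ideal of $\mathfrak{g}[u^{-1}]$ and equals the radical of $Q_{a(u)}$ restricted to $\mathfrak{g}[u^{-1}]\oplus\mathfrak{g}$, the form descends to a nondegenerate form on $\frac{\mathfrak{g}[u^{-1}]\oplus\mathfrak{g}}{u^{-1}\mathfrak{g}[u^{-1}]}$. I would identify this quotient with $\mathfrak{g}\oplus\mathfrak{g}$ through the Lie algebra epimorphism $\psi$ sending $(f,x)$ to $(f_0,x)$, where $f_0$ denotes the coefficient of $u^0$ in $f$; this is a homomorphism on $\mathfrak{g}[u^{-1}]$ with kernel $u^{-1}\mathfrak{g}[u^{-1}]$, and a one-line residue check shows the induced form is precisely $\bar{Q}((a,x),(b,y))=K(a,b)-K(x,y)$. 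Then $\bar{W}:=\psi(W)$ is a Lagrangian subalgebra of $\mathfrak{g}\oplus\mathfrak{g}$, and because every subspace lying between the radical and $\mathfrak{g}[u^{-1}]\oplus\mathfrak{g}$ corresponds bijectively to a subspace of the quotient (preserving isotropy, maximality with respect to the nondegenerate induced form, and the subalgebra property), the assignment $W\mapsto\bar{W}$ is a bijection with inverse $\bar{W}\mapsto\psi^{-1}(\bar{W})$.

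The main obstacle, and the step I would treat most carefully, is the translation of transversality, because $\mathfrak{g}[[u]]$ is embedded in the double as $\{(f,f(0)):f\in\mathfrak{g}[[u]]\}$ and does \emph{not} lie inside $\mathfrak{g}[u^{-1}]\oplus\mathfrak{g}$, so it cannot simply be pushed through $\psi$. The key observation is that $\mathfrak{g}[[u]]\cap\mathfrak{g}[u^{-1}]=\mathfrak{g}$ (the constants), whence the embedded $\mathfrak{g}[[u]]$ meets $\mathfrak{g}[u^{-1}]\oplus\mathfrak{g}$ exactly in $\{(x,x):x\in\mathfrak{g}\}$, which $\psi$ carries onto $\mathrm{diag}(\mathfrak{g})$. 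To see $\bar{W}\cap\mathrm{diag}(\mathfrak{g})=0$, a common element lifts to some $w\in W$ with $\psi(w)=(x,x)$; the constant element $(x,x)$ differs from $w$ by a member of $\ker\psi\subseteq W$, hence lies in $W$, and since it also lies in the embedded $\mathfrak{g}[[u]]$, transversality of $W$ to $\mathfrak{g}[[u]]$ forces $x=0$. For the reverse containment $W+\mathfrak{g}[[u]]=\mathfrak{g}((u))\oplus\mathfrak{g}$ I would split an arbitrary element into its strictly negative part (lying in $\ker\psi\subseteq W$), a Taylor-series part absorbed by the embedded $\mathfrak{g}[[u]]$, and a residual element of the second $\mathfrak{g}$-summand, the last handled by the transversality $\bar{W}\oplus\mathrm{diag}(\mathfrak{g})=\mathfrak{g}\oplus\mathfrak{g}$ in the quotient. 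Together these establish the desired bijection.
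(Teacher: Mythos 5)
Your proposal is correct and follows essentially the same route as the paper: compute $(\mathfrak{g}[u^{-1}]\oplus\mathfrak{g})^{\perp}=u^{-1}\mathfrak{g}[u^{-1}]$, pass to the quotient $\frac{\mathfrak{g}[u^{-1}]}{u^{-1}\mathfrak{g}[u^{-1}]}\oplus\mathfrak{g}\cong\mathfrak{g}\oplus\mathfrak{g}$, and transport the Lagrangian and transversality conditions through this identification. The paper compresses the last step into ``Conclusion follows easily,'' whereas you correctly supply the one genuinely delicate point it leaves implicit, namely that $\mathfrak{g}[[u]]$ sits in the double as $\{(f,f(0))\}$ and meets $\mathfrak{g}[u^{-1}]\oplus\mathfrak{g}$ exactly in the constants, which is what makes transversality to $\mathfrak{g}[[u]]$ correspond to transversality to $\mathrm{diag}(\mathfrak{g})$.
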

\begin{proof}
We note that $W$ must contain $u^{-1}\mathfrak{g}[u^{-1}]$ and $\frac{W}{u^{-1}\mathfrak{g}[u^{-1}]}$ is a subalgebra of $\frac{\mathfrak{g}[u^{-1}]}{u^{-1}\mathfrak{g}[u^{-1}]}\oplus\mathfrak{g}$, which is isomorphic to $\mathfrak{g}\oplus\mathfrak{g}$. Conclusion follows easily.
\end{proof}
 
\begin{thm}
Let $a(u)=1$ and $r(u,v)$ be an $r$-matrix which corresponds to a Lagrangian subalgebra $W$ of $\mathfrak{g}((u))\oplus\mathfrak{g}$,with respect to the form $Q_{a(u)}$, such that $W \subseteq \mathfrak{g}[u^{-1}]\oplus\mathfrak{g}$. Then
\[r(u,v)= \frac{v}{v-u} \Omega+r,\]
where $r\in\mathfrak{g}\otimes\mathfrak{g}$ verifies $r+r^{21}=\Omega$ and 
$\mathrm{CYB}(r)=0$. 

\end{thm}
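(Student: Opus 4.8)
The plan is to adapt, essentially verbatim, the two-step argument used for Theorem~\ref{sol_II} and for the theorems of Section 2. First I would invoke the Proposition immediately preceding this theorem: since $W$ is a Lagrangian subalgebra of $\mathfrak{g}((u))\oplus\mathfrak{g}$ transversal to $\mathfrak{g}[[u]]$ and contained in $\mathfrak{g}[u^{-1}]\oplus\mathfrak{g}$, it contains $u^{-1}\mathfrak{g}[u^{-1}]$ and is completely determined by its image $\bar{W}$ in $\mathfrak{g}\oplus\mathfrak{g}$ under the identification $\psi$ from that proof, a Lagrangian subalgebra transversal to $\mathrm{diag}(\mathfrak{g})$. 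By \cite{RS, S1}, such $\bar{W}$ are in bijection with constant $r$-matrices $r\in\mathfrak{g}\otimes\mathfrak{g}$ solving the modified classical Yang--Baxter equation, $r+r^{21}=\Omega$ and $\mathrm{CYB}(r)=0$; this produces the constant summand in the asserted formula and shows that it ranges exactly over the MYBE solutions.

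It then remains to identify the non-constant part and to show that it is the universal expression $\frac{v}{v-u}\Omega$, independent of $\bar{W}$. Following the recipe recalled in the Remark after the Proposition, I would assemble $r(u,v)=\sum_{\alpha,k}e_{\alpha,k}\otimes w_{\alpha,k}$ from the canonical basis vectors $e_\alpha u^k$, $e_{-\alpha}u^k$, $h_\alpha u^k$ of $\mathfrak{g}[u]$ and their $Q_{a(u)}$-dual basis $w_{\alpha,k}$ in $W$, and then project the $W$-slot onto $\mathfrak{g}[u,u^{-1}]$. For $k\geq 1$ the relevant dual vectors are forced into $u^{-1}\mathfrak{g}[u^{-1}]\subseteq W$ --- explicitly $v^{-k}e_{-\alpha}$, $v^{-k}e_{\alpha}$ and the corresponding Cartan element --- so they do not depend on $\bar{W}$, and their contribution is the geometric series $\sum_{k\geq1}(u/v)^{k}\,\Omega=\frac{u}{v-u}\Omega$. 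The $k=0$ terms, by contrast, are read off from the finite part $\psi^{-1}(\bar{W})$ and supply a constant tensor; the claim is that, after reallocating one copy of $\Omega$, the two pieces recombine into $\frac{v}{v-u}\Omega+r$ with $r$ the MYBE solution attached to $\bar{W}$.

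To make this precise I would first carry out the $k=0$ computation for one convenient Lagrangian complement, e.g.\ $\bar{W_0}$ spanned by $(e_{-\alpha},0)$, $(0,e_{\alpha})$, $(h_\alpha,-h_\alpha)$, lifting it by $\psi^{-1}$ exactly as in the corollaries of Section 2 and reading off the projected duals of the constant generators $e_\alpha$, $e_{-\alpha}$, $h_\alpha$. I would then argue that passing from $\bar{W_0}$ to an arbitrary $\bar{W}$ alters only these $k=0$ duals, hence only the constant part of $r(u,v)$, while the $k\geq 1$ contribution --- and therefore the non-constant factor --- is unchanged.

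I expect the main obstacle to be precisely the $k=0$ bookkeeping. Because the Case II form $Q_{a(u)}$ carries the weight $u^{-1}a(u)=u^{-1}$ together with the extra term $-K(x_1,x_2)$, several of the dual vectors of the constant generators land in the $\mathfrak{g}$-summand and are annihilated by the projection onto $\mathfrak{g}[u,u^{-1}]$, creating an apparent asymmetry between the $e_\alpha$- and $e_{-\alpha}$-contributions. Checking that after this projection the constant tensor recombines with the geometric series into exactly $\frac{v}{v-u}\Omega+r$, with $r$ normalised so that $r+r^{21}=\Omega$, is the delicate point: it requires careful tracking of signs and of the identification $\psi$, and in particular fixing whether the universal factor reads $\frac{v}{v-u}$ or $\frac{u}{v-u}$ once the constant $\Omega$ is reallocated. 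Once the normalisation is settled, the remainder is the routine computation already performed in the analogous cases.
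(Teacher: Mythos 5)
Your plan follows exactly the route the paper intends (the paper omits this proof, declaring it analogous to Theorem~\ref{sol_II} and to Section~2), and its first two steps are sound: the preceding Proposition reduces $W$ to a Lagrangian subalgebra $\bar{W}\subseteq\mathfrak{g}\oplus\mathfrak{g}$ transversal to $\mathrm{diag}(\mathfrak{g})$; such $\bar{W}$ correspond to constant solutions of $r+r^{21}=\Omega$, $\mathrm{CYB}(r)=0$ by \cite{RS,S1}; and for $k\geq 1$ the $Q_{a(u)}$-dual of $e_{\alpha}u^{k}$ is indeed $e_{-\alpha}v^{-k}\in u^{-1}\mathfrak{g}[u^{-1}]\subseteq W$, so the non-constant contribution is $\sum_{k\geq1}(u/v)^{k}\Omega=\frac{u}{v-u}\Omega$, independently of $\bar{W}$.

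The gap is the final ``reallocation'' that you flag but postpone, and it cannot be closed in the form you assert. The $k=0$ duals are the constant lifts of the $\bar{Q}$-dual basis of $\bar{W}$, and their projected contribution is a constant tensor $c$ satisfying $c+c^{21}=\Omega$ and $\mathrm{CYB}(c)=0$; for your model subalgebra $\bar{W_0}$ one finds $c=r_{\mathrm{DJ}}$ (the duals of $e_{\alpha}$ and $e_{-\alpha}$ are $(e_{-\alpha},0)$ and $(0,-e_{\alpha})$, the latter projecting to $0$, which is the asymmetry you anticipated). Hence the honest output of your own computation is
\[
r(u,v)=\frac{u}{v-u}\Omega+c=\frac{v}{v-u}\Omega+(c-\Omega),
\qquad (c-\Omega)+(c-\Omega)^{21}=-\Omega,
\]
so rewriting over the prefactor $\frac{v}{v-u}$ necessarily produces a constant part with symmetrization $-\Omega$, never $+\Omega$. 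In other words, what your argument proves is $r(u,v)=\frac{v}{v-u}\Omega-r$ with $r+r^{21}=\Omega$ and $\mathrm{CYB}(r)=0$ (note the minus sign, exactly parallel to the paper's theorem for $a(u)=1/(1-u)$ in Section~2), equivalently $\frac{v}{v-u}\Omega+r$ with $r+r^{21}=-\Omega$. The statement as printed, with $+r$ and $r+r^{21}=+\Omega$, cannot hold: it violates the skew-symmetry $r(u,v)+r^{21}(v,u)=0$ forced by skewness of the cobracket (the printed form gives $2\Omega$), and a direct check using invariance of $\Omega$ shows that $\frac{v}{v-u}\Omega+r$ satisfies $\mathrm{CYB}=0$ precisely when $r+r^{21}=-\Omega$. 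So to finish you must carry out the $k=0$ bookkeeping as above and record the sign-corrected formula; assuming the pieces recombine into the printed one, as your proposal does, is exactly the step that fails.
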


\begin{rem}
$r$-matrices of the form $\frac{v}{v-u} \Omega+p(u,v)$, where $p(u,v)\in\mathfrak{g}[u,v]$, are called \textit{quasi-trigonometric} and were classified in 
\cite{KPSST, PS}. 
\end{rem}

 \section{Lie bialgebra structures on $\mathfrak{g}[u]$ in Case III}

Finally, let us treat the case $D_{\bar{\delta}}(\mathfrak{g}[[u]])=\mathfrak{g}((u))\oplus(\mathfrak{g}+\varepsilon\mathfrak{g})$.
with the nondegenerate bilinear form on the double given by the formula
\[Q_{a(u)}(f_1(u)+x_2+\varepsilon x_3,f_2(u)+y_2+\varepsilon y_3)=\mathrm{Res}_{u=0}(u^{-2}a(u)K(f_1(u),f_2(u))-\]
\[-K(x_3,y_2)-K(x_2,y_3),\]
for any $f_1(u),f_2(u)\in\mathfrak{g}((u))$ and $x_2,x_3,y_2,y_3\in\mathfrak{g}$.

According to \cite{SZ}, the following result holds:
\begin{prop}
There exists a one-to-one correspondence between Lie bialgebra structures 
$\delta$ on $\mathfrak{g}[u]$ satisfying  
$D_{\bar{\delta}}(\mathfrak{g}[[u]])=\mathfrak{g}((u))\oplus(\mathfrak{g}+\varepsilon\mathfrak{g})$ and 
bounded Lagrangian subalgebras $W$ of $\mathfrak{g}((u))\oplus(\mathfrak{g}+\varepsilon\mathfrak{g})$, with respect to 
the nondegenerate bilinear form $Q_{a(u)}$, and transversal to $\mathfrak{g}[[u]]$.
\end{prop}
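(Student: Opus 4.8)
The plan is to deduce this correspondence from the general theory of Drinfeld doubles, following the same strategy that underlies the analogous propositions in Cases I and II. First I would recall that any Lie bialgebra structure $\delta$ on $\mathfrak{g}[u]$ extends to $\bar\delta$ on $\mathfrak{g}[[u]]$, and that the associated double $D_{\bar\delta}(\mathfrak{g}[[u]])$ decomposes, as a vector space, into $\mathfrak{g}[[u]]$ together with a complementary copy of its restricted dual, both being Lagrangian subalgebras with respect to the canonical invariant form and transversal to one another. Under the standing assumption that $D_{\bar\delta}(\mathfrak{g}[[u]])$ is isomorphic to $\mathfrak{g}((u))\oplus(\mathfrak{g}+\varepsilon\mathfrak{g})$ equipped with $Q_{a(u)}$, the image of the dual becomes a Lagrangian subalgebra $W$ transversal to $\mathfrak{g}[[u]]$. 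This produces one direction of the desired map, sending $\delta$ to $W$.

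Second, I would isolate the feature that distinguishes polynomial cobrackets from Taylor-series ones, namely that $\delta$ is genuinely defined on $\mathfrak{g}[u]$ and takes values in $\mathfrak{g}[u]\otimes\mathfrak{g}[u]$, and show this polynomiality is equivalent to boundedness of $W$. One direction follows by observing that a polynomial-valued cobracket forces the corresponding $r$-matrix to have degree in $u^{-1}$ bounded independently of its argument, which dualises to an inclusion $W\subseteq u^n\mathfrak{g}[u^{-1}]\oplus(\mathfrak{g}+\varepsilon\mathfrak{g})$ for some $n$. Conversely, boundedness yields the clean algebraic splitting $W\oplus\mathfrak{g}[u]=\mathfrak{g}[u,u^{-1}]\oplus(\mathfrak{g}+\varepsilon\mathfrak{g})$ at the level of polynomials, exactly as in the remarks following the earlier propositions.

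For the reverse map I would reconstruct $\delta$ from a bounded Lagrangian $W$. Using the splitting above, I choose the canonical basis $e_{\alpha,k}$ of $\mathfrak{g}[u]$ and the dual basis $w_{\alpha,k}$ of $W$ with respect to $Q_{a(u)}$, form $r(u,v)=\sum_{\alpha,k}e_{\alpha,k}\otimes w_{\alpha,k}$ after projecting the $W$-component onto $\mathfrak{g}[u,u^{-1}]$, and set $\delta(f)=[f\otimes 1+1\otimes f,r(u,v)]$. That $W$ is a subalgebra is what guarantees the co-Jacobi identity (equivalently, that $r$ solves the appropriate Yang--Baxter type equation), the Lagrangian condition guarantees skew-symmetry of $\delta$, and boundedness keeps the output inside $\mathfrak{g}[u]\otimes\mathfrak{g}[u]$. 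Finally I would verify that the two assignments are mutually inverse: rebuilding the double from $\delta$ recovers $W$ as the complementary Lagrangian, while the cobracket reconstructed from $W$ agrees with $\delta$ under the duality pairing.

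The main obstacle I anticipate is the equivalence between polynomiality of $\delta$ and boundedness of $W$, together with the bookkeeping forced by the nilpotent factor $\mathfrak{g}+\varepsilon\mathfrak{g}$. Because the form $Q_{a(u)}$ in this case pairs the Laurent part against itself through $u^{-2}a(u)$ while pairing the $\varepsilon$-components off-diagonally, one must check carefully that the projection onto $\mathfrak{g}[u,u^{-1}]$ used in building the $r$-matrix is compatible with transversality to $\mathfrak{g}[[u]]$ and disturbs neither the Lagrangian property nor the subalgebra property of $W$. Once this compatibility is in place, the remaining verifications are formally identical to those already carried out in the previous sections.
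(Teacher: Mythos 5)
A preliminary remark: the paper never proves this proposition --- it is stated with the attribution ``According to \cite{SZ}, the following result holds'', exactly like its analogues in Cases I and II --- so there is no in-paper argument to measure you against; your attempt has to be judged as a reconstruction of the Montaner--Stolin--Zelmanov argument that the paper imports. Your overall architecture is the expected one: $W$ is the image of the restricted dual of $\mathfrak{g}[[u]]$ inside the double, polynomiality of $\delta$ is matched with boundedness of $W$, and $\delta$ is recovered from a bounded $W$ through the splitting $W\oplus\mathfrak{g}[u]=\mathfrak{g}[u,u^{-1}]\oplus(\mathfrak{g}+\varepsilon\mathfrak{g})$, dual bases, and $r(u,v)$. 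The direction ``bounded $W$ implies polynomial cobracket'' is fine as you describe it.

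The genuine gap is the converse, which is the real content of the statement: you justify boundedness of $W$ by asserting that a polynomial-valued cobracket ``forces the corresponding $r$-matrix to have degree in $u^{-1}$ bounded independently of its argument, which dualises to an inclusion $W\subseteq u^n\mathfrak{g}[u^{-1}]\oplus(\mathfrak{g}+\varepsilon\mathfrak{g})$.'' This is circular. The $r$-matrix is $\sum_{\alpha,k}e_{\alpha,k}\otimes w_{\alpha,k}$ with $w_{\alpha,k}$ a dual basis of $W$, so ``the $r$-matrix has uniformly bounded degree in its second leg'' is not an intermediate fact you may appeal to: it is verbatim the boundedness of $W$ that you are trying to prove (note also that boundedness concerns the powers of $u$, not of $u^{-1}$; elements of $W$ necessarily have poles of arbitrarily high order). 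A real proof must extract a uniform degree bound from $\delta$ itself --- for instance using that $\delta$, being a $1$-cocycle, is determined by its values on the generators $\mathfrak{g}$ and $\mathfrak{g}u$ of $\mathfrak{g}[u]$, which lie in $\mathfrak{g}[u]\otimes\mathfrak{g}[u]$ and hence have some fixed maximal degree --- and then run the order-theoretic argument of \cite{SZ}, \cite{S2} showing that $W\cdot\mathbb{C}[[u^{-1}]]$ is an order in $\mathfrak{g}((u^{-1}))$; boundedness is a consequence of that analysis, not a starting point. A second, smaller omission: in Case III the copy of $\mathfrak{g}[[u]]$ inside $\mathfrak{g}((u))\oplus(\mathfrak{g}+\varepsilon\mathfrak{g})$ is not the first-factor inclusion (that subspace is not isotropic, since the kernel $u^{-2}a(u)$ pairs the degree-$0$ and degree-$1$ parts of Taylor series nontrivially); it is a twisted embedding, for $a(u)=1$ of the form $f\mapsto\bigl(f,\,f(0)+\varepsilon f'(0)\bigr)$. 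All of your transversality and splitting statements, as well as the duality computation behind $r(u,v)$, must be carried out relative to this embedding; the ``compatibility of the projection onto $\mathfrak{g}[u,u^{-1}]$'' that you flag is exactly where this enters, and it is left unaddressed.
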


\begin{rem}
If $W$ is Lagrangian in $\mathfrak{g}((u))\oplus(\mathfrak{g}+\varepsilon\mathfrak{g})$ and transversal to $\mathfrak{g}[[u]]$, then 
$W\oplus\mathfrak{g}[u]=\mathfrak{g}[u,u^{-1}]\oplus((\mathfrak{g}+\varepsilon\mathfrak{g})$. The corresponding $r$-matrix can be found by choosing dual bases in $W$ and $\mathfrak{g}[u]$ respectively and projecting onto 
$\mathfrak{g}[u,u^{-1}]$. 

\end{rem}
Recall that any $\sigma(u)\in \mathrm{Ad}(\mathfrak{g}[u])$  induces an automorphism $\sigma(0)\in\mathrm{Ad}(\mathfrak{g})$, which in turn gives an well-defined automorphism $\bar{\sigma}(0)$ of $\mathfrak{g}+\varepsilon\mathfrak{g}$ via $\bar{\sigma}(0)(x+\varepsilon y)=\sigma(0)(x)+\varepsilon\sigma(0)(y)$. Then $\tilde{\sigma}(u)=\sigma(u)\oplus \bar{\sigma}(0)$ is an automorphism of $\mathfrak{g}((u))\oplus (\mathfrak{g}+\varepsilon\mathfrak{g})$. 
The following proposition can be proved similarly to Proposition \ref{max_ord}: 
\begin{prop}
Suppose that $W$ is a bounded Lagrangian subalgebra of $\mathfrak{g}((u))\oplus(\mathfrak{g}+\varepsilon\mathfrak{g})$, with respect to $Q_{a(u)}$ and transversal to $\mathfrak{g}[[u]]$. 
Then there exists $\sigma\in\mathrm{Ad}_{\mathbb{C}[u]}(\mathfrak{g}[u])$ such that $\tilde{\sigma}(u)(W)\subseteq (\mathbb{O}_{\alpha}\cap \mathfrak{g}[u,u^{-1}])\oplus(\mathfrak{g}+\varepsilon\mathfrak{g})$, where $\alpha$ is either a simple root or $-\alpha_{\max}$. 

\end{prop}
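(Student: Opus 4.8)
The plan is to mirror the argument of Proposition \ref{max_ord}, carrying the finite-dimensional summand $\mathfrak{g}+\varepsilon\mathfrak{g}$ along at every step. First I would use boundedness of $W$ to place it inside a manageable space: the projection of $W$ onto the $\mathfrak{g}((u))$ factor is bounded, hence lies in $\mathfrak{g}[u,u^{-1}]$, while the $\mathfrak{g}+\varepsilon\mathfrak{g}$ factor is finite-dimensional to begin with. Thus $W\subseteq\mathfrak{g}[u,u^{-1}]\oplus(\mathfrak{g}+\varepsilon\mathfrak{g})$.

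Next I would invoke the result of \cite{SZ} recalled in the introduction for Case III: there exists $\sigma(u)\in\mathrm{Ad}(\mathfrak{g}[u])$ such that, with $\tilde{\sigma}(u)=\sigma(u)\oplus\bar{\sigma}(0)$, one has $\tilde{\sigma}(u)(W\cdot\mathbb{C}[[u^{-1}]])\subseteq\mathbb{O}_{\alpha}\oplus(\mathfrak{g}+\varepsilon\mathfrak{g})$, where $\alpha$ is either a simple root or $-\alpha_{\max}$.

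The key observation is that $\tilde{\sigma}(u)$ stabilizes $\mathfrak{g}[u,u^{-1}]\oplus(\mathfrak{g}+\varepsilon\mathfrak{g})$, since $\sigma(u)\in\mathrm{Aut}_{\mathbb{C}[u]}(\mathfrak{g}[u])$ satisfies $\sigma(u)(\mathfrak{g}[u,u^{-1}])=\mathfrak{g}[u,u^{-1}]$ and $\bar{\sigma}(0)$ is an automorphism of $\mathfrak{g}+\varepsilon\mathfrak{g}$. Since $W\subseteq W\cdot\mathbb{C}[[u^{-1}]]$ and $W\subseteq\mathfrak{g}[u,u^{-1}]\oplus(\mathfrak{g}+\varepsilon\mathfrak{g})$, combining the two inclusions yields
\[\tilde{\sigma}(u)(W)\subseteq\bigl(\mathbb{O}_{\alpha}\oplus(\mathfrak{g}+\varepsilon\mathfrak{g})\bigr)\cap\bigl(\mathfrak{g}[u,u^{-1}]\oplus(\mathfrak{g}+\varepsilon\mathfrak{g})\bigr)=(\mathbb{O}_{\alpha}\cap\mathfrak{g}[u,u^{-1}])\oplus(\mathfrak{g}+\varepsilon\mathfrak{g}),\]
where the last equality holds because the intersection distributes over the direct sum whose two summands sit in disjoint factors. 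This is exactly the claimed inclusion.

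The main obstacle I anticipate is making precise the meaning of $W\cdot\mathbb{C}[[u^{-1}]]$ when $W$ meets the nilpotent summand $\mathfrak{g}+\varepsilon\mathfrak{g}$, and checking that the order construction and the embedding of \cite{SZ} act only on the $\mathfrak{g}((u))$ component while leaving $\mathfrak{g}+\varepsilon\mathfrak{g}$ untouched. Once this bookkeeping is settled, so that the final intersection genuinely splits as a direct sum, the argument is formally identical to that of Proposition \ref{max_ord}.
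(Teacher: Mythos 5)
Your proof is correct and is essentially the paper's own argument: the paper omits the proof of this proposition, stating only that it ``can be proved similarly to Proposition \ref{max_ord}'', and your argument is exactly that adaptation --- boundedness gives $W\subseteq\mathfrak{g}[u,u^{-1}]\oplus(\mathfrak{g}+\varepsilon\mathfrak{g})$, the cited result of Montaner--Stolin--Zelmanov gives $\tilde{\sigma}(u)(W\cdot\mathbb{C}[[u^{-1}]])\subseteq\mathbb{O}_{\alpha}\oplus(\mathfrak{g}+\varepsilon\mathfrak{g})$, and invariance of $\mathfrak{g}[u,u^{-1}]\oplus(\mathfrak{g}+\varepsilon\mathfrak{g})$ under $\tilde{\sigma}(u)$ lets you intersect the two inclusions. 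Your final bookkeeping step, that the intersection splits as $(\mathbb{O}_{\alpha}\cap\mathfrak{g}[u,u^{-1}])\oplus(\mathfrak{g}+\varepsilon\mathfrak{g})$ because the two summands live in different factors of the ambient double, is the only detail beyond the Case I proof, and you have handled it correctly.
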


Such subalgebras exist only when $\alpha$ has coefficient one in the decomposition of $\alpha_{\max}$ or is  $-\alpha_{\max}$. Moreover $a(u)$ should be a constant. Without loss of generality, one may assume that $a(u)=1$.

\begin{prop}
Let $a(u)=1$. There exists a one-to-one correspondence between Lagrangian subalgebras $W$ of $\mathfrak{g}((u))\oplus(\mathfrak{g}+\varepsilon\mathfrak{g})$, with respect to $Q_{a(u)}$, which are transversal to $\mathfrak{g}[[u]]$ and satisfy 
$W \subseteq \mathfrak{g}[u^{-1}]\oplus(\mathfrak{g}+\varepsilon\mathfrak{g})$, and Lagrangian subalgebras in $\mathfrak{g}+\varepsilon\mathfrak{g}$ transversal to $\mathfrak{g}$. 
\end{prop}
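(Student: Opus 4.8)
The statement to prove is a one-to-one correspondence, in the case $a(u)=1$, between Lagrangian subalgebras $W$ of $\mathfrak{g}((u))\oplus(\mathfrak{g}+\varepsilon\mathfrak{g})$ (with respect to $Q_{a(u)}$) that are transversal to $\mathfrak{g}[[u]]$ and satisfy $W\subseteq\mathfrak{g}[u^{-1}]\oplus(\mathfrak{g}+\varepsilon\mathfrak{g})$, and Lagrangian subalgebras of $\mathfrak{g}+\varepsilon\mathfrak{g}$ (with respect to $\bar{Q}_\varepsilon$) transversal to $\mathfrak{g}$. The guiding principle is exactly the one used repeatedly in Sections 2 and 3: pass to a finite-dimensional quotient by the perpendicular of the ambient order, check that the induced bilinear form on the quotient is the relevant finite-dimensional form, and verify that the passage preserves all the structure (subalgebra, Lagrangian, transversality) in both directions.

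\textbf{Step 1: identify the perpendicular and the quotient.} First I would observe that since $W$ is Lagrangian and $W\subseteq\mathfrak{g}[u^{-1}]\oplus(\mathfrak{g}+\varepsilon\mathfrak{g})$, transversality forces $W\supseteq(\mathfrak{g}[u^{-1}]\oplus(\mathfrak{g}+\varepsilon\mathfrak{g}))^{\perp}$. Using the explicit form $Q_{a(u)}$ with $a(u)=1$, whose radial part is $\mathrm{Res}_{u=0}(u^{-2}K(f_1(u),f_2(u)))$, the pairing couples $u^{-k}$ against $u^{k+1}$, so the perpendicular of $\mathfrak{g}[u^{-1}]$ inside $\mathfrak{g}((u))$ is $u^{-2}\mathfrak{g}[u^{-1}]$; the $\mathfrak{g}+\varepsilon\mathfrak{g}$ summand is its own radical complement under $\bar{Q}_\varepsilon$. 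Thus $W$ contains $u^{-2}\mathfrak{g}[u^{-1}]$ and one studies the image $\bar{W}$ of $W$ in the finite-dimensional quotient $\frac{\mathfrak{g}[u^{-1}]}{u^{-2}\mathfrak{g}[u^{-1}]}\oplus(\mathfrak{g}+\varepsilon\mathfrak{g})$.

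\textbf{Step 2: realize the quotient as $\mathfrak{g}+\varepsilon\mathfrak{g}$ with the correct form.} The key construction is the Lie-algebra epimorphism $\psi:\mathfrak{g}[u^{-1}]\to\mathfrak{g}+\varepsilon\mathfrak{g}$ with $\psi(x)=x$ and $\psi(xu^{-1})=\varepsilon x$, whose kernel is $u^{-2}\mathfrak{g}[u^{-1}]$; combined with the contribution of the original $\mathfrak{g}+\varepsilon\mathfrak{g}$ summand of the double, the quotient of the whole ambient order is identified with two copies, but the diagonal subtraction built into $Q_{a(u)}$ (the $-K(x_3,y_2)-K(x_2,y_3)$ terms) is designed to collapse this so that $W$ projects to a single $\mathfrak{g}+\varepsilon\mathfrak{g}$. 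I would verify, by a direct residue computation on the generators $x,xu^{-1}$, that under this identification $Q_{a(u)}$ descends precisely to $\bar{Q}_\varepsilon(x_1+\varepsilon x_2,y_1+\varepsilon y_2)=K(x_1,y_2)+K(x_2,y_1)$, so that $\bar{W}$ is Lagrangian in $(\mathfrak{g}+\varepsilon\mathfrak{g},\bar{Q}_\varepsilon)$. Transversality of $W$ to $\mathfrak{g}[[u]]$ translates, since $\mathfrak{g}[[u]]\cap\mathfrak{g}[u^{-1}]=\mathfrak{g}$ projects onto the copy $\mathfrak{g}\subseteq\mathfrak{g}+\varepsilon\mathfrak{g}$, into transversality of $\bar{W}$ to $\mathfrak{g}$.

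\textbf{Step 3: bijectivity.} Finally I would check the correspondence $W\mapsto\bar{W}$ is a bijection. Surjectivity: given any Lagrangian $\bar{W}\subseteq\mathfrak{g}+\varepsilon\mathfrak{g}$ transversal to $\mathfrak{g}$, set $W:=\psi^{-1}(\bar{W})$ (pulled back through the quotient map, automatically containing $u^{-2}\mathfrak{g}[u^{-1}]$); dimension-count and the descended-form computation guarantee it is Lagrangian and transversal. Injectivity is immediate since any such $W$ contains the full perpendicular $u^{-2}\mathfrak{g}[u^{-1}]$ and is therefore recovered from its image. \textbf{The main obstacle} I anticipate is Step 2: one must be careful that the $\varepsilon$-grading interacts correctly with the residue pairing and with the extra $\mathfrak{g}+\varepsilon\mathfrak{g}$ summand of the double, so that the descended form is exactly $\bar{Q}_\varepsilon$ and the two nilpotent directions are matched up rather than doubled. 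The Lie-bracket compatibility of $\psi$ (that $\psi(xu^{-1})=\varepsilon x$ respects brackets, using $\varepsilon^2=0$) is the structural fact that makes the nilpotent $\varepsilon\mathfrak{g}$ appear, and it is where the computation genuinely differs from the Case~II propositions.
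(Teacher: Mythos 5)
Your overall strategy (pass to the quotient by the perpendicular of the ambient order) is the right one and is indeed what the paper does, but your Step 1 computes that perpendicular incorrectly, and the error breaks Steps 2 and 3. In Case III the residue density is $u^{-2}a(u)$, so with $a(u)=1$ the pairing couples $u^{m}$ with $u^{n}$ exactly when $m+n=1$; as you yourself note, $u^{-k}$ couples with $u^{k+1}$. Since every exponent occurring in $\mathfrak{g}[u^{-1}]$ is $\leq 0$, no two such exponents can sum to $1$, so $\mathfrak{g}[u^{-1}]$ is isotropic --- in fact self-perpendicular --- inside $\mathfrak{g}((u))$ for this form. Your conclusion that $\mathfrak{g}[u^{-1}]^{\perp}=u^{-2}\mathfrak{g}[u^{-1}]$ is the Case I computation (density $a(u)$, without the factor $u^{-2}$) and contradicts your own observation about the coupling. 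The correct perpendicular of the ambient order $\mathfrak{g}[u^{-1}]\oplus(\mathfrak{g}+\varepsilon\mathfrak{g})$ is $\mathfrak{g}[u^{-1}]$ itself, so a Lagrangian $W$ contained in that order must contain all of $\mathfrak{g}[u^{-1}]$, not merely $u^{-2}\mathfrak{g}[u^{-1}]$.

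Consequently Step 2 cannot be repaired as stated: $Q_{a(u)}$ vanishes identically on $\mathfrak{g}[u^{-1}]$, so no residue computation can make it descend through $\psi(x)=x$, $\psi(xu^{-1})=\varepsilon x$ to the nondegenerate form $\bar{Q}_{\varepsilon}$; the form induced on your quotient $\frac{\mathfrak{g}[u^{-1}]}{u^{-2}\mathfrak{g}[u^{-1}]}\oplus(\mathfrak{g}+\varepsilon\mathfrak{g})$ is degenerate, with radical exactly the first summand. Step 3 inherits the problem: your pullback $\psi^{-1}(\bar{W})$ lies inside $\mathfrak{g}[u^{-1}]\oplus 0$, misses the second summand of the double, and is never transversal to $\mathfrak{g}[[u]]$. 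The paper's proof --- and the point where Case III genuinely differs from Cases I and II --- is that the quotient by the perpendicular is simply the second summand of the double: $W\supseteq\mathfrak{g}[u^{-1}]$, and $W/\mathfrak{g}[u^{-1}]$ is a Lagrangian subalgebra of $(\mathfrak{g}+\varepsilon\mathfrak{g},\bar{Q}_{\varepsilon})$ transversal to $\mathfrak{g}$, equivalently $W=\mathfrak{g}[u^{-1}]\oplus\bar{W}$; the nilpotent copy $\varepsilon\mathfrak{g}$ comes from the summand $\mathfrak{g}+\varepsilon\mathfrak{g}$ of the double itself, not from the $u^{-1}$-coefficients of $\mathfrak{g}[u^{-1}]$. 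Quotienting your degenerate quotient further by its radical would recover exactly this picture, but as written your identification of $\varepsilon\mathfrak{g}$ with the $u^{-1}$-coefficients is wrong, and this is precisely the ``main obstacle'' your own plan flagged.
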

\begin{proof}
Being isotropic, $W$ must include $(\mathfrak{g}+\varepsilon\mathfrak{g})^{\perp}=\mathfrak{g}[u^{-1}]$. The quotient $W/\mathfrak{g}[u^{-1}]$ is a Lagrangian subalgebra in $\mathfrak{g}+\varepsilon\mathfrak{g}$ complementary to $\mathfrak{g}$. 
\end{proof}
Consequently, we obtain the description of the corresponding $r$-matrices:
\begin{thm}
Let $a(u)=1$ and $r(u,v)$ be an $r$-matrix which corresponds to a Lagrangian subalgebra $W$ of $\mathfrak{g}((u))\oplus(\mathfrak{g}+\varepsilon\mathfrak{g})$,
with respect to the form 
$Q_{a(u)}$, such that $W \subseteq \mathfrak{g}[u^{-1}]\oplus(\mathfrak{g}+\varepsilon\mathfrak{g})$. Then
\[r(u,v)= \frac{uv}{v-u} \Omega+r,\]
where $r\in\mathfrak{g}\wedge\mathfrak{g}$ verifies $\mathrm{CYB}(r)=0$. 

\end{thm}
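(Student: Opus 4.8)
The plan is to follow the pattern established in Sections 2 and 3: reduce the infinite-dimensional problem to a finite-dimensional one, invoke the known classification of Lagrangian subalgebras of $\mathfrak{g}+\varepsilon\mathfrak{g}$, and then pin down the $u,v$-dependent part by an explicit base-point computation.

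First I would use the preceding Proposition to reduce $W$ to a finite-dimensional datum. Since $W$ is Lagrangian in $\mathfrak{g}((u))\oplus(\mathfrak{g}+\varepsilon\mathfrak{g})$, transversal to $\mathfrak{g}[[u]]$, and contained in $\mathfrak{g}[u^{-1}]\oplus(\mathfrak{g}+\varepsilon\mathfrak{g})$, it contains $(\mathfrak{g}+\varepsilon\mathfrak{g})^{\perp}=\mathfrak{g}[u^{-1}]$ and is uniquely determined by the Lagrangian subalgebra $\bar{W}=W/\mathfrak{g}[u^{-1}]$ of $\mathfrak{g}+\varepsilon\mathfrak{g}$ transversal to $\mathfrak{g}$. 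By Stolin's correspondence (\cite{S1}), exactly as in the case $a(u)=1/(1-u)^2$ treated in Section 2, such subalgebras $\bar{W}$ are in bijection with skew-symmetric solutions $r\in\mathfrak{g}\wedge\mathfrak{g}$ of $\mathrm{CYB}(r)=0$. It therefore remains only to express $r(u,v)$ through $r$.

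Next I would observe that the non-constant part of $r(u,v)$ is universal. Two admissible Lagrangian subalgebras $W,W'$ share the lattice $\mathfrak{g}[u^{-1}]$ and differ only through their finite-dimensional images $\bar{W},\bar{W}'$; tracing this through the dual-basis construction of the Remark shows that $r(u,v)-r'(u,v)$ is the constant matrix $r-r'$ attached to $\bar{W},\bar{W}'$ by Stolin's correspondence. Hence the $u,v$-dependent part of $r(u,v)$ does not depend on $W$, and it suffices to evaluate it on one convenient base-point, writing $r(u,v)=r_{0}(u,v)+r$. For the base-point I would take $\bar{W}_{0}=\varepsilon\mathfrak{g}$ (which corresponds to $r=0$), lift it to $W_{0}=\mathfrak{g}[u^{-1}]\oplus\varepsilon\mathfrak{g}$, and compute the $r$-matrix from dual bases with respect to $Q_{a(u)}$. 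The crucial feature of Case III is the weight $u^{-2}$ in the residue form: pairing $e_{\alpha}u^{k}$ against $e_{-\alpha}v^{m}$ gives $\mathrm{Res}_{t=0}(t^{-2}t^{k+m})=\delta_{m,1-k}$, so the dual of $e_{\alpha}u^{k}$ is supported in degree $v^{1-k}$. For $k\geq 1$ this lies in $\mathfrak{g}[v^{-1}]\subseteq W_{0}$, but for $k=0$ the naive dual would be $v^{1}\notin\mathfrak{g}[v^{-1}]$; instead, since $\mathfrak{g}[u]$ is embedded in the double via $f\mapsto f(u)+f(0)+\varepsilon f'(0)$, the degree-zero generator acquires a component in the $\mathfrak{g}$-block and its dual is supplied by the $\varepsilon\mathfrak{g}$-summand of $W_{0}$. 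After projecting onto $\mathfrak{g}[u,u^{-1}]$ this $k=0$ contribution vanishes, so only the terms with $k\geq 1$ survive, and
\[r_{0}(u,v)=\Omega\sum_{k\geq 1}u^{k}v^{1-k}=\frac{uv}{v-u}\,\Omega.\]

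The main obstacle is this base-point computation — more precisely, correctly tracking the embedding $f\mapsto f(u)+f(0)+\varepsilon f'(0)$ together with the block pairings, so as to confirm that the dual of the degree-zero generators lands entirely in $\varepsilon\mathfrak{g}$ and is therefore removed by the projection. This is exactly the step that distinguishes the numerator $uv$ from the naive $v^{2}$ one would obtain by summing over all $k\geq 0$, since $\frac{v^{2}}{v-u}-\frac{uv}{v-u}=v$ is precisely the dropped $k=0$ term. Combining the three steps yields $r(u,v)=\frac{uv}{v-u}\,\Omega+r$ with $r\in\mathfrak{g}\wedge\mathfrak{g}$ and $\mathrm{CYB}(r)=0$, as claimed.
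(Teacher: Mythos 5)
Your proof is correct and follows essentially the route the paper intends: the paper in fact omits the proof of this theorem (stating it follows ``consequently'' from the preceding proposition, in the same pattern as Section 2), namely reduction of $W$ modulo $\mathfrak{g}[u^{-1}]$ to a Lagrangian subalgebra of $\mathfrak{g}+\varepsilon\mathfrak{g}$ transversal to $\mathfrak{g}$, Stolin's correspondence between such subalgebras and skew-symmetric solutions of $\mathrm{CYB}(r)=0$, and a dual-basis computation at a base point fixing the non-constant part. Your extra care with the embedding $f\mapsto f(u)+f(0)+\varepsilon f'(0)$ and with the projection onto $\mathfrak{g}[u,u^{-1}]$ killing the $\varepsilon\mathfrak{g}$-valued dual of the degree-zero generators --- precisely the step that yields the numerator $uv$ rather than $v^{2}$ --- correctly supplies the detail the paper leaves implicit.
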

\begin{rem}
$r$-matrices of the form $\frac{uv}{v-u} \Omega+p(u,v)$, where
$p(u,v)\in \mathfrak{g}[u,v]$, are called \textit{quasi-rational} and were
studied in \cite{SY}.
\end{rem}

\section{Quasi-twist equivalence between Lie bialgebra structures}

We first remind the reader the notion of twist equivalence between Lie bialgebra structures, according to \cite{KS}:
\begin{defn}
Two Lie bialgebra structures $\delta_1$ and $\delta_2$ on a Lie algebra $L$ are 
called \emph{twist-equivalent} is there exists a Lie algebra isomorphism 
$f: D_{\delta_1}(L)\longrightarrow D_{\delta_2}(L)$ satisfying the following properties:

(1) $Q_1(x,y)=Q_2(f(x),f(y))$, for any $x, y\in D_{\delta_1}(L)$, where $Q_i$ denotes the canonical form on $D_{\delta_i}(L)$, $i=1,2$. 

(2) $f\circ j_1=j_2$, where $j_i$ is the canonical embedding of $L$ into $D_{\delta_i}(L)$.
\end{defn}

Secondly, let us also recall the notion of quantum twisting for
Hopf algebras (see \cite{ES}). Let $A:=A(m,\Delta,\epsilon,S$) be a Hopf algebra with multiplication $m:A\otimes A\rightarrow A$, coproduct $\Delta:A \rightarrow A\otimes A$, counit $\epsilon:A\rightarrow\mathbb C$, and antipode $S:A\to A$. 

An invertible element $F\in A\otimes A$,
$F=\sum_i f^{(1)}_i\otimes f^{(2)}_i$ is called a \emph{quantum twist} if it satisfies
the cocycle equation
\[
F^{12}(\Delta\otimes{\rm id})(F)=F^{23}({\rm id}\otimes\Delta)(F)\,,
\]
and the "unital" normalization condition
\[
(\epsilon \otimes{\rm id})(F)=({\rm id}\otimes\epsilon )(F)=1\,.
\]

One can now define a twisted Hopf algebra
$A^{(F)}:=A^{(F)}(m,\Delta^{(F)},\epsilon,S^{(F)}$) which has the same multiplication $m$
and the counit mapping $\epsilon$ but the twisted coproduct and antipode
\[
\Delta^{(F)}(a)=F\Delta(a)F^{-1},\quad\;S^{(F)}(a)=u\,S(a)u^{-1}, \quad\;u= \sum_i
f^{(1)}_{i}S(f^{(2)}_i).\]

Regarding the quantization of twist-equivalent Lie bialgebra structures, the following result was proved by Halbout in \cite{H}: 

Suppose $\delta_1$ and $\delta_2$ are twist-equivalent and let $(A_1,\Delta_1)$ be a quantization of $(L, \delta_1)$. Then there exists a quantization 
$(A_2,\Delta_2)$ of $(L, \delta_2)$ such that $A_2$ is obtained from $A_1$ via a quantum twist. 

\begin{rem}
Let $\delta_1$ and $\delta_2$ be two Lie bialgebra structures on $L$. 
Assume that there exists an automorphism $\sigma$ of $L$ such that, 
for any $a\in L$, $\delta_2(a)=(\sigma\otimes \sigma)(\delta_1(\sigma^{-1}(a)))$.  Then $\sigma$ is not necessarily extendable to an isomorphism $\bar{\sigma}: D_{\delta_1}(L)\longrightarrow D_{\delta_2}(L)$. In fact, it might happen that the doubles are not isomorphic even as Lie algebras. Let us consider an example. Recall that the $r$-matrix 

\[r(u,v)=\frac{1-uv}{v-u}\Omega+\sum_{\alpha>0} e_{\alpha}\wedge e_{-\alpha}\] 
induces a Lie bialgebra structure on $\mathfrak{g}[u]$ for which the classical double is $(\mathfrak{g}[u,u^{-1}],Q_{1/(1-u^2)})$. 

Let us make the change of variable $2u_1=u+1$, $2v_1=v+1$, which is an automorphism of $\mathfrak{g}[u]$. Then 
\[r(u,v)=2(\frac{u_1(1-v_1)}{v_1-u_1}\Omega+r_{DJ}),\] 
where $r_{DJ}=(r_0+\Omega)/2$. One can notice that 
$r(u,v)$ is proportional to the solution obtained in Theorem \ref{sol_II}. However, this 
solution corresponds to a Lie bialgebra structure for which the double is 
$\mathfrak{g}[u,u^{-1}]\oplus\mathfrak{g}$. Obviously the Lie algebras $\mathfrak{g}[u,u^{-1}]$ and $\mathfrak{g}[u,u^{-1}]\oplus\mathfrak{g}$ are not isomorphic.
\end{rem}

\begin{rem}
$\sigma:\mathfrak{g}[u]\longrightarrow\mathfrak{g}[u]$ given by $\sigma(u)=pu+q$
cannot be extended to $\mathfrak{g}[u,u^{-1}]$. 
\end{rem}

\begin{defn}
Two Lie bialgebra structures $\delta_1$ and $\delta_2$ on $\mathfrak{g}[u]$ are called \emph{quasi-twist equivalent} if $\delta_2(a)=(\sigma\otimes \sigma)(\delta_1(\sigma^{-1}(a)))$, for any $a\in \mathfrak{g}[u]$. Equivalently, the corresponding $r$-matrices satisfy the relation 
$r_2(u,v)=C\cdot r_1(\sigma(u),\sigma(v))$, for some constant $C$. 
\end{defn}
Recall that in Section 2 we studied the Lie bialgebra structures
on $\mathfrak{g}[u]$ whose double is $\mathfrak{g}[u,u^{-1}]$ endowed with
the form $Q_{a(u)}$, where $a(u)=1/(1-c_1u)(1-c_2u)$, for any non-zero constants $c_1\neq c_2$.

By Corollary \ref{sol_I}, the double $(\mathfrak{g}[u,u^{-1}],Q_{1/(1-c_1u)(1-c_2u)})$ leads to the following $r$-matrix: 
 
\[r_{c_1,c_2}(u,v)= \frac{1-(c_1+c_2)u+c_1c_2uv}{v-u}\Omega-r_{c_1,c_2},\]
where 
\[r_{c_1,c_2}=\sum_{\alpha>0}(c_1e_{-\alpha}\otimes e_{\alpha}+c_2e_{\alpha}\otimes e_{-\alpha}+\frac{c_1+c_2}{4}h_{\alpha}\otimes h_{\alpha}).\]

\begin{prop}
For any non-zero complex constants $c_1\neq c_2$, $d_1\neq d_2$, the Lie bialgebra structures with corresponding $r$-matrices $r_{c_1,c_2}(u,v)$ and $r_{d_1,d_2}(u,v)$, are quasi-twist equivalent.
\end{prop}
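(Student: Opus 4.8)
The plan is to realize the equivalence directly at the level of $r$-matrices: I want to produce an affine change of variable $\sigma(u)=pu+q$ (which the preceding remarks identify as the relevant kind of automorphism of $\mathfrak{g}[u]$) together with a scalar $C$ such that $r_{d_1,d_2}(u,v)=C\,r_{c_1,c_2}(\sigma(u),\sigma(v))$, which is exactly the stated equivalent condition for quasi-twist equivalence. The observation that makes the whole computation transparent is that the rational coefficient of $\Omega$ factors. Starting from the rewritten form of Section~2,
\[
r_{c_1,c_2}(u,v)=\frac{(1-c_2u)(1-c_1v)}{v-u}\,\Omega+(c_1-c_2)\,r_{\mathrm{DJ}},
\]
which is valid because $1-c_1v-c_2u+c_1c_2uv=(1-c_2u)(1-c_1v)$, everything reduces to tracking two linear factors and one constant.

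Next I would substitute $u\mapsto pu+q$ and $v\mapsto pv+q$. The denominator becomes $p(v-u)$, each linear factor transforms as $1-c_2(pu+q)=(1-c_2q)\bigl(1-\tfrac{c_2p}{1-c_2q}u\bigr)$ and similarly in $v$ (this requires $1-c_1q\neq0$ and $1-c_2q\neq0$), while the constant term $(c_1-c_2)r_{\mathrm{DJ}}$ is untouched by the substitution. Comparing $C\,r_{c_1,c_2}(\sigma(u),\sigma(v))$ with $r_{d_1,d_2}(u,v)=\frac{(1-d_2u)(1-d_1v)}{v-u}\Omega+(d_1-d_2)r_{\mathrm{DJ}}$ term by term then yields four scalar equations,
\[
\frac{c_2p}{1-c_2q}=d_2,\qquad \frac{c_1p}{1-c_1q}=d_1,\qquad C\,\frac{(1-c_2q)(1-c_1q)}{p}=1,\qquad C(c_1-c_2)=d_1-d_2.
\]

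The first two equations determine $q$ and $p$ uniquely; solving them gives $q=\frac{c_1d_2-c_2d_1}{c_1c_2(d_2-d_1)}$ and $p=\frac{d_1d_2(c_2-c_1)}{c_1c_2(d_2-d_1)}$, and using $c_1\neq c_2$, $d_1\neq d_2$ together with the non-vanishing of all four constants one checks $1-c_iq\neq0$ and $p\neq0$, so $\sigma$ is a genuine automorphism of $\mathfrak{g}[u]$ and the factorizations above are legitimate. I expect the heart of the argument to be the consistency of the \emph{overdetermined} system: there are four equations but only the three unknowns $p,q,C$, so one must verify that the value $C=\frac{d_1d_2}{c_1c_2p}$ forced by normalizing the $\Omega$-coefficient agrees with the value $C=\frac{d_1-d_2}{c_1-c_2}$ forced by the $r_{\mathrm{DJ}}$-coefficient. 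Substituting the explicit $p$ into $\frac{d_1d_2}{c_1c_2p}$ collapses it to $\frac{d_2-d_1}{c_2-c_1}=\frac{d_1-d_2}{c_1-c_2}$, so the two determinations coincide and all four equations hold simultaneously. This gives the desired $\sigma$ and $C$, proving the equivalence; everything apart from this single consistency check is routine bookkeeping.
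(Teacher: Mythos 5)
Your proof is correct and follows essentially the same route as the paper: both determine the affine substitution $\sigma(u)=pu+q$ from the equations $d_i=\frac{c_ip}{1-c_iq}$ and rescale by $C=\frac{p}{(1-c_1q)(1-c_2q)}=\frac{d_1-d_2}{c_1-c_2}$, the only difference being that you make the paper's ``straightforward computation'' explicit by working with the factorized form $\frac{(1-c_2u)(1-c_1v)}{v-u}\Omega+(c_1-c_2)r_{\mathrm{DJ}}$ and checking consistency of the overdetermined system.
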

\begin{proof}
Let us first notice that there exist unique $p$, $q$ such that $d_1=\frac{c_1p}{1-c_1q}$ and $d_2=\frac{c_2p}{1-c_2q}$. Since we also have 

\[r_{d_1,d_2}(u,v)= \frac{1-(d_1+d_2)u+d_1d_2uv}{v-u}\Omega-r_{d_1,d_2},\]
a straightforward computation gives 

\[r_{d_1,d_2}(u,v)(1-c_1q)(1-c_2q)=\frac{1-(c_1+c_2)(pu+q)+c_1c_2(pu+q)(pv+q)}{v-u}\Omega\]\[-p\cdot r_{c_1,c_2}.\]

Thus \[r_{d_1,d_2}(u,v)=\frac{p}{(1-c_1q)(1-c_2q)}\cdot r_{c_1,c_2}(pu+q,pv+q).\]

We have obtained that the Lie bialgebra structures on $\mathfrak{g}[u]$ corresponding to $r_{c_1,c_2}$ and $r_{d_1,d_2}$ are quasi-twist equivalent. 
\end{proof}

Concerning the quantization of the quasi-twist equivalent structures in the example above, the following conjecture was proposed by A. Stolin: 

\textbf{Conjecture.}
Quantizations of Lie bialgebra structures on $\mathfrak{g}[u]$ defined by 
$r_{c_1,c_2}(u,v)$ can be chosen isomorphic as quasi-Hopf algebras. 
 
Here we recall that two Hopf algebras $(A,\Delta_1)$ and $(A,\Delta_2)$ are isomorphic as quasi-Hopf algebras if there is an invertible element $F\in A\otimes A$ and some $A$-invariant element $\Phi$ of $A^{\otimes3}$ such that $F^{12}(\Delta\otimes{\rm id})(F)=F^{23}({\rm id}\otimes\Delta)(F)\cdot\Phi$ and $\Delta_2=F\Delta_1F^{-1}$. 

\vspace*{0.5cm}

\textbf{Acknowledgment.} The authors are thankful to Professor A. Stolin 
for fruitful discussions.

\bibliographystyle{amsalpha}

\end{document}